\newtheorem{theorem}{Theorem}[section]
\newtheorem{corollary}{Corollary}[section]
\newtheorem{lemma}[theorem]{Lemma}
\newtheorem{proposition}{Proposition}[section]
\newtheorem*{problem}{Problem}
\newtheorem{assumption}{Assumption}
\theoremstyle{definition}
\newtheorem{definition}[theorem]{Definition}
\newtheorem{remark}{Remark}[section]
\newtheorem{example}{Example}[section]
\newcommand{\Reach}{\operatorname{Reach}}
\newcommand{\state}{S}
\newcommand{\pref}[1]{\eqref{#1}} 
\renewcommand{\emptyset}{\varnothing}
\newcommand{\R}{\mathbb{R}}
\newcommand{\Cpt}[1]{{S \setminus #1}}
\newcommand{\clo}[1]{{\bar{#1}}}
\newcommand{\size}{\mathsf{size}}
\newcommand{\MIN}{\textsf{MIN}}
\newcommand{\MAX}{\textsf{MAX}}
\newcommand{\tail}{\mathbf{t}}
\newcommand{\head}{\mathbf{h}}
\DeclareMathOperator{\argmin}{arg\,min}
\DeclareMathOperator{\argmax}{arg\,max}
\newcommand{\F}{\mathcal{F}}
\newcommand{\A}{\mathcal{A}}
\newcommand{\B}{\mathcal{B}}
\newcommand{\NonTrivialFP}{\textbf{NonTrivialFP}}
\newcommand{\Ergodicity}{\textbf{Ergodicity}}
\newcommand{\MonBool}{\textbf{MonBool}}
\newcommand{\IMinJMax}{\textbf{IMinJMax}}
\newcommand{\IisMin}{\textbf{I$=$Min}}
\newcommand{\colorpayment}{red}
\newcommand{\zero}{0}
\newcommand{\unit}{\mathbf{1}}
\newcommand{\indic}{\mathbf{1}}
\title[Ergodicity conditions for zero-sum games] 
      {Ergodicity conditions for zero-sum games}
\author[M.~Akian and S.~Gaubert and A.~Hochart]{}
\subjclass{Primary: 47H25; Secondary: 91A20, 05C65, 06A15.}
 \keywords{Nonlinear ergodic theorem, mean ergodic theorem, semigroups of contractions, nonexpansive mappings, Poisson equation, zero-sum games, dynamic programming, Shapley operators, mean payoff, fixed point, Galois connections, directed hypergraphs.}
 \email{marianne.akian@inria.fr}
 \email{stephane.gaubert@inria.fr}
 \email{hochart@cmap.polytechnique.fr}
\thanks{The last author is supported by a PhD fellowship of Fondation Math\'ematique Jacques Hadamard (FMJH).
The first two authors are partially supported by the PGMO Program of FMJH and EDF, and by ANR (MALTHY Project, number ANR-13-INSE-0003).}
\thanks{Initial version: May 15, 2014. Revised \today.}
\begin{document}
\maketitle

\centerline{\scshape Marianne Akian and St\'ephane Gaubert and Antoine Hochart}
\medskip
{\footnotesize
 \centerline{INRIA
  and CMAP, Ecole polytechnique, CNRS}
   \centerline{CMAP, Ecole polytechnique, Route de Saclay}
   \centerline{91128 Palaiseau Cedex, France}
} 

\bigskip

\begin{abstract}
A basic question for zero-sum repeated games consists in determining whether the mean payoff per time unit is independent of the initial state.
In the special case of ``zero-player'' games, i.e., of Markov chains equipped with additive functionals, the answer is provided by the mean ergodic theorem.
We generalize this result to repeated games. We show that the mean payoff is independent of the initial state for all state-dependent perturbations of the rewards if and only if an ergodicity condition is verified. The latter is characterized by the uniqueness modulo constants of nonlinear harmonic functions (fixed points of the recession function associated to the Shapley operator), or, in the special case of stochastic games with finite action spaces and perfect information, by a reachability condition involving conjugate subsets of states in directed hypergraphs.  We show that the ergodicity condition for games
only depends on the support of the transition probability,
and that it can be checked in polynomial time when the number of states is fixed. 
\end{abstract}

\section{Introduction}

\subsection{Motivation and related work}
\label{subsec-motivation}

The ergodicity of dynamical systems or of stochastic processes can be considered in several guises. In the elementary case of a discrete time Markov chain $(\xi_k)_{k\geq 0}$
with finite state space $S=[n]:=\{1,\dots,n\}$, ergodicity can be classically defined by any of the equivalent properties 
listed in the following theorem. Note that these properties only involve
the transition probability matrix
$P=(P(\xi_{k+1}=j\mid \xi_k=i))_{i,j=1,\ldots, n} \in \R^{n\times n}$.

\begin{theorem}
  \label{thm:ErgodicityMarkovChain}
  Let $P\in \R^{n\times n}$ be a stochastic matrix.
  The following properties are equivalent.
  \begin{enumerate}
    \item\label{it-1} Every vector $\eta\in \R^n$ such that $P\eta = \eta$ is constant;
   \item For every vector $g\in \R^n$, the Cesaro limit
   \begin{align}
     \lim_{k\to\infty} k^{-1}(g+Pg+\dots+P^{k-1}g)
     \label{e-cesaroold}
   \end{align}
   is a constant vector;
    \item\label{it-2} For every vector $g\in \R^n$, the ergodic equation
   \begin{align}
     g+Pu = \lambda \unit + u \enspace,\label{e-ergodicmc}
   \end{align}
   where $\unit$ denotes the unit vector of $\R^n$,
   admits a solution $(\lambda,u)\in \R\times \R^n$;
   \item\label{prop-ergodic-graph} The directed graph associated to the matrix $P$ has only one final class;
   \item\label{it-last} The matrix $P$ has only one invariant measure, that is 
a stochastic row vector $m \in \R^{1 \times n}$ such that $m P = m$.
 \end{enumerate}
\end{theorem}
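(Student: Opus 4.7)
The plan is to split the five conditions into three groups---the analytic conditions (i)--(iii), the graph-theoretic condition (iv), and the dual condition (v)---and connect them via the mean ergodic theorem, potential theory for finite Markov chains, and a duality argument applied to $P^\top$.

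For (i) $\Longleftrightarrow$ (ii) $\Longleftrightarrow$ (iii), I would invoke the mean ergodic theorem for linear contractions. The Cesaro means $P_k := k^{-1}\sum_{i=0}^{k-1} P^i$ are bounded in operator norm (being convex combinations of powers of the non-expansive map $P$), and a Jordan-form argument---using that eigenvalues of $P$ on the unit circle have only trivial Jordan blocks because $P$ is non-expansive for $\|\cdot\|_\infty$---shows they converge to a projector $\Pi$ onto $F := \{\eta \in \R^n : P\eta = \eta\}$ along $\operatorname{range}(I - P)$. This yields the direct-sum decomposition $\R^n = F \oplus \operatorname{range}(I - P)$, with $\unit \in F$. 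Condition (ii) then reads $\operatorname{image}(\Pi) \subseteq \R\unit$, equivalent to $F = \R\unit$ since $\unit \in F$, i.e.\ (i). Condition (iii) requires that every $g$ decompose as $g = \lambda\unit + (I - P)u$, equivalent by the direct sum to $\R\unit + \operatorname{range}(I-P) = \R^n$, i.e.\ again $F \subseteq \R\unit$.

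For (i) $\Longleftrightarrow$ (iv), I would use potential theory. The restriction of $P$ to each final class $C$ is an irreducible stochastic matrix, and by the strong maximum principle any harmonic $\eta$ (satisfying $P\eta = \eta$) is constant on $C$. Denoting by $h_C(i)$ the probability that the chain starting at $i$ is eventually absorbed in $C$---well-defined because non-final states are transient in a finite chain---one obtains $\eta(i) = \sum_C h_C(i)\,\eta_C$ where $\eta_C$ is the common value of $\eta$ on $C$. Hence $\dim F$ equals the number of final classes, which gives (i) $\Leftrightarrow$ (iv). The equivalence (iv) $\Longleftrightarrow$ (v) then follows by duality applied to $P^\top$: the invariant probability measures form a simplex whose vertices are in bijection with the final classes, each vertex being the unique stationary distribution (via Perron--Frobenius) of the corresponding irreducible sub-chain.

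The main obstacle lies in the potential-theoretic step (i) $\Leftrightarrow$ (iv): one must justify that each $h_C$ is itself harmonic, that every harmonic function is determined by its values on the final classes, and that this produces $\dim F$ equal to the number of final classes. These facts ultimately rely on the almost-sure absorption of a finite Markov chain into the union of its recurrent classes. Everything else reduces to linear algebra once the ergodic projector $\Pi$ is in hand.
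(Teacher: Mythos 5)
Your proposal is correct, but there is nothing in the paper to compare it against: the authors do not prove Theorem~\ref{thm:ErgodicityMarkovChain} at all, instead citing Theorem~3.23 of \cite{bermanplemmons} for the equivalence \pref{prop-ergodic-graph}$\Leftrightarrow$\pref{it-last} and Theorem~6.1 of \cite{whittle} for the remaining equivalences. Your sketch therefore supplies a self-contained argument where the paper defers to the literature, and the argument is sound. The linear-algebraic core is clean: semisimplicity of the peripheral spectrum (forced by $\|P^k\|_\infty=1$) gives $\R^n=F\oplus\operatorname{range}(I-P)$ with Cesaro projector $\Pi$ onto $F$, and since $\unit\in F$, both \pref{it-2} (solvability of $g-\lambda\unit\in\operatorname{range}(I-P)$ for all $g$) and the constancy of $\Pi g$ reduce to $F=\R\unit$. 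The potential-theoretic step correctly identifies $\dim F$ with the number of final classes via the absorption probabilities $h_C$, which are harmonic and linearly independent because $h_C\equiv 1$ on $C$ and $\equiv 0$ on the other final classes; the identity $\eta_i=\sum_C h_C(i)\,\eta_C$ follows from $\eta_i=(P^k\eta)_i=\mathbb{E}_i[\eta(\xi_k)]$, almost-sure absorption, and constancy of $\eta$ on each final class (the maximum principle on an irreducible stochastic block). Two points you should make explicit if you write this out in full: first, the semisimplicity claim needs the one-line justification that a nontrivial Jordan block at a unimodular eigenvalue would make $\|P^k\|$ unbounded; second, for \pref{prop-ergodic-graph}$\Leftrightarrow$\pref{it-last} you need that every invariant measure vanishes on transient states, so that the invariant measures form the simplex spanned by the unique (Perron--Frobenius) stationary distributions of the final classes. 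With those remarks included, your outline is a complete and more informative substitute for the paper's citations.
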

Recall that a matrix $P=(P_{ij}) \in \R^{n\times n}$
(resp.\ a row vector $m=(m_j) \in \R^{1 \times n}$)
is said to be {\em stochastic} when all its entries are nonnegative
and each of its rows sums to one, meaning that 
$P_{ij}\geq 0$ and $\sum_{\ell=1}^n P_{i\ell}=1$ for all $i,j\in [n]$
(resp.\ $m_j\geq 0$ for all $j \in [n]$ and $\sum_{j=1}^n m_j=1$).
The {\em directed graph} associated with $P$ is composed of the nodes $1,\dots,n$ and of the arcs $(i,j)$, $i \in [n], \; j \in [n]$ with $P_{ij}>0$.
A {\em class} of the matrix $P$ is a maximal set of nodes such that every two nodes of the set are connected by a directed path. 
A class is said to be {\em final} if every path starting from a node of this class remains in this class. 
We refer the reader to~\cite[Chap.~8]{bermanplemmons} for details. The previous properties are well known, in particular, the equivalence between \pref{prop-ergodic-graph} and~\pref{it-last} follows from Theorem 3.23 in the latter reference, 
whereas the remaining equivalences follow from Theorem~6.1 in~\cite{whittle}.

The scalar $\lambda$ in the ergodic equation~\eqref{e-ergodicmc}, known
as the {\em ergodic constant}, gives the coordinates of the constant vector~\eqref{e-cesaroold}.

The term ergodicity is generally used to refer to the uniqueness
of the invariant measure, and so, following Kemeny and Snell~\cite{kemenysnell}, we call {\em ergodic} a Markov chain with the above properties
of its transition probability matrix.
We warn the reader that some authors use the word ``ergodic'' 
in a stronger sense, requiring, for a finite Markov chain,
the matrix $P$ to be irreducible and aperiodic.

In this paper, we extend the notion of ergodicity to zero-sum two-player repeated games with finite state space $S=[n]$. We refer the reader to Section~\ref{sec-games} for the detailed definition of these games. 
\if{The latter games can be defined as follows.
We assume that the {\em actions spaces} $A_i$ and $B_i$ in state $i\in S$
of players \MIN\ and \MAX, respectively, are given and nonempty.
Then,  a {\em transition payment} is a function $r: (i,a,b)\mapsto r_i^{ab}$,
from the 
``state-actions space'' $\cup_{i\in S}(\{i\}\times A_i \times B_i)$ to $\R$,
and a {\em transition probability} is a function 
 $P: (i,a,b)\mapsto P_i^{ab}$, from
the same space
to the set $\Delta(S)$ of probabilities over $S$, identified with the subset of $\R^{1 \times n}$ of 
stochastic row vectors.
We denote by  $\Gamma(r,P)$ the repeated game with 
transition payment $r$ and transition probability $P$. At each stage,
if the current state is $i$, player \MIN\ selects an action $a\in A_i$,
player \MAX\ subsequently selects an action $b\in N_i$, player \MIN\ pays
$r_i^{ab}$ to player \MAX, and the probability that $j\in S$ be the next
state is given by $(P_i^{ab})_j$. We assume that the information
is perfect, so that each player observes the state and the previous
actions of the other player. }\fi 
For the moment, we shall only need to know that the game in horizon $k$ with initial state $i$ has a value, denoted by $v^k_i\in \R$, and that the value
vector $v^k=(v^k_i)_{1\leq i \leq n}$ is determined
from the
{\em Shapley operator} $T=T(r,P)$. The latter
is the  map $\R^n\to \R^n$ given by 
\begin{equation}\label{defshapley}
  [T(r,P)(x)]_i = \inf_{a\in A_i}\sup_{b\in B_i} (r_i^{ab}+ P_i^{ab}x) \enspace ,
\end{equation}
for all $x = (x_i)_{i \in S}$. Here,
$A_i$ denotes the set of actions of player \MIN\ in state $i\in S$,
$B_{i}$ denotes the set of actions of player \MAX\ in the same
state, $r_i^{ab}$ denotes an running payment made
by player \MIN\ to player \MAX\ in state $i$ when the actions $a,b$
are chosen, and $P_{i}^{ab}$ is a row vector such that $(P_i^{ab})_{j}$
represents the probability of transition from state $i$ to state
$j$, when the actions $a,b$ are chosen. 
It is known that the value vector $v^k=(v^k_i)_{i\in\state}$
can be computed recursively by 
\[
v^k=T(v^{k-1}),\qquad v_0 =0 \enspace .
\]
Here, we will be interested in the {\em mean payoff vector}
\begin{align*}
  \chi(T):=\lim_{k \to +\infty} \frac{v^k}{k} = 
  \lim_{k\to+\infty} \frac{T^k(0)}{k}\enspace ,
\end{align*}
where $T^k:= T\circ\dots\circ T$ denotes the $k$th iterate of $T$,
so that $[\chi(T)]_i$ represents the mean payoff per time unit
of the game starting from state $i$, as the horizon tends to infinity.

The question of the existence of the mean payoff vector
has been studied by several authors, including Bewley,
Kohlberg, Mertens, Neyman, Rosenberg, Sorin, see~\cite{KN81,MertNeym81,RS01,NS03}, and also~\cite{Rena11,vigeral2013,BGV13} for some recent results.

A basic analytic tool to establish
the existence of the limit is the so called nonlinear {\em ergodic equation}
\begin{align}
  T(u)= \lambda\unit + u \enspace.\label{e-ergodic}
\end{align}
If a solution $(\lambda,u)\in \R\times \R^n$ exists, then, it it easily seen that 
\[
\chi(T) = \lambda \unit \enspace .
\]
In particular, the mean payoff is independent of the initial state,
and it is given by the ergodic constant $\lambda$, as in the case of Markov chains. The ergodic equation has been much studied in the one-player stochastic
case, i.e., in ``ergodic control'', where it is also
known as the ``average case optimality equation'', see~\cite{hernandezlasserrebook} for background.

The ergodic equation~\eqref{e-ergodic} is equivalent to a nonlinear spectral
problem which has also received attention in nonlinear Perron-Frobenius
theory, see specially the work of Nussbaum~\cite{nussbaum88,nussbaum89},
and also~\cite{GG04,LN12}. Indeed, the map $T$ is conjugate
to the self-map $G= \exp \circ T \circ \log$ of the interior
of the standard positive cone of $\R^n$, $C := \{x \in \R^n \mid x \geq 0 \}$, where 
$\exp$ is the map from $\R^n$ to the interior of $C$ which
does $\exp$ entrywise, and $\log:= \exp^{-1}$. 
The ergodic problem is equivalent to 
the nonlinear spectral problem 
\begin{align}
  G(v)=\mu v, \quad v\in \operatorname{int}C,
  \quad \mu>0\label{e-pf} \enspace .
\end{align}
Since the map $G$
is order-preserving and positively homogeneous of degree one,
conditions for the existence of an eigenpair $(v,\mu)$ may
be thought of as nonlinear extensions of the Perron-Frobenius theorem.
It is useful to keep this equivalence in mind as several results relevant to Problem~\eqref{e-ergodic} have appeared in the context of the nonlinear eigenproblem~\eqref{e-pf}, see for instance~\cite{GG04,CH10}.

The problem of characterizing the set of solutions $u$ of the ergodic equation $T(u) = \lambda \unit + u$ has also appeared in the setting of max-plus spectral theory~\cite{BCOQ92,AGW}, and in weak KAM theory~\cite{FS05,Fathi-book}. These theories concern the one-player deterministic case. 
It is known that the above set is sup-norm isometric to a set of Lipschitz functions on a certain set (critical classes in the max-plus setting, or projected Aubry set in the weak KAM setting). 
Some of these results have been extended to one-player stochastic games with finite state space in~\cite{AG03}.
The extension of such results to the two player case appears to be an open question, which is among the motivations leading to the present study.

A useful tool to address the issue of the solvability
of the ergodic equation~\eqref{e-ergodic}, or of the corresponding nonlinear eigenproblem~\eqref{e-pf},
is the {\em recession function} associated with the Shapley operator,
\begin{equation}\label{def-recess}
\hat{T}: x \in \R^n \mapsto \hat{T}(x)=\lim_{\rho \to +\infty} \frac{T(\rho x)}{\rho}
\enspace,
\end{equation}
which has already been used in several ways~\cite{RS01,Sor04,GG04}.
In particular, Rosenberg and Sorin~\cite{RS01} gave conditions for the existence of the mean payoff vector of a two-person zero-sum stochastic game.
In their framework, the recession function appears as the Shapley operator of the ``projective'' game, which corresponds to the game with no running payments.

If the transition payment $r$
is bounded, the recession function $\hat{T}$ does exist,
and it is given by
\begin{equation}
  \label{eq:PaymentFree}
  [\hat T(x)]_i = \inf_{a \in A_i} \sup_{b \in B_i} P_i^{a b} x, \quad i \in \state, x \in \R^n \enspace .
\end{equation}
Hence, $\hat{T}=T(0,P)$, with $T$ as in~\eqref{defshapley},
so that the recession function of the Shapley operator associated with the game with payment function $r$
is merely the Shapley operator of the game in which $r$ is replaced by $0$.
For this reason, we shall refer to the
maps of the form~\eqref{eq:PaymentFree} as {\em payment-free Shapley operators}. 

Observe that every constant vector is
a fixed point of a payment-free Shapley operator. We shall refer to such a fixed
point as {\em trivial}. 
In~\cite{GG04}, Gaubert and Gunawardena show that the ergodic equation is solvable if $\hat{T}$ has only trivial fixed points. 
A sufficient explicit condition for this to hold, involving
a sequence of aggregated directed graphs, generalizing the classical
directed graph of Perron-Frobenius theory, was given there.

Then, in~\cite{CH10}, Cavazos-Cadena and Hern{\'a}ndez-Hern{\'a}ndez
introduced a weak convexity property, 
and showed that when the conjugate map $G= \exp \circ T \circ \log$  is weakly convex,
the recession function 
$\hat{T}$ has only trivial fixed points
if and only if the first of the directed graphs of~\cite{GG04}
consists of a single final class and of trivial classes
(reduced to one node, and loop free). They deduced
that when $G$ is weakly convex,
the ergodic equation for all maps $g+T$ with $g\in \R^n$
is solvable if and only if $\hat{T}$ has only trivial fixed
points.
We shall consider the same additive perturbations $g+T$ of the Shapley operator, but without any assumption on $T$ except that the payment $r$ be bounded. 
Indeed, this weak convexity property
is rarely satisfied for games although 
it captures an interesting class of risk sensitive problems.

\subsection{Description of the main results}
\label{subsec-desc}

Our main results, summarized in Theorem~\ref{th-game-ergodicity} at the end of the paper, show that most of the classical characterizations of ergodicity
for finite state Markov chains, seen as zero-player games, carry over to the two-player case.
More precisely, given a zero-sum game with finite state space and bounded transition payment $r$, we show in Section~\ref{realizable-section} that the following conditions are equivalent:
\begin{enumerate}
  \item all the fixed points of the recession function $\hat T$ of the Shapley operator $T$ are trivial (i.e\ constant);
  \item all the games obtained by adding to the transition payment $r$ 
  a perturbation depending only of the state
  have a constant mean payoff vector;
  \item the ergodic equation~\eqref{e-ergodic} is solvable for all maps $g+T$ with $g \in \R^n$. 
\end{enumerate}
In the zero-player special case, the above conditions correspond to
Points~\pref{it-1}--\pref{it-2} of Theorem~\ref{thm:ErgodicityMarkovChain}.
Hence, a zero-sum game will be said to be {\em ergodic} if it satisfies one of these
properties. 
An ingredient of this equivalence is the result of
Gaubert and Gunawardena in~\cite{GG04} described above.

In Section~\ref{galois-section},  we give a characterization of ergodicity in terms of a Galois connection
acting on faces of the hypercube $[0,1]^n$.
Then, in Section~\ref{compact-section}, we show that under a compactness assumption on the action spaces and a continuity assumption on the transition probability, the latter characterization of ergodicity of a game (involving a Galois connection) is equivalent to a reachability condition involving a pair of directed hypergraphs.
These two characterizations
 are a fundamental discrepancy with Point~\pref{prop-ergodic-graph} of the zero-player case.
However, the characterization of ergodicity involving the hypergraphs
still keeps the same flavor.
Indeed, the condition that a directed graph
has only one final class can be thought of as an accessibility condition
in this directed graph (see in particular Remark~\ref{remark-hypergraph-graph}).
Under the compactness and continuity assumption on the action spaces and the transition probability, the Galois connection as well as the hypergraphs are shown to depend only on the {\em support} of the transition probability $P$,
which we define to be the set of points at which the function $(i,a,b,j) \mapsto (P_{i}^{ab})_j$ takes nonzero values.
As a result, we get that the ergodicity of a game is a structural property depending only on the support of the transition probability.

We then consider (in Section~\ref{algorithmic-section}) several algorithmic problems concerning games with finite action spaces.
The first one is to check ergodicity.
The restricted version of this problem concerning deterministic games was addressed by Yang and Zhao~\cite{YZ04}, in the context of discrete event systems.
They showed that this problem is coNP-hard. 
However we show, as a corollary of the hypergraph characterization, that checking the ergodicity of a stochastic game is fixed parameter tractable:
if the dimension is fixed, we can solve it in polynomial time.
Note also that ergodicity can be checked in polynomial time for one-player stochastic games~\cite{AG03}.
We finally characterize the situation in which there exists a fixed point having its minimal and maximal entries in prescribed positions. As a by product,
we get a polynomial time algorithm to check the latter
property, from which it follows
that checking ergodicity is a coNP-complete problem. 
In Section~\ref{ex-section}, we illustrate our results on some examples.

The present results have been announced in the conference article~\cite{hochartMTNS}.

\section{Zero-sum games with perfect information and mean-payoff}
\label{sec-games}

\subsection{Basic definitions and results}

\label{sec-games-basic}
In this subsection, we describe formally the zero-sum game with perfect
information mentioned above, and state preliminary results. 

Recall that $S=[n]$ is the state space, $A_i$ is the set of
actions of player \MIN, $B_i$ is the set of actions of player \MAX,
$(i,a,b) \mapsto r_i^{ab}$ from $\cup_{i\in S} (\{i\}\times A_i \times B_i)$
to $\R$ is the transition payment,
and $(i,a,b)\mapsto P_i^{ab}$ from the same set to
$\Delta(S) \subset \R^{1 \times n}$, the set of nonnegative row vectors of sum one,
is the transition probability.
This game,
which we denote by $\Gamma(r,P)$, 
is played as follows.
Starting from a given state $i_0$ at time $k=0$, known by the players, \MIN\ chooses an action $a_0 \in A_{i_0}$. Then, knowing this choice, player \MAX\ chooses an action $b_0 \in B_{i_0}$.
Player \MIN\ has to pay $r_{i_0}^{a_0 b_0}$ to player \MAX\ and the next state, $i_1$, is chosen according to the probability $P_{i_0}^{a_0 b_0}$.
The same procedure is repeated at each time step, giving an infinite sequence $(i_\ell, a_\ell, b_\ell)_{\ell \geq 0}$.

A strategy $\sigma$ (resp.\ $\tau$) of player \MIN\ (resp.\ \MAX) is a map which assigns an action of player \MIN\ (resp.\ \MAX) to every finite history known by the player.
A triple $(i_0,\sigma,\tau)$ defines a probability measure on the set of plays (or histories), that is, the set of sequences $(i_\ell, a_\ell, b_\ell)_{\ell \geq 0}$ for which $a_\ell \in A_{i_\ell}$ and $b_\ell \in B_{i_\ell}$.
We denote by $\mathbb E_{i_0,\sigma,\tau}$ the corresponding expectation.
The total payoff of the game with finite horizon $k$ (consisting in $k$ time steps, that is $k$ successive alternated moves of players \MIN\ and \MAX) is given by
\[
J^k_{i_0}(\sigma,\tau) = \mathbb E_{i_0,\sigma,\tau} \left[ \sum_{\ell=0}^{k-1} r_{i_\ell}^{a_\ell b_\ell} \right] \enspace .
\]
Player \MIN\ wishes to minimize this quantity, while player \MAX\ wishes to maximize it.
The value of the $k$-stage game  (the game played in finite horizon $k$)
starting at state $i$ is thus defined as
\[
v^k_{i} = \inf_\sigma \sup_\tau J^k_i(\sigma,\tau) \enspace ,
\]
the infimum and the supremum being taken over the set of strategies of players \MIN\ and \MAX, respectively. Here, the infimum and supremum commute.

It is known (see e.g.~\cite{NS03}) that the value vector $v^k=(v_i^k)$
satisfies $v^k=T(v^{k-1})$ and $v^0=0$, where $T=T(r,P)$ is the Shapley
operator defined by~\eqref{defshapley}.

Let $\A$ denote the set of (feedback) policies
of player \MIN, which are the maps $\sigma$
from $S$ to $\cup_{i\in S} A_i$ such that $\sigma(i)\in A_i$ for all $i\in S$,
and let $\B$ denote
the set of policies of player \MAX, which are the maps $\tau$ from 
$\cup_{i\in S} (\{i\}\times A_i)$ to $\cup_{i\in S} B_i$ such that 
$\tau(i,a)\in B_i$ for all $i\in S$ and  $a\in A_i$.
Recall that a strategy of player \MIN\ (resp.\ \MAX) is Markovian
if it only depends on the information of the current stage $k\geq 0$,
that is $a_k=\sigma_k(i_k)$ for some $\sigma_k\in\A$
(resp.\ $b_k=\tau_k(i_k,a_k)$  for some $\tau_k\in \B$).
Moreover, such a strategy is stationary if it is independent of $k$
($\sigma_k=\sigma\in\A$ and $\tau_k=\tau$ for all $k\geq 0$),
in which case it can be identified with the corresponding policy.
Then it is known that the above (dynamic programming) equation
provides optimal or $\epsilon$-optimal strategies of the two players 
that are Markovian.
Indeed, $T$ can be rewritten as follows:
\[ T(x) = \inf_{\sigma\in \A}\sup_{\tau\in \B} (r^{\sigma\tau}+ P^{\sigma\tau}x) 
=\sup_{\tau\in \B}  \inf_{\sigma\in \A} (r^{\sigma\tau}+ P^{\sigma\tau}x) \enspace ,\]
where 
$P^{\sigma\tau}_i= P_i^{\sigma(i)\tau(i,\sigma(i))}$,
and similarly for $r^{\sigma\tau}$, and
the infimum and supremum are taken for the usual partial order of $\R^n$
(the product partial order of the usual order on $\R$).
Moreover, the infimum and supremum can be approached arbitrarily
by the value of $r^{\sigma\tau}+ P^{\sigma\tau}x$ for some policies
$\sigma$ and $\tau$, and they are equal to such a value
when the action spaces $A_i$ and $B_i$ are compact and the transition payment
and probability functions are continuous.
In the latter case, we say that $\sigma$ and $\tau$ are optimal for $T(x)$.
Optimal strategies for the game in horizon $k\geq 0$ 
are then obtained by taking for all $0\leq \ell<k$,
$a_\ell=\sigma_{\ell}(i_\ell)$ and $b_\ell=\tau_{\ell}(i_\ell,a_\ell)$
for some $\sigma_\ell\in\A$ and $\tau_\ell\in \B$
optimal for $T(v^{k-\ell-1})$.

The Shapley operator $T$ satisfies the following properties:
\begin{itemize}
  \item[-] {\em monotonicity}, a.k.a.\ {\em order preservation}: $x \leq y \enspace \Rightarrow \enspace T(x) \leq T(y)$, where $\R^n$ is endowed with its usual partial order;
  \item[-] {\em additive homogeneity}: $T(x + \alpha \unit) = T(x) + \alpha \unit, \enspace x \in \R^n, \enspace \alpha \in \R$, recalling that $\unit$ denotes the unit vector of $\R^n$;
  \item[-] {\em nonexpansiveness} in the sup-norm: $\|T(x)-T(y)\| \leq \|x-y\|, \enspace x, y \in \R^n$, where $\|x\|:=\max_{1\leq i\leq n}|x_i|$.
\end{itemize}

\subsection{Games with mean payoff}

The {\em mean payoff vector} is defined as the limit 
\[ \chi(T)=\lim_{k\to\infty} \frac{T^k(x)}{k} \enspace , \]
for all $x\in \R^n$. Since $T$ is nonexpansive, the existence and the value of the latter limit is independent of the choice of $x$.
In particular, we have the following standard result:
\begin{proposition}\label{ergodic-solvable}
  If the following {\em ergodic equation} is solvable:
  \begin{equation}
    \label{eq:Ergodic}
    \exists (\lambda,u) \in \R \times \R^n, \quad T(u) = \lambda \unit + u \enspace ,
  \end{equation}
  then $\chi(T)$ exists and is equal to $ \lambda \unit$.
  In particular, the average payment (per time unit) of $\Gamma(r,P)$ is asymptotically independent of the initial state.
\end{proposition}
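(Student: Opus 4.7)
The plan is to use the three structural properties of the Shapley operator listed just above: monotonicity, additive homogeneity, and sup-norm nonexpansiveness. First I would exploit additive homogeneity to iterate the ergodic equation. Starting from $T(u) = \lambda\unit + u$, applying $T$ again and using $T(y+\alpha\unit)=T(y)+\alpha\unit$ with $y=u$ and $\alpha=\lambda$, one gets $T^2(u) = T(u+\lambda\unit) = T(u)+\lambda\unit = 2\lambda\unit + u$. An immediate induction on $k$ yields the closed form
\begin{equation*}
T^k(u) = k\lambda\unit + u, \qquad k\geq 0.
\end{equation*}
Dividing by $k$, we obtain $T^k(u)/k = \lambda\unit + u/k \to \lambda\unit$ as $k\to\infty$, which already proves convergence of the rescaled orbit starting from the specific point $u$.

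The remaining task is to promote this to convergence starting from an arbitrary $x\in\R^n$. Here is where nonexpansiveness enters: since $T$ is 1-Lipschitz in the sup-norm, so is each iterate $T^k$, giving
\begin{equation*}
\|T^k(x) - T^k(u)\| \leq \|x-u\|.
\end{equation*}
Dividing by $k$ shows $\|T^k(x)/k - T^k(u)/k\| \leq \|x-u\|/k \to 0$. Combined with the first step, this forces $T^k(x)/k \to \lambda\unit$ for every $x\in\R^n$. By the definition of $\chi(T)$ recalled just before the proposition, this proves both the existence of $\chi(T)$ and the identity $\chi(T)=\lambda\unit$.

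There is no real obstacle here; the argument is purely soft and mimics the classical one-player case. The one thing worth being explicit about is why the limit is independent of the starting point, which is the content of the nonexpansiveness step and which also justifies the remark that the asymptotic mean payment per time unit does not depend on the initial state $i\in S$.
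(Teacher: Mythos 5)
Your argument is correct and coincides with the paper's proof: additive homogeneity gives $T^k(u)=k\lambda\unit+u$, hence $T^k(u)/k\to\lambda\unit$, and the nonexpansiveness step you spell out is exactly the remark the paper makes just before the proposition to justify that the limit does not depend on the starting point. No differences worth noting.
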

\begin{proof}
  Since $T$ is additively homogeneous, we have $T^k(u)=k\lambda \unit +u$,
  and so $\chi(T)=\lim_{k\to\infty}T^k(u)/k = \lambda \unit$.
\end{proof}
Moreover,  if $u$ is a solution of the above ergodic equation,
optimal policies $\sigma$ and $\tau$ 
of players \MIN\ and \MAX\ for $T(u)$, if they exist,
provide optimal strategies of the two players 
that are Markovian and stationary.

The ergodic equation~\eqref{eq:Ergodic} can be studied by means of the {\em recession} function $\hat{T}$ of $T$, defined by~\eqref{def-recess}.
The recession function of $T$ is well defined as soon as the transition payment is  bounded. Then, 
$\hat T$ is
given by~\eqref{eq:PaymentFree}, so that $\hat{T}=T(0,P)$.

\begin{definition}[Payment-free Shapley operators]
  A Shapley operator is said to be {\em payment-free}
  if it is of the form $F=T(0,P)$, 
  where  $P$ is a transition probability and $T$ is as in~\eqref{defshapley}.
\end{definition}

As any Shapley operator, a payment-free Shapley operator $F$ is monotone and additively homogeneous.
It is also positively homogeneous, that is, $F(\lambda x) = \lambda F(x)$,
for all $x \in \R^n, \lambda > 0$.
As a consequence, it satisfies $F(\lambda \unit) = \lambda \unit$ for every $\lambda \in \R$.
We call such fixed points the {\em trivial} fixed points of $F$. 
We shall use the following sufficient condition for the solvability
of the ergodic equation. 

\begin{theorem}[{Corollary of Gaubert and Gunawardena~\cite[Theorems~9 and 13]{GG04}}]
  \label{theo-GG}
  Consider a game $\Gamma(r,P)$, such that the recession function
  $\hat T$ exists. Then, 
  if $\hat T$ has only trivial fixed points,
  the ergodic equation~\eqref{eq:Ergodic} is solvable.
\end{theorem}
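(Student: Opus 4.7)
I would establish the contrapositive: assuming the ergodic equation $T(u)=\lambda\unit+u$ has no solution, I would construct a nontrivial (nonconstant) fixed point of $\hat T$.

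Since $T$ is additively homogeneous, it descends to a map $\tilde T$ on the quotient $\R^n/\R\unit$, which is nonexpansive for the quotient sup-norm (equivalently, for the Hilbert oscillation seminorm $\operatorname{osc}(x):=\max_i x_i-\min_i x_i$). Solvability of the ergodic equation amounts to the existence of a fixed point of $\tilde T$, and a general fact about sup-norm nonexpansive self-maps of a finite-dimensional space (Sine--Nussbaum) reduces this to the boundedness of the orbit $\rho_k:=\operatorname{osc}(T^k(0))$. Thus it suffices to show that if $\rho_{k_n}\to\infty$ along some subsequence $k_n\to\infty$, then $\hat T$ admits a nontrivial fixed point.

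Assume $\rho_{k_n}\to\infty$ and set $m_n:=\min_i[T^{k_n}(0)]_i$, $\rho_n:=\rho_{k_n}$, and
\[
x_n:=\frac{T^{k_n}(0)-m_n\unit}{\rho_n}\in[0,1]^n,
\]
so that $\min_i(x_n)_i=0$ and $\max_i(x_n)_i=1$. By compactness, a subsequence satisfies $x_n\to x^*$, with $\operatorname{osc}(x^*)=1$; in particular $x^*$ is nonconstant. Additive homogeneity of $T$ gives $T(\rho_n x_n)=T^{k_n+1}(0)-m_n\unit$, hence
\[
\frac{T(\rho_n x_n)}{\rho_n}=x_n+\frac{T^{k_n+1}(0)-T^{k_n}(0)}{\rho_n}.
\]
Sup-norm nonexpansiveness of $T$ yields $\|T^{k_n+1}(0)-T^{k_n}(0)\|_\infty\leq\|T(0)\|_\infty$, so the rightmost term vanishes in the limit. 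On the other hand, the rescaled maps $x\mapsto T(\rho x)/\rho$ are equi-nonexpansive (Lipschitz constant $1$) and, because the transition payment $r$ is bounded, they converge pointwise to $\hat T$ as $\rho\to\infty$; by an Ascoli-type argument this upgrades to locally uniform convergence, so that $x_n\to x^*$ and $\rho_n\to\infty$ together give $T(\rho_n x_n)/\rho_n\to\hat T(x^*)$. Passing to the limit in the displayed identity produces $\hat T(x^*)=x^*$, contradicting the assumption that $\hat T$ has only trivial fixed points.

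The main technical point is the simultaneous passage to the limit in $\rho_n\to\infty$ and $x_n\to x^*$, for which the equi-Lipschitz property of the rescaled maps is the key ingredient. A secondary subtlety is the appeal to the Sine--Nussbaum theorem to go from a bounded quotient orbit to a genuine fixed point of $\tilde T$; if one prefers to avoid this, it can be replaced by a vanishing-discount argument, working with the contraction $T^\beta(x):=\inf_a\sup_b(r^{ab}+\beta P^{ab}x)$ and its unique fixed point $u^\beta$, and showing along the same normalization scheme that unboundedness of $\operatorname{osc}(u^\beta)$ as $\beta\to 1^-$ would again yield a nontrivial fixed point of $\hat T$.
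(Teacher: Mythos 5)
Your proposal is correct, and it is worth noting that the paper itself offers no proof of this statement: Theorem~\ref{theo-GG} is imported verbatim as a corollary of \cite[Theorems~9 and 13]{GG04}. What you have written is therefore a self-contained argument rather than a variant of an in-paper proof. Your core step --- normalizing a divergent quantity by its oscillation, extracting a convergent subsequence in $[0,1]^n$, and passing to the limit in $T(\rho_n x_n)/\rho_n$ using the equi-nonexpansiveness of the rescaled maps to land on a nonconstant fixed point of $\hat T$ --- is essentially the mechanism behind \cite[Theorem~13]{GG04}, except that there the normalization is applied to an unbounded sequence in a slice space $\{x : \lambda\unit + x \leq T(x) \leq \mu\unit + x\}$ rather than to the orbit $T^k(0)$. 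Where you genuinely diverge is in the reduction: \cite{GG04} passes through the nonemptiness and Hilbert-seminorm boundedness of slice spaces (their Theorem~9), whereas you quotient by $\R\unit$ and invoke the fact that a nonexpansive self-map of a finite-dimensional normed space with a bounded orbit has a fixed point (provable by the asymptotic-center argument: minimize $y \mapsto \limsup_k \|T^k(0) - y\|$ over the quotient and apply Brouwer to the invariant set of minimizers; the attribution to Sine--Nussbaum is loose, as their theorems concern periodicity, but the fact is standard). Each small step checks out: $T(\rho_n x_n) = T^{k_n+1}(0) - m_n\unit$ by additive homogeneity, $\|T^{k_n+1}(0) - T^{k_n}(0)\| \leq \|T(0)\|$ by nonexpansiveness, $\operatorname{osc}(x^*) = 1$ by continuity of the oscillation, and the simultaneous limit needs only pointwise convergence of $T_\rho$ at $x^*$ (guaranteed by the hypothesis that $\hat T$ exists) together with the uniform Lipschitz bound. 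Your route buys a proof that never mentions slice spaces and makes the equivalence ``ergodic equation solvable $\Leftrightarrow$ $\operatorname{osc}(T^k(0))$ bounded'' explicit, which is a useful byproduct; the slice-space route of \cite{GG04} buys finer information (bounds on where the bias vector $u$ lives) that your argument discards.
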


\section{Realizable mean payoffs}
\label{realizable-section}

We now show that the recession function of the Shapley operator
$T$ of the game $\Gamma(r,P)$ can be used to characterize the realizable mean payoff vectors of the games $\Gamma(r+g,P)$, where $g$ is a bounded
additive perturbation of the transition payment $r$.

Observe first that such a bounded additive perturbation $g$ of the transition payment $r$ does not change the recession function $\widehat{T}=T(0,P)$.
Moreover, combining Theorem~\ref{theo-GG} and 
Proposition~\ref{ergodic-solvable}, we get that if $\widehat{T}$
has only trivial fixed points, then $\chi(T)$ exists and is
a constant vector.
When the  mean payoff vector is already known to
exist, the following result, noted by several authors,
extends this assertion, since
it concerns also the case where $\hat T$ has non trivial fixed points.
\begin{proposition}[See~\cite{RS01,Sor04,GG04}]\label{impliesfixedpoint}
  Consider a game $\Gamma(r,P)$, such that  the recession function
  $\hat T$ and the mean payoff vector $\chi=\chi(T)$ 
  exist. Then $\hat{T}( \chi ) = \chi$.
\end{proposition}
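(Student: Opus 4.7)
The plan is to connect the asymptotic growth rate $\chi$ of the iterates $T^k(0)$ to the recession function by evaluating the defining limit of $\hat T$ at $\chi$ along the integer sequence $\rho=k$.

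First, I would start from the definition $\chi = \lim_{k\to\infty} T^k(0)/k$, which rewrites as $T^k(0) = k\chi + \epsilon_k$ with $\|\epsilon_k\|/k \to 0$. Applying $T$ once more and dividing by $k$, one has $T^{k+1}(0)/k = \tfrac{k+1}{k}\cdot T^{k+1}(0)/(k+1) \to \chi$ as $k\to\infty$.

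Next, I would invoke the sup-norm nonexpansiveness of $T$ to compare $T(k\chi)$ with $T(T^k(0))=T^{k+1}(0)$:
\[
\bigl\|T(k\chi) - T^{k+1}(0)\bigr\| \;\leq\; \bigl\|k\chi - T^k(0)\bigr\| \;=\; \|\epsilon_k\| \;=\; o(k).
\]
Dividing by $k$ and combining with the previous step yields
\[
\lim_{k\to\infty}\frac{T(k\chi)}{k} \;=\; \lim_{k\to\infty}\frac{T^{k+1}(0)}{k} \;=\; \chi.
\]

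Finally, since the recession function $\hat T$ is assumed to exist, the limit $\lim_{\rho\to+\infty}T(\rho\chi)/\rho = \hat T(\chi)$ holds in particular along the integer sequence $\rho=k$, so $\hat T(\chi) = \chi$.

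The argument has no real obstacle: the only ingredients are nonexpansiveness of $T$ (already listed among its basic properties), the definition of $\chi$, and the definition of $\hat T$. The mild subtlety to watch is simply that the limit defining $\hat T$ is taken over real $\rho\to+\infty$, so one must note that its existence forces agreement with the integer subsequential limit computed above.
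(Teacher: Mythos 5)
Your proof is correct and follows essentially the same route as the paper's: both apply sup-norm nonexpansiveness to compare $T(k\chi)$ with $T^{k+1}(0)$, divide by $k$, and pass to the limit using the definitions of $\chi$ and $\hat T$. The only cosmetic difference is your explicit bookkeeping with $\epsilon_k$ and the remark about restricting the limit defining $\hat T$ to integer $\rho$, which the paper leaves implicit.
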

We give the short proof for the convenience of the reader.
\begin{proof}
  Since $T$ is nonexpansive in the sup-norm $\| \cdot \|$, we have, for every vectors $x,y$ and every integer $n$,
  \[ \Big\| \frac{T(n x)-T(n y)}{n} \Big\| \leq \| x-y \| \enspace . \]
  Hence, taking $x=\chi$ and $y=T^n(0)/n$, we get
  \[ \Big\| \frac{T(n \chi)}{n} - \frac{T^{n+1}(\zero)}{n} \Big\| \leq \Big\| \chi - \frac{T^n(\zero)}{n} \Big\| \enspace . \]
  All the terms in the above inequality converge.
  Taking their limit, we obtain
  \[ \| \hat{T}(\chi) - \chi \| \leq \zero \enspace . \]
\end{proof}

We can also show a  converse statement, leading to the following equivalences.

\begin{proposition}[Realizable mean payoffs]
  \label{prop:GivenMeanPayoff}
  Let us fix a state space $S=\{1,\ldots,n\}$, and the actions
  spaces $A_i$ and $B_i$ of the two players.
  Consider a payment-free Shapley operator $F=T(0,P)$ 
  with transition probability $P$, and $T$ as in~\eqref{defshapley}.
  Then, the following assertions are equivalent:
  \begin{enumerate}
    \item\label{realmean1} 	$\nu \in \R^n$ is a fixed point of $F$;
    \item\label{realmean2} there exist a bounded transition payment $r$ such that the mean payoff vector of the game $\Gamma(r,P)$ exists and is equal to $\nu$;
    \item\label{realmean3} there exist a transition payment $r$ such that the recession function $\hat{T}$ and the mean payoff vector of the game $\Gamma(r,P)$ exist and are equal to $F$ and $\nu$ respectively.
  \end{enumerate}
\end{proposition}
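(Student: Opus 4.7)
The plan is to prove the cycle $(\ref{realmean3}) \Rightarrow (\ref{realmean2}) \Rightarrow (\ref{realmean1}) \Rightarrow (\ref{realmean3})$, where the first implication is immediate from the statements, the second is a direct application of Proposition~\ref{impliesfixedpoint}, and the third is by an explicit construction of a payment $r$ that realizes $\nu$ as a mean payoff.

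For $(\ref{realmean2}) \Rightarrow (\ref{realmean1})$, I would observe that since $r$ is assumed to be bounded, the recession function $\widehat{T}$ of $T=T(r,P)$ exists and coincides with $T(0,P)=F$, by formula~\eqref{eq:PaymentFree}. Since the mean payoff vector $\chi(T)=\nu$ also exists, Proposition~\ref{impliesfixedpoint} yields $F(\nu)=\widehat{T}(\chi(T))=\chi(T)=\nu$, so $\nu$ is a fixed point of $F$.

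For $(\ref{realmean1}) \Rightarrow (\ref{realmean3})$, the key idea is to choose the transition payment so that the payment depends only on the current state and equals the target mean payoff there. Explicitly, I would set
\[
r_i^{ab} := \nu_i, \qquad i\in S,\ a\in A_i,\ b\in B_i.
\]
This $r$ is bounded since $\nu\in \R^n$, so the recession function of $T=T(r,P)$ exists and equals $F$. Moreover, pulling the $\nu_i$ out of the $\inf_a\sup_b$ in~\eqref{defshapley} gives $T(x)=\nu+F(x)$ for every $x\in\R^n$. Using $F(\nu)=\nu$ together with the positive homogeneity of $F$, a trivial induction shows that $T^k(0)=k\nu$ for all $k\geq 0$: indeed $T^{k+1}(0)=T(k\nu)=\nu+F(k\nu)=\nu+kF(\nu)=(k+1)\nu$. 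Dividing by $k$ and letting $k\to\infty$ gives $\chi(T)=\nu$, as required.

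No step is really an obstacle here; the content of the proposition lies entirely in finding the right construction in $(\ref{realmean1}) \Rightarrow (\ref{realmean3})$, and the state-indexed constant payment $r_i^{ab}=\nu_i$ is the natural candidate because it reduces the iteration of $T$ to the iteration of $F$ on the orbit of a fixed point, which is trivial. Finally, since $(\ref{realmean3})$ obviously implies $(\ref{realmean2})$, the three assertions are equivalent.
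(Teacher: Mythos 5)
Your construction for \pref{realmean1}$\Rightarrow$\pref{realmean3} is exactly the paper's: take $r_i^{ab}=\nu_i$, observe $T=\nu+F$, and iterate on the fixed point to get $T^k(0)=k\nu$. Your \pref{realmean2}$\Rightarrow$\pref{realmean1} is also fine and is just the paper's two easy legs composed. The one problem is the leg you dismiss as ``immediate from the statements'', namely \pref{realmean3}$\Rightarrow$\pref{realmean2}. Assertion \pref{realmean3} only asserts the existence of \emph{some} transition payment $r$ (not assumed bounded) whose recession function and mean payoff are $F$ and $\nu$; assertion \pref{realmean2} requires a \emph{bounded} payment. Existence of the recession function does not force $r$ to be bounded (e.g.\ payments unbounded over the action sets can still yield a finite $\inf\sup$), so you cannot simply reuse the $r$ from \pref{realmean3}. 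The paper sidesteps this by orienting the cycle the other way: \pref{realmean2}$\Rightarrow$\pref{realmean3} is the genuinely immediate implication (boundedness gives the recession function), \pref{realmean3}$\Rightarrow$\pref{realmean1} is Proposition~\ref{impliesfixedpoint}, and \pref{realmean1}$\Rightarrow$\pref{realmean2} is the construction.

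The gap is easily repaired with material you already have: Proposition~\ref{impliesfixedpoint} needs only the hypotheses of \pref{realmean3} (existence of $\hat T=F$ and of $\chi=\nu$), so it gives \pref{realmean3}$\Rightarrow$\pref{realmean1} directly; and your explicit $r_i^{ab}=\nu_i$ is bounded, so your construction in fact proves \pref{realmean1}$\Rightarrow$\pref{realmean2}, not just \pref{realmean1}$\Rightarrow$\pref{realmean3}. Rewiring the cycle as \pref{realmean1}$\Rightarrow$\pref{realmean2}$\Rightarrow$\pref{realmean3}$\Rightarrow$\pref{realmean1} closes the argument without the unjustified step.
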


\begin{proof}
  The implication~\pref{realmean2}$\Rightarrow$\pref{realmean3} is easy,
  and the implication~\pref{realmean3}$\Rightarrow$\pref{realmean1} 
  comes from Proposition~\ref{impliesfixedpoint}. Let us 
  show~\pref{realmean1}$\Rightarrow$\pref{realmean2}.
  
  Consider the transition payment $r$ such that $r_i^{a b} = \nu_i$ for every $i \in S$ and every $(a,b) \in A_i \times B_i$.
  The Shapley operator $T$ of the game $\Gamma(r,P)$ satisfies, by construction, $T(x) = F(x)+\nu$ for all $x\in\R^n$.
  
  For every integer $k$ we have $T(k \nu) = k F(\nu) + \nu = (k+1) \nu$, so that, by induction, $T^k(\zero) = k\nu$.
  This proves that the mean payoff vector of $\Gamma(r,P)$ 
  exists and is equal to $\nu$.
\end{proof}

Hence, for parametric games $\Gamma(\cdot,P)$ with fixed state space, action spaces and transition probability, the fixed points of the corresponding payment-free Shapley operator give exactly all the realizable mean payoff vectors.

We shall say that the game $\Gamma(r,P)$ is {\em ergodic} if it satisfies the conditions of the following theorem. 
\begin{theorem}[Ergodicity of zero-sum games]\label{th-game-ergodicity-full}
  Let us fix a state space $S=\{1,\ldots,n\}$, and the actions
  spaces $A_i$ and $B_i$ of the two players.
  Let $r$ be a bounded transition payment, $P$ be a transition probability,
  and let  $T=T(r,P)$ be the Shapley operator of the game $\Gamma(r,P)$.
  Then, the following properties are equivalent:
  \begin{enumerate}
    \item\label{game-ergodic-10} the recession function $\widehat{T}=T(0,P)$ has only trivial fixed points;
    \item\label{game-ergodic-20} the mean payoff vector of the game $\Gamma(r+g,P)$ does exist and is constant for all additive perturbations $g$ of the transition payment depending only of the state (so $g_i^{a,b}=g_i$, for all $i\in S$, $a\in A_i$ and $b\in B_i$);
    \item\label{game-ergodic-30} the ergodic equation $g + T(u)=\lambda \unit + u$ is solvable 
      for all vectors $g\in \R^n$;
    \item\label{game-ergodic-2p} the mean payoff vector of the game $\Gamma(r+g,P)$ does exist and is constant for all bounded additive perturbations $g$ of the transition payment $r$;
    \item\label{game-ergodic-3p} the ergodic equation $T'(u)=\lambda \unit + u$ 
      admits a solution $(\lambda,u)\in\R \times \R^n$, 
      for all Shapley operators $T'=T(r+g,P)$ associated to
      bounded additive perturbations $g$ of the transition payment.
  \end{enumerate}
\end{theorem}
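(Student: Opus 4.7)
The plan is to establish the cycle (\ref{game-ergodic-10}) $\Rightarrow$ (\ref{game-ergodic-3p}) $\Rightarrow$ (\ref{game-ergodic-2p}) $\Rightarrow$ (\ref{game-ergodic-20}) $\Rightarrow$ (\ref{game-ergodic-10}), together with the side implications (\ref{game-ergodic-10}) $\Rightarrow$ (\ref{game-ergodic-30}) $\Rightarrow$ (\ref{game-ergodic-20}). Four of the five arrows should follow directly from results already available in the paper, and the only non-routine step is (\ref{game-ergodic-20}) $\Rightarrow$ (\ref{game-ergodic-10}), for which I plan a scaling argument connecting $T$ to its recession function $\hat T$.

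First I would record a preliminary observation unifying the easy directions: for any bounded additive perturbation $g$ of the transition payment, the Shapley operator $T' := T(r+g,P)$ has the same recession as $T$. Indeed, from \eqref{defshapley} and \eqref{eq:PaymentFree} one reads off $\sup_x \|T'(x) - \hat T(x)\|_\infty \leq \|r+g\|_\infty$, and dividing by $\rho$ yields $T'(\rho x)/\rho \to \hat T(x)$. With this in hand, (\ref{game-ergodic-10}) $\Rightarrow$ (\ref{game-ergodic-3p}) is an immediate invocation of Theorem~\ref{theo-GG} applied to each such $T'$; (\ref{game-ergodic-3p}) $\Rightarrow$ (\ref{game-ergodic-2p}) is Proposition~\ref{ergodic-solvable}; (\ref{game-ergodic-2p}) $\Rightarrow$ (\ref{game-ergodic-20}) is specialization to state-only $g$; and the side path (\ref{game-ergodic-10}) $\Rightarrow$ (\ref{game-ergodic-30}) $\Rightarrow$ (\ref{game-ergodic-20}) runs analogously with $g \in \R^n$ in place of bounded $g$.

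For the main step (\ref{game-ergodic-20}) $\Rightarrow$ (\ref{game-ergodic-10}), given a fixed point $\nu$ of $\hat T$, the aim is to show that $\nu$ is a scalar multiple of $\unit$. I plan to exploit Proposition~\ref{prop:GivenMeanPayoff}, which, applied with $F = \hat T$, gives $\chi(\hat T + \nu) = \nu$ for free (via the state-only payment $r'_i{}^{ab} := \nu_i$). Next, using the state-only perturbation $g = \rho \nu$ with $\rho > 0$, hypothesis~(\ref{game-ergodic-20}) ensures that $\chi(T + \rho \nu)$ exists and equals $\lambda(\rho)\unit$ for some scalar $\lambda(\rho)$. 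Setting $T_\rho(x) := \rho^{-1} T(\rho x)$, the conjugation identity $(T + \rho\nu)(\rho x) = \rho(T_\rho(x) + \nu)$ iterates to give $(T + \rho\nu)^k(0) = \rho\,(T_\rho + \nu)^k(0)$, whence $\chi(T + \rho \nu) = \rho\,\chi(T_\rho + \nu)$. The preliminary bound together with positive homogeneity of $\hat T$ forces $\|T_\rho - \hat T\|_\infty \leq \|r\|_\infty/\rho \to 0$, and the Lipschitz estimate $\|\chi(T_1) - \chi(T_2)\|_\infty \leq \|T_1 - T_2\|_\infty$ --- an immediate induction from sup-norm nonexpansiveness --- lets me pass to the limit and conclude $\chi(T_\rho + \nu) \to \nu$. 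Therefore $\nu = \lim_{\rho \to \infty} \lambda(\rho)\unit/\rho$ is a constant vector, as desired.

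The crux is this last limit: one needs both $\chi(T_\rho + \nu)$ and $\chi(\hat T + \nu)$ to exist before taking $\rho \to \infty$, and, crucially, $\|T_\rho - \hat T\|_\infty \to 0$ must hold \emph{uniformly on $\R^n$}, not merely pointwise. Uniform convergence is what makes the operator-Lipschitz estimate applicable, and it ultimately depends on the transition payment $r$ being bounded --- the only standing hypothesis in the statement of the theorem.
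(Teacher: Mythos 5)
Your proposal is correct and follows essentially the same route as the paper: the four easy implications are dispatched exactly as in the paper (invariance of the recession function under bounded perturbations, Theorem~\ref{theo-GG}, Proposition~\ref{ergodic-solvable}, and specialization), and your key step for \pref{game-ergodic-20}$\Rightarrow$\pref{game-ergodic-10} perturbs by large multiples of the candidate fixed point $\nu$ and uses boundedness of $r$ to trap $\chi(T+\rho\nu)$ within sup-norm distance $\|r\|_\infty$ of $\rho\nu$, which is precisely the paper's estimate $s\eta-C\leq\chi_s\leq s\eta+C$. The only cosmetic difference is that you derive this bound via the rescaling $T_\rho(x)=\rho^{-1}T(\rho x)$ together with the operator-Lipschitz property of the mean payoff and the identity $\chi(\hat T+\nu)=\nu$, whereas the paper obtains it by a direct induction on the iterates $(T_s)^k(0)$; both hinge on the same uniform bound $\sup_x\|T(x)-\hat T(x)\|\leq\|r\|_\infty$.
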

\begin{proof}
  As said above, a bounded additive perturbation $g$ of the transition payment $r$
  does not change the recession function: $\widehat{T}=T(0,P)=\widehat{T'}$,
when $T'=T(r+g,P)$.
  Hence the implication
  \pref{game-ergodic-10}$\Rightarrow$\pref{game-ergodic-3p} 
  follows from Theorem~\ref{theo-GG}.
  Moreover, the implication
  \pref{game-ergodic-3p}$\Rightarrow$\pref{game-ergodic-2p} follows from 
  Proposition~\ref{ergodic-solvable}.
  Similarly, we have \pref{game-ergodic-30}$\Rightarrow$\pref{game-ergodic-20},
  since if $g$ is an additive perturbation of the transition payment
  depending only of the state (that is $g_i^{a,b}=g_i$, for all $i\in S$, 
  $a\in A_i$ and $b\in B_i$), then $T(r+g,P)=g+T(r,P)$.
  The implications \pref{game-ergodic-3p}$\Rightarrow$\pref{game-ergodic-30} 
  and \pref{game-ergodic-2p}$\Rightarrow$\pref{game-ergodic-20} are trivial.
  
  Hence, all the equivalences will follow from the implication 
  \pref{game-ergodic-20}$\Rightarrow$\pref{game-ergodic-10}, that we now prove.
  Assume that \pref{game-ergodic-20} holds.
  This means that the mean payoff vector of the game $\Gamma(r+g,P)$ does exist and is constant for all additive perturbations $g$ of the transition payment depending only of the state.
  Let $\eta$ be a fixed point of the recession function $\widehat{T}=T(0,P)$.
  Denote $T=T(r,P)$, and let $C>0$ be a bound of the transition payment $r$.
  We have 
  \begin{equation}\label{boundhat}
    -C+\widehat{T}(x)\leq T(x)\leq C+ \widehat{T}(x),\quad \forall x\in \R^n \enspace. 
  \end{equation}
  Let $s$ be an integer, consider the additive perturbation $g_s=s \eta$ of the transition
  payment, and denote $T_s=T(r+g_s,P)=g_s+T$. 
  Let us show by induction:
  \begin{equation}\label{boundts}
    k(s\eta -C)\leq (T_s)^k(0)\leq k(s\eta +C)\enspace.
  \end{equation}
  Indeed,  $T_s(0)=s\eta +T(0)$ and by~\eqref{boundhat}, we get
  that $-C\leq T(0)\leq C$, which shows~\eqref{boundts} for $k=1$.
  Assume that~\eqref{boundts} holds for $k\geq 1$.
  Then, by the monotonicity of $T_s$, we get that
  $(T_s)^{k+1}(0)\leq T_s(k(s\eta +C))$. Then, using the definition 
  of $T_s$, the additive homogeneity of $T$ and~\eqref{boundhat},
  we deduce:
  $T_s(k(s\eta +C))= s\eta +T(ks \eta+ kC)=s\eta +k C +T(k s\eta)
  \leq s\eta +(k+1) C +\widehat{T}(k s\eta)$.
  Since $\widehat{T}$ is positively homogeneous and $\eta$ is a fixed point
  of $\widehat{T}$\, we obtain that $\widehat{T}(ks\eta)=k s\eta$, hence
  $T_s(k(s\eta +C))\leq s\eta +(k+1) C +k s\eta=(k+1) (s\eta +C)$,
  which shows the second inequality of~\eqref{boundts} for $k+1$.
  The first inequality is obtained with the same arguments.
  
  Now, by~\pref{game-ergodic-20} the mean payoff vector 
  $\lim_{k\to\infty} (T_s)^k(0)/k=\chi_s$
  of the game $\Gamma(r+g_s,P)$ exists and is constant.
  From~\eqref{boundts}, we deduce that  
  \[s\eta -C \leq \chi_s \leq s\eta +C\enspace.\]
  Since $\chi_s$ is a constant vector, we get that 
  $s(\max_{i\in S} \eta_i) -C\leq (\chi_s)_{j}\leq  s(\min_{i\in S} \eta_i) +C$
  for all $j\in S$. 
  Hence, $s(\max_{i\in S} \eta_i -\min_{i\in S} \eta_i) \leq 2C$, and since this
  inequality holds for all $s>0$, we deduce that 
  $\max_{i\in S} \eta_i -\min_{i\in S} \eta_i=0$.
  This implies that $\eta$ is a constant vector,
  hence any fixed point of the recession
  function $\widehat{T}$ is a constant vector, which shows
  Assertion~\pref{game-ergodic-10}.
  
  Note that we could have shown the direct implication 
  \pref{game-ergodic-2p}$\Rightarrow$\pref{game-ergodic-10}, by using 
  the implication \pref{realmean1}$\Rightarrow$\pref{realmean2} in Proposition~\ref{prop:GivenMeanPayoff}.
\end{proof}

\section{Characterization of ergodicity in terms of Galois connections}
\label{galois-section}

In this section, we shall fix a state space $S=[n]$,
and consider any payment-free Shapley operator $F$ defined over $S$,
without specifying the actions spaces $A_i$ and $B_i$ of the two players 
nor the transition probability $P$.
Indeed, the results of this section only use the fact that
$F:\R^n\to \R^n$ is order-preserving, additively homogeneous,
and positively homogeneous.

\subsection{Invariant faces of the hypercube}

We begin with an observation about the fixed points of 
a payment-free Shapley operator.
But first, let us fix some notation.
If $K$ is a subset of $S$, denote by $\indic_K$ the vector with entries $1$ on $K$ and $0$ on $\Cpt{K}$.

\begin{lemma}
  \label{lem:NontrivialFP}
  Let $F$ be a payment-free Shapley operator.
  If $u$ is a nontrivial fixed point of $F$ then, denoting by $I = \argmin u$ and $J = \argmax u$, we have
  \begin{align}
    F(\indic_\Cpt{I}) &\leq \indic_\Cpt{I} \enspace , \tag{H1} \label{eq:H1} \\
    \indic_J &\leq F(\indic_J) \enspace . \tag{H2} \label{eq:H2}
  \end{align}
\end{lemma}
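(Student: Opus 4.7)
The plan is to exploit the three key properties of a payment-free Shapley operator $F$ (order preservation, additive homogeneity, and positive homogeneity) by first normalizing the nontrivial fixed point $u$, then producing scalar multiples of $\indic_{\Cpt{I}}$ and $\indic_{\Cpt{J}}$ that sandwich $u$, and finally pushing these inequalities through $F$.

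First I would reduce to the case $\min u=0$ and $\max u=1$. Additive homogeneity allows me to replace $u$ by $u-m\unit$ (where $m=\min u$), which is still a fixed point; positive homogeneity then allows me to divide by $M-m>0$, where $M=\max u$. Since $u$ is nontrivial, we have $M-m>0$ so both operations are legitimate, and the normalized fixed point satisfies $u\in[0,1]^n$, $u_i=0$ for $i\in I$, $u_j=1$ for $j\in J$, and $0<u_i<1$ for $i\notin I\cup J$. Both sets $I$ and $J$ are nonempty, and so are $\Cpt{I}$ and $\Cpt{J}$ (again by nontriviality).

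Next I would set $\delta:=\min_{i\in\Cpt{I}}u_i>0$ and $\varepsilon:=\min_{i\in\Cpt{J}}(1-u_i)>0$. An entrywise check gives the two sandwich inequalities
\begin{equation*}
\delta\,\indic_{\Cpt{I}}\;\leq\; u \;\leq\; (1-\varepsilon)\unit+\varepsilon\,\indic_J \enspace .
\end{equation*}
Applying $F$ to the left inequality, then using monotonicity, positive homogeneity and $F(u)=u$, yields $\delta F(\indic_{\Cpt{I}})\leq u$; restricting to indices $i\in I$, where $u_i=0$, gives $F(\indic_{\Cpt{I}})(i)\leq 0=\indic_{\Cpt{I}}(i)$. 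On $\Cpt{I}$ the bound $\indic_{\Cpt{I}}\leq\unit$ together with monotonicity and $F(\unit)=\unit$ gives $F(\indic_{\Cpt{I}})(i)\leq 1=\indic_{\Cpt{I}}(i)$, thereby establishing~\eqref{eq:H1}.

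For~\eqref{eq:H2} I would apply $F$ to the right sandwich, using additive and positive homogeneity to obtain $u\leq(1-\varepsilon)\unit+\varepsilon F(\indic_J)$, i.e.\ $\varepsilon F(\indic_J)\geq u-(1-\varepsilon)\unit$. On $J$, where $u_j=1$, this gives $F(\indic_J)(j)\geq 1=\indic_J(j)$. On $\Cpt{J}$, monotonicity applied to $\indic_J\geq 0$, together with $F(0)=0$ (a consequence of positive homogeneity), gives $F(\indic_J)(i)\geq 0=\indic_J(i)$, completing the argument. There is no real obstacle here; the only point requiring mild care is ensuring that $\delta$ and $\varepsilon$ are strictly positive, which is exactly where the nontriviality of $u$ is used.
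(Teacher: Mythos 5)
Your proof is correct and follows essentially the same route as the paper: both arguments normalize the nontrivial fixed point, sandwich it by (affine images of) the indicator vectors, and push the inequalities through $F$ using monotonicity together with additive and positive homogeneity, finishing with $F(\unit)=\unit$ and $F(0)=0$ on the complementary indices. The only cosmetic difference is that the paper absorbs your scalars $\delta$ and $\varepsilon$ into the normalization (choosing $u$ so that $\indic_{\Cpt{I}}\leq u$ directly), whereas you keep them explicit.
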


\begin{proof}
  By the additive and positive homogeneity of $F$, we may assume
  that $\indic_\Cpt{I} \leq u$ and that $\min_{s \in S} u_s = 0$.
  Hence, by the monotonicity of $F$, we get that $F(\indic_\Cpt{I}) \leq u$.
  In particular, we have $[F(\indic_\Cpt{I})]_i \leq 0$ for every $i \in I$.
  Since $\indic_\Cpt{I} \leq \unit$, we also have $F(\indic_\Cpt{I}) \leq \unit$ (recall that any trivial vector is a fixed point of $F$).
  It follows that $F(\indic_\Cpt{I}) \leq \indic_\Cpt{I}$.
  
  We show the second inequality using the same arguments (but this time assuming
  that $u \leq \indic_J$ and $\max_{s \in S} u_s = 1$).
\end{proof}

\begin{remark}
  \label{rem:H1H2}
  Note that the hypercube $[0,1]^n$ is invariant by $F$.
  Hence,~\eqref{eq:H1} is equivalent to the fact that $[F(\indic_\Cpt{I})]_i = 0$ for every $i \in I$.
  Likewise,~\eqref{eq:H2} is equivalent to $[F(\indic_J)]_j = 1$ for every $j \in J$.
\end{remark}

\begin{remark}
  Conditions~\eqref{eq:H1} and~\eqref{eq:H2} are dual.
  Indeed, introduce $\widetilde F$ the conjugate operator of $F$ defined by $\widetilde{F}(x):=-F(-x)$. 
  Then, $\widetilde F$ is a payment-free Shapley operator (obtained from $F$ by changing $\min$ to $\max$ and vice versa).
  Moreover, $\widetilde{F}(x)=\unit-F(\unit-x)$, so $\widetilde{F}(\indic_I)=\unit-F(\indic_\Cpt{I})$ for all subsets $I$ of $S$, and condition~\eqref{eq:H1} holds for $F$ and $I$ if, and only if, condition~\eqref{eq:H2} holds for $\widetilde F$ and $I$.
  
  Furthermore, if $u$ is a nontrivial fixed point of $F$, then the vector $\tilde u := \unit - u$ is a nontrivial fixed point of $\widetilde F$, verifying $\argmax \tilde u = \argmin u$.
\end{remark}

Conditions~\eqref{eq:H1} and~\eqref{eq:H2} can be stated in geometric terms.
Given two subsets $I$ and $J$ of $S$, denote by $C_I^- := \{x \in [0,1]^n \mid \forall i \in I,~x_i = 0\}$ and by $C_J^+ := \{x \in [0,1]^n \mid \forall j \in J,~x_j = 1\}$ two faces of the hypercube.
We shall call them lower and upper faces, respectively.
Note that they can alternatively be defined by $C_I^- = \{x \in [0,1]^n \mid x \leq \indic_\Cpt{I} \}$ and $C_J^+ = \{x \in [0,1]^n \mid x \geq \indic_J \}$.
Hence, by the monotonicity of $F$, we easily get the following.
\begin{proposition}
  \label{prop:Invariance}
  Let $F$ be a payment-free Shapley operator.
  Let $I$ and $J$ be two subsets of $S$. Then
  \begin{align*}
    \eqref{eq:H1} \enspace \Leftrightarrow \enspace F(C_I^-) \subset C_I^-,\\
    \eqref{eq:H2} \enspace \Leftrightarrow \enspace F(C_J^+) \subset C_J^+.
  \end{align*}
\end{proposition}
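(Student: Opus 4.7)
The plan is to prove both equivalences by the same short two-step argument, relying only on the monotonicity of $F$, its preservation of the hypercube $[0,1]^n$, and the alternative descriptions $C_I^- = \{x\in [0,1]^n \mid x \leq \indic_{\Cpt{I}}\}$ and $C_J^+ = \{x\in [0,1]^n \mid x\geq \indic_J\}$ already recorded just above the statement.

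As a preliminary, I would first verify that $F([0,1]^n)\subset [0,1]^n$. Since $F$ is positively homogeneous and additively homogeneous, it fixes every trivial vector; in particular $F(0)=0$ and $F(\unit)=\unit$. Then monotonicity gives, for any $x\in [0,1]^n$, that $0=F(0)\leq F(x)\leq F(\unit)=\unit$. This is what lets us conclude, after applying $F$, that we stay inside the hypercube.

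For the first equivalence, the implication $F(C_I^-)\subset C_I^- \Rightarrow \eqref{eq:H1}$ is immediate by specializing to the vertex $\indic_{\Cpt{I}}$, which lies in $C_I^-$ and is sent to a vector bounded above by $\indic_{\Cpt{I}}$. For the converse, assume \eqref{eq:H1} and take any $x\in C_I^-$, so that $x\leq \indic_{\Cpt{I}}$; by monotonicity $F(x)\leq F(\indic_{\Cpt{I}})\leq \indic_{\Cpt{I}}$, and by the preliminary observation $F(x)\in [0,1]^n$. Hence $F(x)\in C_I^-$. The second equivalence is proved by the dual argument: specialize at $\indic_J$ for one direction, and for the other use that $x\geq \indic_J$ gives $F(x)\geq F(\indic_J)\geq \indic_J$ together with $F(x)\in [0,1]^n$.

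There is really no serious obstacle here; the argument is essentially a translation between the two characterizations of the faces and a single application of monotonicity. The only point that requires a moment of care is ensuring that the images remain inside $[0,1]^n$, which is precisely the role of the preliminary observation and the reason that the definitions of $C_I^-$ and $C_J^+$ include the constraint $x\in [0,1]^n$ in the first place.
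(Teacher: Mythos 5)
Your proof is correct and follows the same route the paper intends: the paper simply remarks that the equivalences ``easily'' follow from the monotonicity of $F$ and the descriptions $C_I^- = \{x \in [0,1]^n \mid x \leq \indic_\Cpt{I}\}$ and $C_J^+ = \{x \in [0,1]^n \mid x \geq \indic_J\}$, together with the invariance of $[0,1]^n$ under $F$ (noted in Remark~\ref{rem:H1H2}), which is exactly the argument you spell out.
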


Conditions~\eqref{eq:H1} and~\eqref{eq:H2} are thus equivalent to the invariance of faces of the hypercube.

\subsection{Galois connection}
\label{section-galois}

We first recall the definition of a Galois connection between lattices, as introduced by Birkhoff~\cite{Bir95} for lattices of subsets and then generalized by Ore~\cite{Ore44}.
Let $(A,\prec_A)$ and $(B,\prec_B)$ be two partially ordered sets and let $\varphi : A \rightarrow B$ and $\gamma : B \rightarrow A$.
The map $\varphi$ is said to be antitone if $a \prec_A a'$ implies $\varphi(a')\prec_B \varphi(a)$.
The pair $(\varphi,\gamma)$ is a Galois connection between $A$ and $B$ if one of the following equivalent assertions is verified:
\begin{subequations}
  \begin{align}
    & \operatorname{id}_A \prec_A \gamma \circ \varphi, \quad \operatorname{id}_B \prec_B \varphi \circ \gamma, \quad \text{$\varphi$ and $\gamma$ are antitone}, \label{eq:Gal1} \\
    & \forall a \in A,~\forall b \in B, \quad \big(~b \prec_B \varphi(a) \Leftrightarrow a \prec_A \gamma(b)~\big) \enspace ,  \label{eq:Gal2} \\
    & \forall b \in B, \quad \gamma(b) = \text{max}_A \{ a;~b \prec_B \varphi(a) \} \enspace , \label{eq:Gal3} \\
    & \forall a \in A, \quad \varphi(a) = \text{max}_B \{ b;~a \prec_A \gamma(b)\} \enspace , \label{eq:Gal4}
  \end{align}
\end{subequations}
where, given a partially ordered set $(E,\prec_E)$, $\operatorname{id}_E$ is the identity map over $E$ and $\text{max}_E X$ states for the maximum of the subset $X \subset E$ with respect to the partial order $\prec_E$.

If $(\varphi, \gamma)$ is a Galois connection between $A$ and $B$, then $(\gamma, \varphi)$ is a Galois connection between $B$ and $A$, and according to~\eqref{eq:Gal3} (resp.\ to~\eqref{eq:Gal4}), $\gamma$ (resp.\ $\varphi$) is uniquely determined by $\varphi$ (resp.\ $\gamma$).
Denote by $\varphi^\star := \gamma$ and likewise by $\gamma^\star := \varphi$.
These maps have the following properties:
\begin{gather*}
  b = \varphi(a) \quad \Rightarrow \quad \varphi^\star(b) = \text{max}_A \{ a;~b = \varphi(a) \} \enspace , \\
  \varphi \circ \varphi^\star \circ \varphi = \varphi \quad \text{and} \quad \varphi^\star \circ \varphi \circ \varphi^\star = \varphi^\star \enspace , \\
  \big( \exists a \in A, \quad b = \varphi (a) \big) \quad \Leftrightarrow \quad \varphi \circ \varphi^\star (b) = b \enspace , \\
  ( \varphi^\star )^\star = \varphi \enspace.
\end{gather*}
We say that an element $a \in A$ (resp.\ $b \in B$) is closed with respect to the Galois connection $(\varphi,\varphi^\star)$ (resp.\ $(\varphi^\star,\varphi)$)
if $a = \varphi^\star \circ \varphi(a)$ (resp.\ $b = \varphi \circ \varphi^\star(b)$).
We can show that the set of closed elements in $A$ with respect to $(\varphi,\varphi^\star)$ is $\bar{A} := \varphi^\star(B)$ and that the set of closed elements in $B$ with respect to $(\varphi^\star,\varphi)$ is $\bar{B} := \varphi(A)$.
Then, $\varphi$ is an isomorphism from $\bar{A}$ to $\bar{B}$, and its inverse is $\varphi^\star$

Given a payment-free Shapley operator $F$, we denote by $\F^-$ (resp.\ $\F^+$) the families of subsets of $S$ verifying~\eqref{eq:H1} (resp.~\eqref{eq:H2}):
\begin{align*}
  \F^- &:= \big\{ I \subset S \mid F(\indic_\Cpt{I}) \leq \indic_\Cpt{I} \big\} \enspace ,\\
  \F^+ &:= \big\{ J \subset S \mid \indic_J \leq F(\indic_J) \big\} \enspace . 
\end{align*}
These families $\F^-$ and $\F^+$ are lattices of subsets with respect to the inclusion partial order. Indeed, since
$F$ is order preserving, for all $I_1,I_2\in \F^-$, we have
\begin{align*}
  F(\indic_{S\setminus (I_1\cup I_2)})&=F\big(\inf(\indic_{S\setminus I_1},\indic_{S\setminus I_2})\big)\\
  &\leq
  \inf\big(F(\indic_{S\setminus I_1}),F(\indic_{S\setminus I_2})\big) \\
  &\leq
  \inf(\indic_{S\setminus I_1},\indic_{S\setminus I_2}) 
  = \indic_{S\setminus (I_1\cup I_2)},
\end{align*}
so that $I_1\cup I_2\in \F^-$. This implies that the supremum
of two sets in $\F^-$ coincides with their supremum in the
powerset lattice $\mathcal{P}(S)$ of $S$, i.e., the union
$I_1\cup I_2$. Hence, $\F^-$ is
a sub-supsemilattice of $\mathcal{P}(S)$. Then,
since $\F^-$ has a bottom element (the empty set) 
and since it is a finite ordered set, it is
automatically an inf-semilattice: the infimum of two sets $I_1,I_2\in\F^-$
is given by $\cup_{I_3\in \F^-,I_3\subset I_1,I_3\subset I_2} I_3$.
Note that the latter infimum may differ from the infimum in $\mathcal{P}(S)$
(the intersection). The lattice $\F^+$ has dual properties.
According to the geometric interpretation, the two lattices $\F^-$ and $\F^+$ can be identified with the families of lower and upper invariant faces of the hypercube, respectively.
Note that $\F^-$ and $\F^+$ both contain
$\emptyset$ and $S$.

Given $I \in \F^-$, we are interested in the subsets $J \in \F^+$ satisfying $I \cap J = \emptyset$ (see Lemma~\ref{lem:NontrivialFP}). We shall
consider in particular
the greatest subset $J$ with the latter property.
Vice versa, starting from a subset $J$, we may consider the greatest subset 
$I$ with the same property.
In geometric terms, to each lower invariant face $C_I^-$ of $[0,1]^n$ we associate the smallest upper invariant face $C_J^+$ with nonempty intersection with
$C_I^-$.
This defines a Galois connection between the lattices $\F^-$ and $\F^+$.

Let $(\Phi,\Phi^\star)$ be the pair of functions from $\F^-$ (resp.\ $\F^+$) to $\F^+$ (resp.\ $\F^-$), that have just been introduced.
Formally, they are defined for every $I \in \F^-$ and $J \in \F^+$ by:
\begin{equation}
  \label{eq:GaloisConnection}
  \Phi(I) := \bigcup_{J \in \F^+, \; I \cap J = \emptyset} J \qquad \text{and} \qquad \Phi^\star(J) := \bigcup_{I \in \F^-, \; I \cap J = \emptyset} I \enspace .
\end{equation}
It follows from this definition that $\Phi$ and $\Phi^\star$ are antitone, and that $I \subset \Phi^\star \circ \Phi(I)$ and $J \subset \Phi \circ \Phi^\star(J)$.
Hence condition~\eqref{eq:Gal1} is  satisfied for the pair $(\Phi,\Phi^\star)$ which proves that it is a Galois connection between the lattices of subsets $\F^-$ and $\F^+$.

We now explore some properties of this Galois connection.
By a simple application of the definitions, we can first complete Lemma~\ref{lem:NontrivialFP}.
\begin{lemma}
  \label{lem:NontrivialFPandGaloisConnection}
  Let $F$ be a payment-free Shapley operator.
  If $u$ is a nontrivial fixed point of $F$, then $\argmin u \in \F^-$ and $\argmax u \in \F^+$.
  Furthermore, we have $\argmax u \subset \Phi(\argmin u)$ and $\argmin u \subset \Phi^\star(\argmax u)$.
  \qed
\end{lemma}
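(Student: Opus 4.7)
The plan is to show that this lemma follows almost immediately from Lemma~\ref{lem:NontrivialFP} combined with the definitions of $\F^-$, $\F^+$, $\Phi$, and $\Phi^\star$. So the proof will really be a book-keeping exercise checking that the hypotheses of the Galois connection apply.

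First, I would set $I := \argmin u$ and $J := \argmax u$. Lemma~\ref{lem:NontrivialFP} gives exactly conditions~\eqref{eq:H1} and~\eqref{eq:H2}, namely $F(\indic_{\Cpt{I}}) \leq \indic_{\Cpt{I}}$ and $\indic_J \leq F(\indic_J)$. By the definitions of the lattices $\F^-$ and $\F^+$, this means precisely that $I \in \F^-$ and $J \in \F^+$, which gives the first part of the statement.

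Next, I would observe that since $u$ is a nontrivial fixed point it is not constant, so $\min_{s \in S} u_s < \max_{s \in S} u_s$, and hence $I \cap J = \emptyset$. Having $I \in \F^-$, $J \in \F^+$ and $I \cap J = \emptyset$, the defining formula~\eqref{eq:GaloisConnection} of $\Phi$ gives $J \subset \bigcup_{J' \in \F^+,\; I \cap J' = \emptyset} J' = \Phi(I)$, that is, $\argmax u \subset \Phi(\argmin u)$. The symmetric argument (swapping the roles of the two lattices in~\eqref{eq:GaloisConnection}) yields $I \subset \Phi^\star(J)$, i.e.\ $\argmin u \subset \Phi^\star(\argmax u)$.

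There is no real obstacle here: everything reduces to invoking the previous lemma and unwinding the definition of the Galois connection. The only point worth stating explicitly in the write-up is the disjointness $\argmin u \cap \argmax u = \emptyset$, which is where the nontriviality hypothesis is used.
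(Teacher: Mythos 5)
Your argument is correct and matches the paper's intent exactly: the paper states this lemma as ``a simple application of the definitions'' following Lemma~\ref{lem:NontrivialFP} and omits the proof, and your write-up supplies precisely that bookkeeping, including the one substantive point (disjointness of $\argmin u$ and $\argmax u$ from nontriviality) needed to invoke the defining formula~\eqref{eq:GaloisConnection}.
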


For $x \in \R^n$, we shall use the notation
\[ F^\omega(x) := \lim_{k \to \infty} F^k(x) 
\]
as soon as the latter limit exists. This is the case in particular when $F(x)\leq x$ or $x\leq F(x)$.
Indeed, since $F$ is order-preserving, the former (resp.\ latter) 
inequality implies
that the sequence $(F^k(x))_{k\geq 0}$ is nonincreasing (resp.\ nondecreasing). Moreover,
since $F$ is nonexpansive and has a fixed point (namely, $0$),
the sequence $(F^k(x))_{k\geq 0}$ is bounded, so that it converges as $k$ tends to infinity.
Moreover, $F^\omega(x)$ is necessarily a fixed point of $F$. 

\begin{lemma}
  \label{lem:ArgmaxPhi}
  Let $F$ be a payment-free Shapley operator.
  Let $I \in \F^-$ (resp.\ $J \in \F^+$) such that $\Phi(I) \neq \emptyset$ (resp.\ $\Phi^\star(J) \neq \emptyset$).
  Then, $\argmax F^\omega(\indic_\Cpt{I}) = \Phi(I)$ (resp.\ $\argmin F^\omega(\indic_J) = \Phi^\star(J)$).
  
  Furthermore, if $I$ (resp.\ $J$) is closed with respect to the Galois connection $(\Phi,\Phi^\star)$ (resp.\ $(\Phi^\star,\Phi)$), then $\argmin F^\omega(\indic_\Cpt{I}) = I$ (resp.\ $\argmax F^\omega(\indic_J) = J$).
\end{lemma}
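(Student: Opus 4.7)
The plan is to analyze the fixed point $u:=F^\omega(\indic_\Cpt{I})$. The ``resp.''\ assertions for $J\in\F^+$ follow either by a symmetric argument or, more cleanly, by applying the $I$-case to the conjugate operator $\widetilde F(x):=-F(-x)$ via the identity $\widetilde F^k(\indic_\Cpt{J})=\unit-F^k(\indic_J)$, which swaps the roles of $\F^\pm$ and of $\Phi,\Phi^\star$; so I focus throughout on the $I\in\F^-$ statements.

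\emph{Construction of $u$ and the inclusion $\Phi(I)\subset \argmax u$.} Since $I\in\F^-$ means $F(\indic_\Cpt{I})\leq \indic_\Cpt{I}$, monotonicity makes $(F^k(\indic_\Cpt{I}))_k$ nonincreasing and bounded below by $F^k(0)=0$, hence convergent, and nonexpansiveness makes its limit $u$ a fixed point of $F$ satisfying $0\leq u\leq \indic_\Cpt{I}$ (in particular $u_i=0$ on $I$). For the inclusion, pick any $J\in\F^+$ with $I\cap J=\emptyset$: the relations $\indic_J\leq F(\indic_J)$ and $\indic_J\leq \indic_\Cpt{I}$ together with monotonicity make $(F^k(\indic_J))_k$ nondecreasing and pointwise dominated by $(F^k(\indic_\Cpt{I}))_k$, so the limit $v_J:=F^\omega(\indic_J)$ is a fixed point with $\indic_J\leq v_J\leq u\leq \unit$, which forces $u_j=1$ on $J$; ranging over all such $J$ gives $u_j=1$ on $\Phi(I)$, and when $\Phi(I)\neq \emptyset$ this means $\max u=1$ and $\Phi(I)\subset \argmax u$. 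For the reverse inclusion, the case $I=\emptyset$ is trivial since then $u=\unit$ and $\Phi(\emptyset)=S$; otherwise $u$ attains both $0$ and $1$ and is therefore nontrivial, so Lemma~\ref{lem:NontrivialFPandGaloisConnection} applies and gives $\argmax u\subset \Phi(\argmin u)$, and combining $I\subset \argmin u$ with the antitonicity of $\Phi$ yields $\Phi(\argmin u)\subset \Phi(I)$, hence $\argmax u\subset \Phi(I)$.

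\emph{The closed case, and the main difficulty.} Assume further that $I=\Phi^\star\circ\Phi(I)$ and set $I':=\argmin u$. Lemma~\ref{lem:NontrivialFPandGaloisConnection} gives $I'\in\F^-$, while the previous step gives $I\subset I'$ and $I'\cap \Phi(I)=I'\cap \argmax u=\emptyset$. By the very definition of $\Phi^\star$ as the union of all $\F^-$-subsets disjoint from its argument, $I'\subset \Phi^\star(\Phi(I))=I$, whence $I=I'$. The only conceptual obstacle in the whole argument is to guarantee that $u$ is nontrivial before invoking Lemma~\ref{lem:NontrivialFPandGaloisConnection}, which forces one to combine the hypothesis $\Phi(I)\neq \emptyset$ (producing entries equal to $1$) with $I\neq \emptyset$ (producing entries equal to $0$), the boundary case $I=\emptyset$ being handled by inspection; the remainder is routine exploitation of the monotonicity of $F$ and of the Galois-connection identities.
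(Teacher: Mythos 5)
Your proof is correct and follows essentially the same route as the paper's: construct $u=F^\omega(\indic_{\Cpt{I}})$ as a monotone limit, sandwich it between $\indic_{\Phi(I)}$ and $\indic_{\Cpt{I}}$ to get $I\subset\argmin u$ and $\Phi(I)\subset\argmax u$, invoke Lemma~\ref{lem:NontrivialFPandGaloisConnection} for the reverse inclusion, and use the Galois identities for the closed case. Your only (harmless) deviations are to obtain $\indic_{\Phi(I)}\leq u$ by ranging over all $J\in\F^+$ disjoint from $I$ rather than using directly that $\Phi(I)\in\F^+$, and to make explicit the nontriviality check before applying Lemma~\ref{lem:NontrivialFPandGaloisConnection}, which the paper leaves implicit.
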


\begin{proof}
  Firstly, note that since $F(\indic_\Cpt{I}) \leq \indic_\Cpt{I}$, 
  the sequence $(F^k(\indic_\Cpt{I}))_{k \geq 0}$
  is nonincreasing and so 
  the limit $u:=F^\omega(\indic_\Cpt{I})$ does exist.
  Since $F$ leaves $[0,1]^n$ invariant, we have $u\in [0,1]^n$.
  
  Secondly, by definition of the Galois connection, we have $\indic_{\Phi(I)} \leq \indic_\Cpt{I}$.
  Using the monotonicity of $F$ and the characterization of $\F^-$ and $\F^+$, we get that $\indic_{\Phi(I)} \leq F(\indic_{\Phi(I)}) \leq F(\indic_\Cpt{I}) \leq \indic_\Cpt{I}$.
  Since $F$ is monotone, we get that $\indic_{\Phi(I)} \leq 
  F^k(\indic_{\Phi(I)}) \leq F^k(\indic_\Cpt{I}) \leq \indic_\Cpt{I}$.
  It follows that $\indic_{\Phi(I)} \leq u \leq \indic_\Cpt{I}$ and we deduce that $I \subset \argmin u$ and $\Phi(I) \subset \argmax u$.
  Since $\Phi$ is antitone, we have $\Phi(\argmin u) \subset \Phi(I) \subset \argmax u$.
  
  The vector $u$ being a fixed point of $F$, we know from Lemma~\ref{lem:NontrivialFPandGaloisConnection} that $\argmin u \in \F^-$, $\argmax u \in \F^+$, and $\argmax u \subset \Phi(\argmin u)$.
  Hence, we have by the previous inclusions, $\Phi(\argmin u) = \Phi(I) = \argmax u$.
  
  Suppose now that $I$ is closed with respect to the Galois connection.
  This means that $\Phi^\star(\Phi(I)) = I$.
  Then, from the previous equalities, we get that $\Phi^\star \circ \Phi(\argmin u) = I$.
  This implies that $\argmin u \subset I$ and since we already know that $I \subset \argmin u$, we can conclude that $I = \argmin u$.
  
  The analogous results for $J \in \F^+$ follow by duality.
\end{proof}

We say that a subset of states is {\em proper} if it differs from the empty set and from the whole set of states.
We say that $I,J \subset S$ are {\em conjugate subsets of states} with respect to the Galois connection $(\Phi,\Phi^\star)$ if $I \in \F^- \setminus \emptyset$, $J \in \F^+ \setminus \emptyset$ and if $J = \Phi(I)$ and $I = \Phi^\star(J)$.  
\begin{theorem}
  \label{thm:Galois}
  Consider a payment-free Shapley operator $F$.
  The following assertions are equivalent:
  \begin{enumerate}
    \item $F$ has a nontrivial fixed point;\label{i-thm-Galois}
    \item there exist non empty disjoint subsets $I,J\subset S$ such that $I \in \F^-$ and $J \in \F^+$;\label{ii-thm-Galois}
    \item there exists a proper subset of states $I \in \F^-$ such that $\Phi(I)\neq \emptyset$;\label{iii-thm-Galois}
    \item there exists a proper subset of states $J \in \F^+$ such that $\Phi^\star(J)\neq \emptyset$;\label{iv-thm-Galois}
    \item there exists a proper subset of states that is closed with respect to the Galois connection $(\Phi,\Phi^\star)$ or $(\Phi^\star,\Phi)$;\label{v-thm-Galois}
    \item there exists a pair of conjugate subsets of states with respect to the Galois connection $(\Phi,\Phi^\star)$.\label{vi-thm-Galois}
  \end{enumerate}
\end{theorem}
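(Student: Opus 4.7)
The plan is to establish the cyclic chain
\pref{i-thm-Galois} $\Rightarrow$ \pref{ii-thm-Galois} $\Rightarrow$ \pref{iii-thm-Galois}, \pref{iv-thm-Galois} $\Rightarrow$ \pref{v-thm-Galois} $\Rightarrow$ \pref{vi-thm-Galois} $\Rightarrow$ \pref{i-thm-Galois}, exploiting the fact that the non-trivial work (constructing a fixed point from a Galois-closed set) has already been packaged into Lemma~\ref{lem:ArgmaxPhi}, while the remaining steps are essentially Galois-connection bookkeeping.

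For \pref{i-thm-Galois} $\Rightarrow$ \pref{ii-thm-Galois}, given a non-trivial fixed point $u$, I take $I := \argmin u$ and $J := \argmax u$; these are non-empty and disjoint because $u$ is not constant, and Lemma~\ref{lem:NontrivialFP} gives $I \in \F^-$ and $J \in \F^+$. For \pref{ii-thm-Galois} $\Rightarrow$ \pref{iii-thm-Galois} and \pref{ii-thm-Galois} $\Rightarrow$ \pref{iv-thm-Galois}, if $I,J$ are disjoint, non-empty with $I \in \F^-$, $J \in \F^+$, then $I$ is proper (it is non-empty and cannot equal $S$ since it misses the non-empty $J$) and $J \subset \Phi(I)$ by the definition of $\Phi$ in~\eqref{eq:GaloisConnection}, so $\Phi(I) \neq \emptyset$; the argument for \pref{iv-thm-Galois} is symmetric.

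For \pref{iii-thm-Galois} $\Rightarrow$ \pref{v-thm-Galois}, given a proper $I \in \F^-$ with $\Phi(I) \neq \emptyset$, I replace $I$ by its closure $\tilde I := \Phi^\star \circ \Phi(I)$, which contains $I$ (Galois property \eqref{eq:Gal1}), is disjoint from the non-empty $\Phi(I)$, and satisfies $\Phi^\star \circ \Phi(\tilde I) = \tilde I$ (using $\Phi^\star \circ \Phi \circ \Phi^\star = \Phi^\star$); thus $\tilde I$ is a proper closed element. The implication \pref{iv-thm-Galois} $\Rightarrow$ \pref{v-thm-Galois} is dual. For \pref{v-thm-Galois} $\Rightarrow$ \pref{vi-thm-Galois}, assume $I \in \F^-$ is proper and closed, i.e.\ $I = \Phi^\star \circ \Phi(I)$, and set $J := \Phi(I)$. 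The delicate point is to show that $J$ is itself proper: if $J = \emptyset$, then $\Phi^\star(J) = S$ (since every element of $\F^-$ is disjoint from $\emptyset$, and $S \in \F^-$), contradicting $I \neq S$; and if $J = S$, then $\Phi^\star(J) = \emptyset$, contradicting $I \neq \emptyset$. Moreover $\Phi \circ \Phi^\star(J) = \Phi(I) = J$, so $(I,J)$ is a pair of conjugate subsets.

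The closing implication \pref{vi-thm-Galois} $\Rightarrow$ \pref{i-thm-Galois} is the key analytic step and relies on Lemma~\ref{lem:ArgmaxPhi}: given conjugate $I,J$, set $u := F^\omega(\indic_{\Cpt{I}})$, which exists and is a fixed point of $F$ because $(F^k(\indic_{\Cpt{I}}))_k$ is a nonincreasing sequence in $[0,1]^n$. Since $I$ is closed and $\Phi(I) = J \neq \emptyset$, Lemma~\ref{lem:ArgmaxPhi} gives $\argmin u = I$ and $\argmax u = J$. Because $I$ and $J$ are non-empty and disjoint, $u$ takes at least two distinct values (explicitly, $0$ on $I$ and $1$ on $J$ since $\indic_J \leq u \leq \indic_{\Cpt{I}}$), hence is non-trivial. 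I expect the main obstacle to be precisely the propriety bookkeeping in \pref{v-thm-Galois} $\Rightarrow$ \pref{vi-thm-Galois}: one must rule out the degenerate cases $\Phi(I) = \emptyset$ and $\Phi(I) = S$ using only the hypothesis that $I$ is proper and closed, which amounts to the small but crucial computation that $\Phi^\star(\emptyset) = S$ and $\Phi^\star(S) = \emptyset$ in our lattices.
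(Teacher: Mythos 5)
Your proposal is correct and follows essentially the same route as the paper: the same use of Lemma~\ref{lem:NontrivialFP} for \pref{i-thm-Galois}$\Rightarrow$\pref{ii-thm-Galois}, the same Galois bookkeeping for the middle implications, and the same invocation of Lemma~\ref{lem:ArgmaxPhi} applied to $F^\omega(\indic_{\Cpt{I}})$ to close the cycle. The only differences are cosmetic --- you exhibit the closure $\Phi^\star\circ\Phi(I)$ as the witness for \pref{v-thm-Galois} where the paper uses $\Phi(I)$, and you route the final step through \pref{vi-thm-Galois} rather than directly from \pref{v-thm-Galois}; just note that in \pref{v-thm-Galois}$\Rightarrow$\pref{vi-thm-Galois} the dual case of a closed $J\in\F^+$ should be mentioned (it follows by the symmetric argument).
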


\begin{proof}
  \pref{i-thm-Galois}$\Rightarrow$\pref{ii-thm-Galois}: Assume that
  \pref{i-thm-Galois} holds and consider a nontrivial fixed point $u$ of $F$.
  Then denoting by $I:=\argmin u$ and by $J:=\argmax u$, we know, by Lemma~\ref{lem:NontrivialFP}, that $I \in \F^-$ and $J \in \F^+$.
  Since $u$ is nontrivial, we also have $I \cap J = \emptyset$,
  which shows \pref{ii-thm-Galois}.
  
  \noindent 
  \pref{ii-thm-Galois}$\Rightarrow$\pref{iii-thm-Galois} and \pref{ii-thm-Galois}$\Rightarrow$\pref{iv-thm-Galois}:
  If \pref{ii-thm-Galois} holds, then by definition of the Galois connection we have $J \subset \Phi(I)$ and $I \subset \Phi^\star(J)$,
  thus  $\Phi(I)\neq \emptyset$ and $\Phi^\star(J)\neq \emptyset$, which shows 
  both \pref{iii-thm-Galois} and \pref{iv-thm-Galois}.
  
  \noindent
  \pref{iii-thm-Galois}$\Rightarrow$\pref{v-thm-Galois}:
  Let $I$ be a subset as in \pref{iii-thm-Galois},
  that is $I \in \F^-$ is proper such that $\Phi(I)\neq \emptyset$.
  We cannot have $\Phi(I) = S$, since otherwise, this would implies that 
  $I \subset \Phi^\star(\Phi(I))=\emptyset$. Hence, $\Phi(I)$ 
  is proper. Moreover, we know that $\Phi(I)$ is closed with respect to the Galois connection $(\Phi^\star,\Phi)$, which shows (one case of) \pref{v-thm-Galois}.
  
  \noindent
  \pref{iv-thm-Galois}$\Rightarrow$\pref{v-thm-Galois}:
  Similarly if $J \in \F^+$ is proper such that $\Phi^\star(J)\neq \emptyset$,
  then $\Phi^\star(J)$ is proper and closed with respect to the Galois 
  connection $(\Phi,\Phi^\star)$, which shows \pref{v-thm-Galois}.
  
  \noindent 
  \pref{v-thm-Galois}$\Rightarrow$\pref{i-thm-Galois}:	
  Suppose for instance that $I \in \F^-$ is proper and closed with respect to the Galois connection $(\Phi,\Phi^\star)$.
  We have $\Phi(I) \neq \emptyset$, since otherwise $I = \Phi^\star(\Phi(I))=S$.
  By the first assertion of Lemma~\ref{lem:ArgmaxPhi},
  we get that $u=F^\omega(\indic_{\Cpt{I}})$ is a fixed point of $F$ with an
  argmax equal to $\Phi(I)$. By the second assertion of
  Lemma~\ref{lem:ArgmaxPhi}, we obtain that $\argmin u=I$.
  Since $I \cap \Phi(I) = \emptyset$ and neither $I$ nor $\Phi(I)$
  is empty, $u$ is a nontrivial fixed point of $F$, which shows
  \pref{i-thm-Galois}.
  The same conclusion holds if there is a proper subset of states $J \in \F^+$, closed with respect to the Galois connection $(\Phi^\star,\Phi)$.

  \noindent
  \pref{v-thm-Galois}$\Leftrightarrow$\pref{vi-thm-Galois}:
  This follows readily from the definition of conjugate subsets of states.
\end{proof}

\section{Ergodicity only depends on the support of the transition probability}
\label{compact-section}

Let us fix now a state space $S=[n]$.
Here, given the actions        
spaces $A_i$ and $B_i$ of the two players, and the transition probability $P$,
we consider the payment-free Shapley operator $F=T(0,P)$,
with $T$ as in~\eqref{defshapley}.

\subsection{Boolean abstractions}

We call {\em upper} and {\em lower Boolean abstractions} of 
the payment-free Shapley operator $F$, the operators respectively defined on $\{0,1\}^n$ by
\begin{align}
  [F^+(x)]_i & := \min_{a \in A_i} \max_{b \in B_i} \max_{j:(P_i^{a b})_j > 0} x_j, \quad i \in S \enspace , \label{defF+}\\
  [F^-(x)]_i & := \min_{a \in A_i} \max_{b \in B_i} \min_{j:(P_i^{a b})_j > 0} x_j, \quad i \in S \enspace . \label{defF-}
\end{align}
These Boolean operators can be extended to $\R^n$.
Then, we have $F^- \leq F \leq F^+$.

We now make some observations.
Firstly, the expressions of $F^+$ and $F^-$ involve the operators $\min$ and $\max$ (instead of $\inf$ and $\sup$).
This owes to the fact that the action spaces are nonempty, and the state space is finite, hence, given $x \in \R^n$, the $\min$ and $\max$ operations are applied to nonempty subsets of the finite set $\{x_i\}_{i \in S}$. 
Secondly, $F^+$ and $F^-$ are only determined by the support of the transition probability, that is the set of $(i,a,b,j)$ such that $(P_{i}^{a b})_j > 0$.
Finally, $\big(\widetilde{F}\big)^+ = \widetilde{(F^-)}$, recalling that $\widetilde F$ is the conjugate operator of $F$ defined by $\widetilde F(x)=\ -F(-x)$.

These Boolean operators are helpful to characterize the families $\F^-$ and $\F^+$ as well as the Galois connection $(\Phi, \Phi^\star$).
However,
we need to make the following assumption.
\begin{assumption}
  \label{StandardAssumptions}
  \begin{enumerate}
    \item[] 
    \item For every state $i \in S$, the action spaces $A_i$ and $B_i$ are nonempty compact sets;
    \item The transition probability $P$ is separately continuous, meaning that given $i \in S$ and $a \in A_i$ the function $b \in B_i \mapsto P_i^{a b} \in \Delta(S)$ is continuous, and given $i \in S$ and $b \in B_i$ the function $a \in A_i \mapsto P_i^{a b} \in \Delta(S)$ is also continuous.
  \end{enumerate}
\end{assumption}

This assumption implies in particular the existence of 
optimal policies for both players, a property which is 
used implicitly in the proof of the following result.

\begin{lemma}
  \label{lem:BooleanReformulation}
  Let $F$ be the payment-free Shapley operator
  associated with the actions spaces $A_i$ and $B_i$ of the two players, 
  and the transition probability $P$, and let $F^+$ and $F^-$
  be defined by~\eqref{defF+} and~\eqref{defF-} respectively.
  For all subsets $I$ and $J$ of $S$, consider the conditions:
  \begin{align}
    F^+(\indic_\Cpt{I}) &\leq \indic_\Cpt{I} \enspace , \tag{H1'} \label{eq:H1'} \\
    \indic_J &\leq F^-(\indic_J) \enspace . \tag{H2'} \label{eq:H2'}
  \end{align}
  We have \eqref{eq:H1'} $\Rightarrow$  \eqref{eq:H1} and
  \eqref{eq:H2'} $\Rightarrow$ \eqref{eq:H2}.
  Moreover, under Assumption~\ref{StandardAssumptions}, we have:
  \eqref{eq:H1} $\Rightarrow$  \eqref{eq:H1'} and
  \eqref{eq:H2} $\Rightarrow$ \eqref{eq:H2'}.
\end{lemma}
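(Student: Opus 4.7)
The plan is to exploit the sandwich $F^-\leq F\leq F^+$ for the easy direction, and to use Assumption~\ref{StandardAssumptions} to convert inf/sup over actions into achieved min/max over actions in the hard direction. The key algebraic identity underlying everything is that, for $p := P_i^{ab}\indic_{\Cpt{I}} \in [0,1]$, one has $\max_{j:(P_i^{ab})_j>0}(\indic_{\Cpt{I}})_j = \mathbf{1}_{p>0}$ and $\min_{j:(P_i^{ab})_j>0}(\indic_{\Cpt{I}})_j = \mathbf{1}_{p=1}$; in particular $\mathbf{1}_{p=1}\leq p\leq \mathbf{1}_{p>0}$, which is precisely the componentwise inequality $F^-\leq F\leq F^+$ when evaluated at $\indic_{\Cpt{I}}$ or $\indic_J$.

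For the easy implications, I will simply observe that $F^+(\indic_{\Cpt{I}})\leq \indic_{\Cpt{I}}$ forces $F(\indic_{\Cpt{I}})\leq F^+(\indic_{\Cpt{I}})\leq \indic_{\Cpt{I}}$, giving~\eqref{eq:H1}, and dually $\indic_J\leq F^-(\indic_J)\leq F(\indic_J)$ gives~\eqref{eq:H2}. No assumption is needed here.

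For the hard direction, assume~\eqref{eq:H1} and Assumption~\ref{StandardAssumptions}. By Remark~\ref{rem:H1H2}, \eqref{eq:H1} is equivalent to $[F(\indic_{\Cpt{I}})]_i=0$ for every $i\in I$, i.e.\ $\inf_{a\in A_i}\sup_{b\in B_i}P_i^{ab}\indic_{\Cpt{I}}=0$. Under the compactness and separate continuity hypotheses, the paper has already noted that the inf and the sup are both attained (this is what existence of optimal policies means for the one-step operator evaluated at a fixed vector). Hence there exists $a^\star\in A_i$ with $\sup_{b\in B_i}P_i^{a^\star b}\indic_{\Cpt{I}}=0$, which forces $P_i^{a^\star b}\indic_{\Cpt{I}}=0$ for every $b\in B_i$, i.e.\ the support of $P_i^{a^\star b}$ is contained in $I$. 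Then $\max_{j:(P_i^{a^\star b})_j>0}(\indic_{\Cpt{I}})_j=0$ for all $b$, so $[F^+(\indic_{\Cpt{I}})]_i\leq 0$, proving~\eqref{eq:H1'}. The implication~\eqref{eq:H2}$\Rightarrow$\eqref{eq:H2'} is the dual argument: from $[F(\indic_J)]_j=1$ for $j\in J$, compactness/continuity yields, for every $a\in A_j$, some $b^\star(a)\in B_j$ with $P_j^{ab^\star(a)}\indic_J=1$; by stochasticity the support of $P_j^{ab^\star(a)}$ lies in $J$, so $\min_{k:(P_j^{ab^\star(a)})_k>0}(\indic_J)_k=1$, hence $[F^-(\indic_J)]_j=1$.

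The only subtle point—and what I would flag as the conceptual core—is justifying that under Assumption~\ref{StandardAssumptions} the outer $\inf_{a\in A_i}$ is actually a $\min$. This is not automatic from separate continuity alone (the marginal $a\mapsto \sup_b P_i^{ab}\indic_{\Cpt{I}}$ need not be continuous), but it is granted by the existence-of-optimal-policies statement recorded in Section~\ref{sec-games-basic}, to which I would refer rather than reprove. Alternatively, one can invoke the duality $\widetilde{(F^-)}=(\widetilde F)^+$ noted just above to deduce~\eqref{eq:H2}$\Rightarrow$\eqref{eq:H2'} from~\eqref{eq:H1}$\Rightarrow$\eqref{eq:H1'}, avoiding the symmetric argument altogether.
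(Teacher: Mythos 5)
Your proof is correct and follows essentially the same route as the paper's: the sandwich $F^-\leq F\leq F^+$ for the easy implications, and the attainment of the inner $\sup_b$ and outer $\inf_a$ under Assumption~\ref{StandardAssumptions}, combined with the equivalences $P_i^{ab}x\leq 0\Leftrightarrow\max_{j:(P_i^{ab})_j>0}x_j=0$ and $P_i^{ab}x\geq 1\Leftrightarrow\min_{j:(P_i^{ab})_j>0}x_j=1$ on $[0,1]^n$, for the hard ones. The only difference is that where you defer the attainment of the outer infimum to the optimal-policies remark of Section~\ref{sec-games-basic}, the paper spells out the argument (the marginal $a\mapsto\max_b P_i^{ab}x$ is lower semicontinuous as a supremum of continuous maps, hence attains its minimum on the compact $A_i$) --- a point you correctly identify as the conceptual crux.
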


\begin{proof}
  Since $F^-\leq F\leq F^+$, the implications \eqref{eq:H1'} $\Rightarrow$ \eqref{eq:H1} and \eqref{eq:H2'} $\Rightarrow$ \eqref{eq:H2} are
  trivial. It remains to show the reverse of these implications
  under Assumption~\ref{StandardAssumptions}.
  
  Let us first make some observations.
  First, for all $x\in [0,1]^n$, $i\in S$, $a\in A_i$ and
  $b\in B_i$, we have
  \begin{equation}\label{equiv0} P_i^{a b} x \leq 0\Leftrightarrow
    (P_i^{a b})_j = 0 \; \text{or} \; x_j = 0\quad \forall j \in S
    \Leftrightarrow \max_{j:(P_{i}^{a b})_j > 0} x_j = 0\enspace,\end{equation}
  since all entries of $P_i^{a b}$ and $x$ are nonnegative.
  Similarly
  \begin{equation}\label{equiv1} P_i^{a b} x \geq 1\Leftrightarrow
    \min_{j:(P_{i}^{a b})_j > 0} x_j = 1\enspace,\end{equation}
  since all entries of $x$ are less or equal to $1$,
  and  $P_i^{a b} x= 1-P_i^{a b} (1-x)$.
  
  Next, for all $x\in\R^n$, by Assumption~\ref{StandardAssumptions}, for all $i \in S$ and $a \in A_i$
  there exists $b\in B_i$ depending on $i$ and $a$ such that
  $ P^{ab}_i x= \max_{b'\in B_i} P^{ab'}_i x$ (the supremum is thus a maximum).
  Assumption~\ref{StandardAssumptions} also implies
  that, for all $i\in S$ and $b\in B_i$, the map 
  $A_i \to \R, \; a\mapsto  P^{ab}_i x$ is continuous.
  Since the supremum of continuous maps is lower 
  semicontinuous, we get that for all $i\in S$, the map 
  $A_i \to \R, \; a \mapsto  \max_{b\in B_i} P^{ab}_i x$
  is  lower semicontinuous.
  Since $A_i$ is compact, this implies that, for all $i\in S$, there exists 
  $a\in A_i$ such that
  $\max_{b'\in B_i} P^{ab'}_i x= \min_{a'\in A_i} \max_{b'\in B_i} P^{a'b'}_ix=[F(x)]_i$.
  
  Let us now show \eqref{eq:H1}$\Rightarrow$\eqref{eq:H1'}.
  Assume \eqref{eq:H1}, that is $F(\indic_\Cpt{I}) \leq \indic_\Cpt{I}$.
  This implies that $[F(\indic_\Cpt{I})]_i \leq 0$ for all $i\in I$. 
  By the last observation above, given $i\in I$, 
  there exists $a\in A_i$ such that
  $\max_{b\in B_i}  (P^{ab}_i \indic_\Cpt{I})=[F(\indic_\Cpt{I})]_i\leq 0$.
  Then, for all $b\in B_i$, $P^{ab}_i \indic_\Cpt{I}\leq 0$,
  which implies, by~\eqref{equiv0}, 
  that $ \max_{j:(P_{i}^{a b})_j > 0} [\indic_\Cpt{I}]_i = 0$.
  Since this last equality holds for some $a\in A_i$ and all $b\in B_i$, we 
  deduce that
  \[ [F^+(\indic_\Cpt{I})]_i=\min_{a \in A_i} \max_{b \in B_i} \max_{j: (P_{i}^{a b})_j > 0} [\indic_\Cpt{I}]_j =0,\quad \text{for all}\;i \in I\enspace,\]
  hence $F^+(\indic_\Cpt{I}) \leq \indic_\Cpt{I}$.
  
  With similar arguments, we show that \eqref{eq:H2}$\Rightarrow$\eqref{eq:H2'}.
\end{proof}

In the sequel, we shall also consider the sets 
$\F'^-$ and $\F'^+$ defined like $\F^-$ and $\F^+$, but with 
the conditions~\eqref{eq:H1'} and \eqref{eq:H2'}
instead of the conditions~\eqref{eq:H1} and \eqref{eq:H2} respectively:
\begin{align}
  \label{defcalF'-}
  \F'^- &:= \big\{ I \subset S \mid F^+(\indic_\Cpt{I}) \leq \indic_\Cpt{I} \big\},\\
  \label{defcalF'+}
  \F'^+ &:= \big\{ J \subset S \mid \indic_J \leq F^-(\indic_J) \big\}.
\end{align}
Moreover, we shall denote by $\Phi'$ and $\Phi'^*$ the maps
defined by~\eqref{eq:GaloisConnection}
with $\F'^-$ and $\F'^+$ instead of $\F^-$ and $\F^+$  respectively.
Then, $(\Phi', \Phi'^\star)$ is also a Galois connection between the
lattices of subsets $\F'^-$ and $\F'^+$.

Remark that from the definitions, $\F'^-$ and $\F'^+$ and then $(\Phi', \Phi'^\star)$ only depend on the support of the transition probability P, i.e.\ the set of elements $(i,a,b,j)$ such that $i,j \in S$, $a \in A_i$, $b \in B_i$, and $(P_i^{a b})_j > 0$.
Furthermore, Lemma~\ref{lem:BooleanReformulation} shows that under
Assumption~\ref{StandardAssumptions}, the former new sets and maps 
coincide with the corresponding old ones.

\begin{corollary}
  \label{coro:StructuralGaloisConnection}
  Given the payment-free Shapley operator $F$ 
  associated with actions spaces $A_i$ and $B_i$ of the two players, 
  and a transition probability $P$, 
  the families $\F^+$ and $\F^-$, as well as the Galois connection $(\Phi, \Phi^\star)$, depend only on the support of the transition probability P, when Assumption~\ref{StandardAssumptions} holds.
\end{corollary}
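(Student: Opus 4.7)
The plan is to observe that this is essentially an immediate consequence of Lemma~\ref{lem:BooleanReformulation} combined with the definitions. The strategy is to show equality of $\F^-$ with $\F'^-$ and of $\F^+$ with $\F'^+$, and then to invoke the definition~\eqref{eq:GaloisConnection} to transfer this equality to the Galois connections.

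First, I would recall that by Lemma~\ref{lem:BooleanReformulation}, under Assumption~\ref{StandardAssumptions}, conditions \eqref{eq:H1} and~\eqref{eq:H1'} are equivalent for every $I \subset S$, and similarly \eqref{eq:H2} and~\eqref{eq:H2'} are equivalent for every $J \subset S$. Comparing the definitions of $\F^-,\F^+$ with those of $\F'^-,\F'^+$ in~\eqref{defcalF'-}--\eqref{defcalF'+}, this yields $\F^-=\F'^-$ and $\F^+=\F'^+$.

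Next, I would note that $F^+$ and $F^-$, as defined in~\eqref{defF+}--\eqref{defF-}, only involve the indices $j$ such that $(P_i^{ab})_j>0$, i.e., the support of the transition probability. Consequently, the families $\F'^-$ and $\F'^+$, defined through the Boolean abstractions $F^+$ and $F^-$, depend only on the support of $P$. Combined with the previous step, this shows that $\F^-$ and $\F^+$ depend only on the support of $P$ under Assumption~\ref{StandardAssumptions}.

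Finally, the Galois connection $(\Phi,\Phi^\star)$ is defined in~\eqref{eq:GaloisConnection} purely as a function of the two lattices $\F^-$ and $\F^+$ (via unions indexed by disjointness conditions on subsets of $S$). Since these two lattices depend only on the support of $P$, so does $(\Phi,\Phi^\star)$, which completes the proof. I do not anticipate any real obstacle here: the entire content of the corollary has already been done in Lemma~\ref{lem:BooleanReformulation}, and what remains is just tracing through the definitions.
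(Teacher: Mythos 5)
Your proof is correct and follows exactly the paper's route: the paper likewise derives the corollary by noting that $\F'^-$, $\F'^+$ and $(\Phi',\Phi'^\star)$ depend only on the support of $P$ by construction from the Boolean abstractions, and that Lemma~\ref{lem:BooleanReformulation} identifies them with $\F^-$, $\F^+$ and $(\Phi,\Phi^\star)$ under Assumption~\ref{StandardAssumptions}. Nothing is missing.
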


Using Theorem~\ref{thm:Galois} together with the previous result, we deduce the following one.
\begin{corollary}
  \label{coro:StructuralFixedPoints}
  Given the payment-free Shapley operator $F$ 
  associated with actions spaces $A_i$ and $B_i$ of the two players, 
  and a transition probability $P$, 
  such that Assumption~\ref{StandardAssumptions} holds,
  the property ``$F$ has only trivial fixed points''
  depends only on the support of the transition probability P.
\end{corollary}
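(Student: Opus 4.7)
The plan is to simply combine the two results immediately preceding the statement. Specifically, by Theorem~\ref{thm:Galois}, the property ``$F$ has only trivial fixed points'' is the negation of assertion~\pref{i-thm-Galois}, and is therefore equivalent to the negation of any of the equivalent assertions~\pref{ii-thm-Galois}--\pref{vi-thm-Galois}. I would single out, for concreteness, the negation of~\pref{ii-thm-Galois}: ``$F$ has only trivial fixed points'' if and only if for all non empty disjoint subsets $I, J \subset S$, one has $I \notin \F^-$ or $J \notin \F^+$. This is a condition phrased purely in terms of the lattices $\F^-$ and $\F^+$.

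Next, I would invoke Corollary~\ref{coro:StructuralGaloisConnection}, which asserts (under Assumption~\ref{StandardAssumptions}) that the families $\F^-$ and $\F^+$ depend only on the support of the transition probability $P$. Composing these two observations yields the conclusion: the condition in the previous paragraph, being expressed solely in terms of $\F^-$ and $\F^+$, is a function only of the support of $P$.

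There is essentially no obstacle: all the real content has already been absorbed into Theorem~\ref{thm:Galois} (the lattice/Galois characterization of existence of nontrivial fixed points) and into Corollary~\ref{coro:StructuralGaloisConnection} (the structural invariance of $\F^\pm$ and $(\Phi,\Phi^\star)$, which rests on Lemma~\ref{lem:BooleanReformulation} and thus on Assumption~\ref{StandardAssumptions}). The only thing to be careful about is to ensure that the hypotheses of Corollary~\ref{coro:StructuralGaloisConnection} are actually in force for both transition probabilities $P$ and $P'$ one wishes to compare; this is automatic since Assumption~\ref{StandardAssumptions} is assumed in the statement. The proof can therefore be written in two or three lines.
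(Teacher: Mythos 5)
Your proposal is correct and is exactly the paper's own argument: the paper derives this corollary by combining Theorem~\ref{thm:Galois} (existence of nontrivial fixed points is equivalent to a condition stated purely in terms of $\F^-$ and $\F^+$) with Corollary~\ref{coro:StructuralGaloisConnection} (under Assumption~\ref{StandardAssumptions}, these families depend only on the support of $P$). Your added remark about checking the hypothesis for both transition probabilities being compared is a harmless precision, not a deviation.
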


The theorem below gives a way to compute the images of subsets of states by the Galois connection.

\begin{theorem}
  \label{thm:GaloisComputation}
  Let $F$ be the payment-free Shapley operator
  associated with actions spaces $A_i$ and $B_i$ of the two players, 
  and a transition probability $P$, let $F^+$ and $F^-$
  be defined by~\eqref{defF+} and~\eqref{defF-} respectively.
  For $I \in \F'^-$ and $J \in \F'^+$ we have
  \begin{align*}
    \indic_{\Phi'(I)} &= (F^-)^\omega(\indic_\Cpt{I}) \enspace ,\\
    \indic_\Cpt{\Phi'^\star(J)} &= (F^+)^\omega(\indic_J) \enspace .
  \end{align*}
\end{theorem}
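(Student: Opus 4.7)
\emph{Plan.} I would prove the first equality $\indic_{\Phi'(I)}=(F^-)^\omega(\indic_\Cpt{I})$; the second follows by applying the same argument to the conjugate operator $\widetilde F$ and using that $(\widetilde F)^+=\widetilde{F^-}$, as noted just before Lemma~\ref{lem:BooleanReformulation}. This duality swaps the roles of $F^-$ and $F^+$, and of $(\F'^-,\Phi'^\star)$ with $(\F'^+,\Phi')$.

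First I would record that $F^-$ maps $\{0,1\}^n$ into itself: for $x\in\{0,1\}^n$, each inner $\min_{j:(P_i^{ab})_j>0}x_j$ is taken over a nonempty set (since $P_i^{ab}\in\Delta(S)$) of Booleans, and the subsequent $\min_a\max_b$ of Boolean values stays in $\{0,1\}$. Because $I\in\F'^-$ and $F^-\leq F^+$, we have $F^-(\indic_\Cpt{I})\leq F^+(\indic_\Cpt{I})\leq\indic_\Cpt{I}$, so by monotonicity of $F^-$ the sequence $((F^-)^k(\indic_\Cpt{I}))_{k\geq 0}$ is nonincreasing in the \emph{finite} poset $\{0,1\}^n$ and therefore stabilizes at some $\indic_K$ with $K\subset\Cpt{I}$. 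Stabilization gives $F^-(\indic_K)=\indic_K$, hence in particular $\indic_K\leq F^-(\indic_K)$, so $K\in\F'^+$; combined with $K\cap I=\emptyset$, the definition~\eqref{eq:GaloisConnection} of $\Phi'(I)$ yields $K\subset\Phi'(I)$.

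For the reverse inclusion, I would first check that $\F'^+$ is stable under union: for $J_1,J_2\in\F'^+$, monotonicity of $F^-$ gives $F^-(\indic_{J_1\cup J_2})\geq F^-(\indic_{J_\ell})\geq\indic_{J_\ell}$ for $\ell=1,2$, hence $F^-(\indic_{J_1\cup J_2})\geq\indic_{J_1\cup J_2}$. Consequently $\Phi'(I)$, being the union of all $J\in\F'^+$ disjoint from $I$, itself belongs to $\F'^+$, so $\indic_{\Phi'(I)}\leq\indic_\Cpt{I}$ and $\indic_{\Phi'(I)}\leq F^-(\indic_{\Phi'(I)})$. Iterating monotonicity of $F^-$ then yields $\indic_{\Phi'(I)}\leq(F^-)^k(\indic_\Cpt{I})$ for every $k$, and passing to the limit gives $\Phi'(I)\subset K$. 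This combined with the previous paragraph gives $K=\Phi'(I)$ and the first equality.

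\emph{Expected main difficulty.} No step is analytically deep; the crux is the structural observation that $\F'^+$ is closed under union, which together with the Boolean invariance of $F^\pm$ upgrades the real-valued limit $(F^-)^\omega$ to the indicator of a concrete subset. The duality reduction for the second identity is then formal.
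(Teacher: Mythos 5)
Your proposal is correct and follows essentially the same route as the paper: iterate $F^-$ from $\indic_\Cpt{I}$ (nonincreasing, Boolean, hence stabilizing at some $\indic_K$ with $K\in\F'^+$ disjoint from $I$), then obtain the reverse inclusion by comparing with the iterates started from sets in $\F'^+$ disjoint from $I$, and get the second identity by duality. The only cosmetic difference is that you first show $\F'^+$ is closed under union so as to apply the comparison directly to $\Phi'(I)$, whereas the paper applies it to each such $K$ separately and takes the union afterwards.
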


\begin{proof}
  We show only the first assertion, the second follows by duality.
  
  Let $I\in\F'^-$. By definition, $F^+(\indic_\Cpt{I}) \leq \indic_\Cpt{I}$,
  and using $F^-\leq F^+$, we obtain $F^-(\indic_\Cpt{I}) \leq \indic_\Cpt{I}$.
  It follows that $(F^-)^\omega(\indic_\Cpt{I})$ is well defined
  and $(F^-)^\omega(\indic_\Cpt{I})\leq  \indic_\Cpt{I}$.
  $F^-$ is a Boolean map, hence there exist $L\subset S$ such that 
  $\indic_L = (F^-)^\omega(\indic_\Cpt{I})$. It remains to show that $L=\Phi'(I)$.
  
  Since $\indic_L$ is a fixed point of $F^-$, $L$ belongs to $\F'^+$.
  Furthermore, it satisfies $\indic_L \leq \indic_\Cpt{I}$, that is, $I \cap L = \emptyset$. Then $L\subset \Phi'(I)$.
  
  Let $K \in \F'^+$ such that $I \cap K = \emptyset$, that is, $\indic_K \leq \indic_{S \setminus I}$.
  By induction, we get that $(F^-)^k(\indic_K) \leq (F^-)^k(\indic_\Cpt{I})$ for every integer $k$.
  By definition, we also 
  have that  $\indic_K \leq F^-(\indic_K)$, hence  $(F^-)^\omega(\indic_K)$
  exists and $(F^-)^\omega(\indic_K)\geq \indic_K$.
  This leads to $\indic_K \leq (F^-)^\omega(\indic_K)\leq 
  (F^-)^\omega(\indic_\Cpt{I})=\indic_L$, which implies that $ K \subset L$.
  This holds for all $K \in \F'^+$ such that $I \cap K = \emptyset$, 
  hence, by definition of $\Phi'$, $\Phi'(I) \subset L$.
\end{proof}

We conclude this subsection by giving an interpretation in terms of zero-sum game of the conditions \eqref{eq:H1'} and \eqref{eq:H2'} of 
Lemma~\ref{lem:BooleanReformulation}.

\begin{proposition}
  \label{prop:GameInterpretation}
  Let us fix a state space $S = [n]$, and the actions spaces $A_i$ and $B_i$ of the two players.
  Let $r$ be a bounded transition payment, $P$ be a transition probability, and let $T=T(r,P)$ be the Shapley operator of the game $\Gamma(r,P)$.
  Then
  \begin{enumerate}
    \item \label{minforce}
      \eqref{eq:H1'} holds for $F=\hat T$ and $I \subset S$ if, and only if, there exists a policy for player \MIN, i.e.\ a map $i\in S \mapsto a \in A_i$ such that for any strategy of player \MAX\ and any initial state in $I$, the sequence of states of the game $\Gamma(r,P)$ stays in $I$ almost surely;
    \item \label{maxforce}
      \eqref{eq:H2'} holds for $F=\hat T$ and $J \subset S$ if, and only if, there  exists a policy for player \MAX, i.e.\ a map $(i,a)\in \cup_{i \in S}(\{i\} \times A_i) \mapsto b \in B_i$ such that  for any strategy of player \MIN\ and any initial state in $J$,  the sequence of states of the game $\Gamma(r,P)$ stays in $J$ almost surely.
  \end{enumerate}
\end{proposition}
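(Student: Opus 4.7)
The plan is to interpret conditions~\eqref{eq:H1'} and~\eqref{eq:H2'} directly in terms of the supports of the transition probabilities; the game-theoretic content then follows essentially by inspection.

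For assertion~\pref{minforce}, I first unpack~\eqref{eq:H1'}. Since $\indic_\Cpt{I}$ takes only values $0$ and $1$, the condition $[F^+(\indic_\Cpt{I})]_i \leq 0$ for $i\in I$ reads
\[
\min_{a\in A_i}\max_{b\in B_i}\max_{j:(P_i^{ab})_j>0} [\indic_\Cpt{I}]_j \,=\, 0,
\]
which is equivalent to the existence of some $a\in A_i$ such that for every $b\in B_i$, the support $\{j:(P_i^{ab})_j>0\}$ is contained in $I$. Choosing such an $a=\sigma(i)$ for every $i\in I$ (and arbitrarily for $i\notin I$) defines a policy of player \MIN\ ensuring that, whenever the current state lies in $I$, the probability of transitioning to $\Cpt{I}$ is zero, no matter what action player \MAX\ plays. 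A straightforward induction on the stage then shows that the sequence of states stays in $I$ almost surely, for any strategy of \MAX\ and any initial state in $I$. Conversely, if such a policy $\sigma$ exists, then for each $i\in I$ the action $a=\sigma(i)$ witnesses~\eqref{eq:H1'} at coordinate $i$.

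Assertion~\pref{maxforce} is obtained by the same unpacking applied to $F^-$. The condition $[F^-(\indic_J)]_i \geq 1$ for $i\in J$ unfolds as: for every $a\in A_i$, there exists $b\in B_i$ such that $\{j:(P_i^{ab})_j>0\} \subset J$. Selecting such a $b=\tau(i,a)$ for every $i\in J$ and $a\in A_i$ (and arbitrarily otherwise) defines a policy of player \MAX\ that, starting from any state in $J$, keeps the trajectory inside $J$ almost surely regardless of \MIN's strategy. The converse direction is again immediate by reading off the definitions.

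The only subtle point is to match the quantifier structure in~\eqref{defF+}--\eqref{defF-} with the asymmetry between the two players' policies: \MIN's policy depends on the current state alone (which corresponds to the outer $\min_a$ in $F^+$, uniformly over $b$), whereas \MAX's policy also depends on \MIN's action (matching the $\max_b$ nested inside $\min_a$ in $F^-$). Once this correspondence is clear, both implications of the proposition are pure bookkeeping, and no genuine obstacle arises; in particular, Assumption~\ref{StandardAssumptions} is not needed here, since the relevant minima and maxima only involve the support of $P$, on which existence of optimal actions in the Boolean operators is automatic.
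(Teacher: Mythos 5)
Your proposal is correct and follows essentially the same route as the paper: unpack \eqref{eq:H1'} and \eqref{eq:H2'} as support-inclusion conditions on the transition probabilities (the paper's equivalences~\eqref{equiv0}--\eqref{equiv1}), assemble the witnessing actions into a stationary policy (using choice over $\cup_i(\{i\}\times A_i)$ for \MAX), and run the one-step probabilistic argument plus induction in both directions. Your closing remarks on the quantifier asymmetry between the two players and on Assumption~\ref{StandardAssumptions} being unnecessary match the paper's treatment as well.
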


\begin{proof} 
  \pref{minforce}: Suppose that~\eqref{eq:H1'} holds for $F=\hat T$ and $I \subset S$. 
  Then, for all $i\in I$, we have $[F^+(\indic_\Cpt{I})]_i = 0$.
  Since $F^+$ involves min and max operators (see~\eqref{defF+}),
  we obtain that, for every $i \in I$, 
  there is an action $a \in A_i$ of player \MIN\ such that 
  $ \max_{j:(P_{i}^{a b})_j > 0} [\indic_\Cpt{I}]_j = 0$
  for every action $b \in B_i$ of player \MAX,
  which is equivalent, by~\eqref{equiv0}, with $P_i^{a b} \indic_\Cpt{I}=0$.
  Since $S$ is finite, there exists an element $\sigma$ of $\A$,
  that is a policy $\sigma$ of player \MIN, 
  $\sigma: i\in S\mapsto \sigma(i)\in A_i$,
  such that $P_i^{\sigma(i) b} \indic_\Cpt{I}=0$ for all $i\in I$ and  $b \in B_i$.
  
  Denote, as in Section~\ref{sec-games-basic},
  by $(i_k)_{k\geq 0}$ the (random) sequence 
  of states of the game  $\Gamma(r,P)$.
  If the current state $i_k$ is in $I$, then the probability that the state $i_{k+1}$ at the following stage is in $\Cpt{I}$ is equal to $P_i^{a b} \indic_\Cpt{I}$ if actions $a$ and $b$ are chosen.
  In particular, if player  \MIN\ selects the action $\sigma(i)$, then this probability is $0$, whatever player \MAX\ chooses.
  Hence, if player \MIN\  chooses the Markovian stationary strategy 
  corresponding to $\sigma$ ($a_k=\sigma(i_k)$ for all $k\geq 0$), and if the initial state $i_0$ is in $I$, then for any strategy (Markovian or not) of player \MAX, the probability that the sequence of states  $(i_k)_{k\geq 0}$ leaves $I$
  is $0$. This shows the ``only if'' part of \pref{minforce}.
  
  Conversely, suppose that there exists a policy $\sigma: i\in S \mapsto \sigma(i) \in A_i$ of player \MIN\ such that for any initial state $i_0$ in $I$, if player \MIN\  chooses the Markovian stationary strategy corresponding to $\sigma$, then (for any strategy of player \MAX), the state of the game $\Gamma(r,P)$ stays in $I$ almost surely.
  In particular, for any $i \in I$ and any $b \in B_i$,
  taking $i_0=i$, the strategy $a_k=\sigma(i_k)$, $k\geq 0$, for
  player \MIN\ and any strategy of player \MAX\ such that $b_0=b$, we get that 
  the probability that $i_1$ is outside $I$ is equal to $0$.
  Since this probability coincides with $P_i^{\sigma(i) b} \indic_\Cpt{I}$, 
  we deduce, using~\eqref{equiv0}, that
  $ \max_{j:(P_{i}^{\sigma(i) b})_j > 0} [\indic_\Cpt{I}]_j = 0$.
  This holds for all $b\in B_i$ and $i \in I$,
  hence $[F^+(\indic_\Cpt{I})]_i\leq \max_{b\in B_i} \max_{j:(P_{i}^{\sigma(i) b})_j > 0} [\indic_\Cpt{I}]_j = 0$ for all  $i \in I$.
  It follows that $F^+(\indic_\Cpt{I}) \leq \indic_\Cpt{I}$, 
  that is~\eqref{eq:H1'}.
  
  \pref{maxforce}: Suppose that~\eqref{eq:H2'}
  holds for $F=\hat T$ and $J \subset S$. 
  Then, for all $i\in J$, we have $[F^-(\indic_J)]_i = 1$.
  Since $F^-$ involves min and max operators (see~\eqref{defF-}),
  we obtain that, for every $i \in J$ and $a \in A_i$
  there is an action $b \in B_i$ of player \MAX\ such that 
  $ \min_{j:(P_{i}^{a b})_j > 0} [\indic_J]_j = 1$, 
  which is equivalent, by~\eqref{equiv1}, with $P_i^{a b} \indic_J=1$.
  By the axiom of choice, there exists a map 
  $\tau: (i,a)\in \cup_{i\in S} (\{i\}\times A_i)\mapsto \tau(i,a)\in B_i$,
  that is a policy of player \MAX, 
  such that $P_i^{a\tau(i,a)} \indic_J=1$ for all $i\in J$ and $a \in A_i$.
  
  By the same arguments as above, we get that for the game $\Gamma(r,P)$, 
  if player \MAX\ chooses the Markovian stationary strategy 
  corresponding to $\tau$ ($b_k=\tau(i_k,a_k)$ for all $k\geq 0$), and if the initial state $i_0$ is in $J$, then for any strategy (Markovian or not) 
  of player \MIN, the probability that the sequence of states $(i_k)_{k\geq 0}$ 
  leaves $J$ is $0$. This shows the ``only if'' part of \pref{maxforce}.
  The ``if'' part is obtained by the same arguments as for~\pref{minforce}.
\end{proof}

\subsection{Hypergraph characterization}
\label{sec-hypergraph}

In this subsection, we introduce directed hypergraphs which will allow us to represent the Boolean operators $F^+$ and $F^-$.
In particular, we shall see that 
finding $\Phi'(I)$ (resp.\ $\Phi'^\star(J)$) for a given $I \in \F'^-$ (resp.\ $J \in \F'^+$),
is equivalent to solving a reachability problem in a directed hypergraph.
We refer the reader to~\cite{GLNP93,AllamigeonAlgorithmica2013} for
more background on reachability problems in hypergraphs.

A \textit{directed hypergraph} is a pair $(N,E)$, where $N$ is a set of \textit{nodes} and $E$ is a set of (directed) \textit{hyperarcs}.
A hyperarc $e$ is an ordered pair $(\tail(e),\head(e))$ of disjoint nonempty subsets of nodes; $\tail(e)$ is the \textit{tail} of $e$ and $\head(e)$ is its \textit{head}. We shall often write $\tail$ and $\head$ instead of $\tail(e)$ and $\head(e)$, respectively, for brevity. When $\tail$ and $\head$ are both of cardinality one, the hyperarc is said to be an arc, and when every hyperarc is an arc,
the directed hypergraph becomes a directed graph.

In the following, the term {\em hypergraph}
will always refer to a directed hypergraph. 
The {\em size} of a hypergraph $G=(N,E)$ is defined as $\size(G) = |N|+\sum_{e \in E} |\tail(e)|+|\head(e)|$, where $|X|$ denotes the cardinality of any set $X$.
Note that we shall consider in the sequel hypergraphs with an infinite
number of nodes or hyperarcs, leading to $\size(G)=\infty$
(we set $|X|=\infty$ when $X$ is infinite).

Let $G=(N,E)$ be a hypergraph.
A {\em hyperpath} of length $p$ from a set of nodes $I \subset N$ to a node $j \in N$
is a sequence of $p$ hyperarcs $(\tail_1,\head_1),\dots,(\tail_p,\head_p)$,
such that $\tail_i \subset \cup_{k=0}^{i-1} \head_k$ for all $i=1, \dots, p+1$
with the convention $\head_0=I$ and $\tail_{p+1}=\{j\}$.
Then, we say that a node $j \in N$ is {\em reachable} from a set $I \subset N$,
if and only if there exists a hyperpath from $I$ to $j$.
Alternatively, the relation of reachability can be defined in a recursive way:
a node $j$ is reachable from the set $I$ if either $j \in I$
or there exists a hyperarc $(\tail,\head)$ such that $j \in \head$
and every node of $\tail$ is reachable from the set $I$.
A set $J$ is said to be {\em reachable} from a set $I$
if every node of $J$ is reachable from $I$. We denote by
$\Reach(I,G)$ the set of reachable nodes from $I$.

A subset $I$ of $N$ is {\em invariant} in the hypergraph $G$
if it contains every node that is
reachable from itself, that is $\Reach(I,G)\subset I$.
If $N'\subset N$, 
we shall also say that a subset $I$ of $N'$ is {\em invariant} 
in the hypergraph $G$ relatively to $N'$,
if it contains every node of $N'$ that is
reachable from itself, that is $\Reach(I,G)\cap N'\subset I$.
One readily checks that the
set of nodes of $N'$ that are reachable from a given set $I\subset N'$
is the smallest invariant set in the hypergraph $G$ relatively to $N'$,
containing $I$. The reachability notion will be illustrated in Example~\ref{ex-reach}.

We now make the connection with our problem.
Let $F$ be the payment-free Shapley operator
associated with the actions spaces $A_i$ and $B_i$ of the two players, 
and the transition probability $P$, and let $F^+$ and $F^-$
be its Boolean abstractions 
defined by~\eqref{defF+} and~\eqref{defF-} respectively.
We construct two hypergraphs $G^+ = (N^+,E^+)$ (Figure~\ref{fig:HyperG+}) and $G^- = (N^-,E^-)$ (Figure~\ref{fig:HyperG-}) as follows.
We first need to introduce a copy of $S$, denoted by $S'$.
It is a set disjoint from $S$ and given by a bijection $\pi : S \to S'$.
For the purposes of the following constructions, we also need to assume $S'$ disjoint from the two sets $\{ (i,a) \mid i \in S, \; a \in A_i \}$ and $ \{ (i,a,b) \mid i \in S, \; a \in A_i, \; b \in B_i\}$.

The node set of $G^+$ is
$N^+ = \{ (i,a) \mid i \in S, \; a \in A_i \} \cup S \cup S'$.
The hyperarcs of $G^+$ are of the form:
\begin{itemize}
  \item[-] $( \{i\} \times A_i, \{ \pi(i) \} ), \enspace i \in S$;
  \item[-] $( \{j\}, \{ (i,a) \} )$, for all $j,i \in S$ and $a \in A_i$ such that there exists $b \in B_i$ with $(P_{i}^{ab})_j> 0$.
\end{itemize}
As shown on Figure~\ref{fig:HyperG+}, this hypergraph is structured in two layers;
the first layer consists of the arcs $( \{j\}, \{(i,a)\} )$
whereas the second layer consists of the hyperarcs $( \{i\} \times A_i, \{ \pi(i) \} )$.

\tikzset{->-/.style={decoration={markings,mark= at position 0.5 with {\arrow{#1}},},postaction={decorate}}}
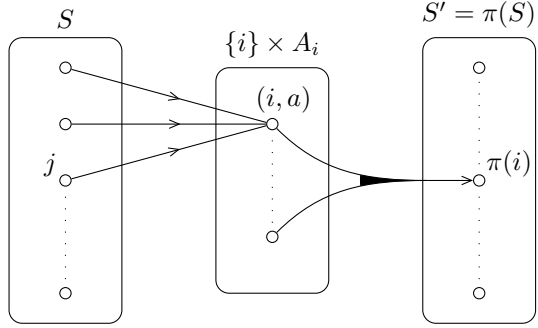
\begin{figure}[htbp]
  \begin{center}
    \begin{tikzpicture}
      [node distance=0.75cm, on grid, bend angle=20, auto,
        state/.style={circle,draw,inner sep=0pt,minimum size=1.5mm},
        dots/.style={-,shorten >=3pt,shorten <=3pt,loosely dotted},
        simpledge/.style={->-={angle 45 reversed}},
        hyperedge/.style={>=angle 45},
        frame/.style={draw,rectangle,rounded corners=5pt}]
      \node[state] (s_1) at (0,0) {};
      \node[state] (s_2) [below=of s_1] {};
      \node[state] (s_3) [below=of s_2, label={[shift={(-0.2,-0.15)}]$j$}] {};
      \node[state] (s_4) [below=of s_3, yshift=-0.75cm] {}
        edge [dots] (s_3);
      \node[state] (a_1) [right=of s_2, xshift=2.0cm,label={[shift={(0.15,-0.05)}]${(i,a)}$}] {}
        edge [simpledge] (s_1)
        edge [simpledge] (s_2)
        edge [simpledge] (s_3);
      \node[state] (a_2) [below=of a_1, yshift=-0.75cm] {}
        edge [dots] (a_1);
      \node[state] (s_5) [right=of s_3, xshift=4.75cm, label={[shift={(0.4,-0.15)}]${\pi(i)}$}] {};
      \node[state] (s_6) [above=of s_5, yshift=0.75cm] {}
        edge [dots] (s_5);
      \node[state] (s_7) [below=of s_5, yshift=-0.75cm] {}
        edge [dots] (s_5);
      \hyperedge[0.40][$(hyper@tail)!0.75!(hyper@head)$]{a_1,a_2}{s_5};
      \node[frame,minimum height=3.8cm,minimum width=1.5cm] [below=of s_2,label=above:$S$] {};
      \node[frame,minimum height=3.0cm,minimum width=1.5cm] [below=of a_1,label=above:$\{i\}\times A_i$] {};
      \node[frame,minimum height=3.8cm,minimum width=1.5cm] [right=of s_3, xshift=4.75cm,label=above:${S'=\pi(S)}$] {};
    \end{tikzpicture}
  \end{center}
  \caption{Hypergraph $G^+$ associated with $F^+$}
  \label{fig:HyperG+}
\end{figure}

The motivation for the construction of this hypergraph is that it encodes the Boolean operator $F^+$.
Recall that for $x \in \{0,1\}^n$, we have
\[ [F^+(x)]_i = \min_{a \in A_i} \max_{b \in B_i} \max_{j:(P_i^{a b})_j > 0} x_j \; , \enspace i \in S \enspace . \]
Denoting by $y_{i,a} := \max_{b \in B_i} \max_{j:(P_i^{a b})_j > 0} x_j$, we also have $[F^+(x)]_i = \min_{a \in A_i} y_{i,a}$.
If $x = \indic_J$ for some $J \subset S$, then $y_{i,a} = 1$ if, and only if, there is $b \in B_i$ and $j \in J$ such that $(P_i^{a b})_j > 0$.
This is also equivalent to the node $(i,a)$ being reachable from $J$ in $G^+$.
Then, $[F^+(x)]_i = 1$ if, and only if, $y_{i,a} = 1$ for every $a \in A_i$, which is equivalent to all the nodes in the tail of the hyperarc $(\{i\} \times A_i, \{\pi(i)\})$ being reachable from $J$ in $G^+$.
According to the recursive definition of reachability, this is equivalent to $\pi(i)$ being reachable from $J$ in $G^+$.
Hence, we have the following result.

\begin{proposition}
  \label{prop:HypergraphG+}
  Let $F$ be the payment-free Shapley operator associated with the actions spaces $A_i$ and $B_i$ of the two players, and the transition probability $P$, and let $F^+$ be defined by~\eqref{defF+}.
  Then, the node $\pi(i) \in S'$ is reachable from $J \subset S$ in $G^+$ if, and only if, $[F^+(\indic_J)]_i = 1$.
  \qed
\end{proposition}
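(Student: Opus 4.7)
The plan is to formalize the correspondence already sketched between the layered structure of $G^+$ and the nested $\min/\max$ in the definition of $F^+$. Since $F^+$ maps $\{0,1\}^n$ into $\{0,1\}^n$, the value $[F^+(\indic_J)]_i$ is in $\{0,1\}$, so it suffices to show that the event ``$[F^+(\indic_J)]_i = 1$'' matches the event ``$\pi(i)$ is reachable from $J$ in $G^+$''. I would proceed by case-distinction on the three classes of nodes of $N^+$, exploiting the fact that each class has a precise description of the hyperarcs incident to it.

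First I would dispose of the nodes in $S$: no hyperarc of $G^+$ has its head contained in $S$, hence by the recursive characterization of reachability, a node $j \in S$ is reachable from $J$ if and only if $j \in J$. Next I would handle the ``first-layer'' arcs $(\{j\}, \{(i,a)\})$. By construction, such an arc exists exactly when there is some $b \in B_i$ with $(P_i^{ab})_j > 0$. Setting
\[
y_{i,a} := \max_{b \in B_i}\,\max_{j:(P_i^{ab})_j>0}(\indic_J)_j,
\]
one has $y_{i,a}=1$ iff there exist $b \in B_i$ and $j \in J$ with $(P_i^{ab})_j>0$, i.e.\ iff some arc $(\{j\},\{(i,a)\})$ with $j\in J$ exists. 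Since these are the only hyperarcs with head $\{(i,a)\}$, the node $(i,a)$ is reachable from $J$ in $G^+$ if and only if $y_{i,a}=1$.

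Finally I would combine these with the unique ``second-layer'' hyperarc $(\{i\}\times A_i, \{\pi(i)\})$, which is the only hyperarc with head $\{\pi(i)\}$. Since $\pi(i)\in S'$ and $J\subset S$ are disjoint, the recursive definition of reachability gives that $\pi(i)$ is reachable from $J$ iff every node in $\{i\}\times A_i$ is reachable from $J$, which by the previous step is equivalent to $y_{i,a}=1$ for every $a\in A_i$, i.e.\ to
\[
[F^+(\indic_J)]_i \;=\; \min_{a\in A_i} y_{i,a} \;=\; 1,
\]
as required. The only genuine care needed is in faithfully applying the recursive definition of reachability to a \emph{hyperarc} (requiring \emph{all} tail nodes to be reachable, not merely one); since the two-layer structure has been designed precisely so that the $\min$ over $A_i$ at state $i$ corresponds to the hyperarc's ``all-of''  semantics while the $\max$ over $b$ and over $j$ in the support corresponds to ordinary arc disjunction, there is no further obstacle.
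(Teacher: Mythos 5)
Your argument is correct and is essentially the same as the paper's, which proves this proposition via the discussion immediately preceding its statement (hence the \qed there): introduce $y_{i,a}$, identify $y_{i,a}=1$ with reachability of $(i,a)$ from $J$, and use the single hyperarc $(\{i\}\times A_i,\{\pi(i)\})$ to translate the $\min$ over $A_i$ into the ``all tail nodes reachable'' condition. Your additional preliminary step, that a node of $S$ is reachable from $J$ iff it lies in $J$ because no hyperarc of $G^+$ has its head in $S$, is a small but welcome explicit justification that the paper leaves implicit.
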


\begin{example}\label{ex-reach}
  Let us consider the following payment-free Shapley operator defined on $\R^3$:
  \begin{equation*}
    F(x) =
    \begin{pmatrix}
      (x_1 \vee x_2) \, \wedge \, \frac{1}{2} (x_1 + x_3)\\
      x_1 \, \wedge \, \frac{1}{2} (x_2 + x_3)\\
      x_2 \, \vee \, x_3
    \end{pmatrix}
  \end{equation*}
  where $\wedge$ stands for $\min$ and $\vee$ for $\max$.
  The Boolean operator $F^+$ associated with $F$ is defined by
  \begin{equation*}
    F^+(x) =
    \begin{pmatrix}
     ( x_1 \, \vee  x_2) \, \wedge (x_1 \, \vee \, x_3)\\
      x_1 \, \wedge \, (x_2 \, \vee \, x_3)\\
      x_2 \vee x_3
    \end{pmatrix} \enspace .
  \end{equation*}
  Figure~\ref{fig:ExampleG+} shows the hypergraph $G^+$ associated with $F$,
where the element $\pi(i)$  of $S'$ is denoted $i'$.
  It can be checked that the nodes $1'$ and $3'$ are reachable from $\{2,3\}$, whereas the node $2'$ is not.
  According to Proposition~\ref{prop:HypergraphG+}, this is equivalent to the fact that
  \[
  F^+(\indic_{\{2,3\}}) = \indic_{\{1,3\}} \enspace .
  \]
  \tikzset{->-/.style={decoration={markings,mark= at position 0.42 with {\arrow{#1}},},postaction={decorate}}}
  \begin{figure}[htbp]
  \begin{center}
    \begin{tikzpicture}
      [node distance=1.15cm, on grid, bend angle=20, auto,
        state/.style={circle,draw,inner sep=0pt,minimum size=1.5mm},
        simpledge/.style={->-={angle 45 reversed}},
        hyperedge/.style={>=angle 45},
        frame/.style={draw,rectangle,rounded corners=5pt}]
      \node[state] (s_1) at (0,0) [label={[shift={(-0.2,-0.1)}]1}] {};
      \node[state] (s_2) [below=of s_1,label={[shift={(-0.2,-0.1)}]2}] {};
      \node[state] (s_3) [below=of s_2, label={[shift={(-0.2,-0.1)}]3}] {};
      \node[state] (a_2) [right=of s_1, xshift=1.25cm] {}
        edge [simpledge] (s_1)
        edge [simpledge] (s_3);
      \node[state] (a_1) [above=of a_2] {}
        edge [simpledge] (s_1)
        edge [simpledge] (s_2);
      \node[state] (a_3) [below=of a_2] {}
        edge [simpledge] (s_1);
      \node[state] (a_4) [below=of a_3] {}
        edge [simpledge] (s_2)
        edge [simpledge] (s_3);
      \node[state] (a_5) [below=of a_4] {}
        edge [simpledge] (s_2)
        edge [simpledge] (s_3);
      \node[state] (ss_1) [right=of a_2, xshift=1.25cm, label={[shift={(0.2,-0.1)}]1'}] {};
      \node[state] (ss_2) [below=of ss_1, label={[shift={(0.2,-0.1)}]2'}] {};
      \node[state] (ss_3) [below=of ss_2, label={[shift={(0.2,-0.1)}]3'}] {};
      \draw [->, >=angle 45] (a_5) -- (ss_3);
      \hyperedge[0.45][$(hyper@tail)!0.65!(hyper@head)$]{a_1,a_2}{ss_1};
      \hyperedge[0.50][$(hyper@tail)!0.65!(hyper@head)$]{a_3,a_4}{ss_2};
    \end{tikzpicture}
  \end{center}
  \caption{The hypergraph $G^+$ associated with $F$}
  \label{fig:ExampleG+}
\end{figure}
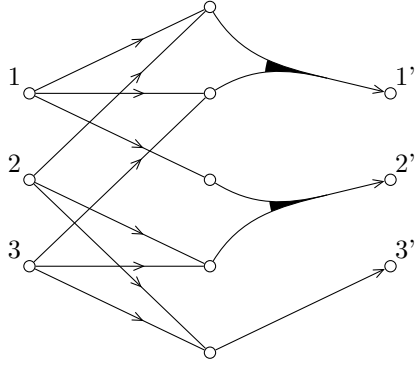
\end{example}

The node set of $G^-$ is $N^- = \{(i,a,b) \mid i \in S, \; a \in A_i, \; b \in B_i\} \cup S \cup S'$,
and its hyperarcs are:
\begin{itemize}
  \item[-] $( \{(i,a)\} \times B_i,\{\pi(i)\} ), \; i \in S, \; a \in A_i$;
  \item[-] $( \{j\}, \{(i,a,b)\} )$ for all $j,i \in S$, $a \in A_i$, $b \in B_i$, such that $(P_{i}^{ab})_j>0$.
\end{itemize}
Again, $G^-$ consists of two layers (see Figure~\ref{fig:HyperG-}).

\tikzset{->-/.style={decoration={markings,mark= at position 0.42 with {\arrow{#1}},},postaction={decorate}}}
\begin{figure}[htbp]
  \begin{center}
    \begin{tikzpicture}
      [node distance=0.75cm, on grid, bend angle=20, auto,
        state/.style={circle,draw,inner sep=0pt,minimum size=1.5mm},
        dots/.style={-,shorten >=3pt,shorten <=3pt,loosely dotted},
        simpledge/.style={->-={angle 45 reversed}},
        hyperedge/.style={>=angle 45},
        frame/.style={draw,rectangle,rounded corners=5pt}]
      \node[state] (s_1) at (0,0) {};
      \node[state] (s_2) [below=of s_1] {};
      \node[state] (s_3) [below=of s_2, label={[shift={(-0.2,-0.15)}]$j$}] {};
      \node[state] (s_4) [below=of s_3, yshift=-0.75cm] {}
        edge [dots] (s_3);
      \node[state] (s_5) [below=of s_4] {};
      \node[state] (a_1) [right=of s_2, xshift=2cm, label={[shift={(-0.05,-0.05)}]${(i,a,b')}$}] {}
        edge [simpledge] (s_1)
        edge [simpledge] (s_2)
        edge [simpledge] (s_3);
      \node[state] (a_2) [above=of a_1, yshift=0.75cm, label={[shift={(0.1,-0.05)}]${(i,a,b)}$}] {};
      \node (a_3) [above=of a_1, yshift=-0.45cm] {}
        edge [dots] (a_2);
      \node[state] (b_1) [right=of s_4, xshift=2cm,label={[shift={(-0.1,-0.75)}]${(i,a',b)}$}] {}
        edge [simpledge] (s_2)
        edge [simpledge] (s_4)
        edge [simpledge] (s_5);
      \node[state] (b_2) [below=of b_1, yshift=-0.75cm, label={[shift={(0.1,-0.75)}]${(i,a',b')}$}] {};
      \node (b_3) [below=of b_1, yshift=0.45cm] {}
        edge [dots] (b_2);
      \node[state] (s_6) [right=of s_3, xshift=4.75cm, yshift=-0.375cm, label={[shift={(0.4,-0.15)}]${\pi(i)}$}] {};
      \node[state] (s_7) [above=of s_6, yshift=0.75cm] {}
        edge [dots] (s_6);
      \node[state] (s_8) [below=of s_6, yshift=-0.75cm] {}
        edge [dots] (s_6);
      \hyperedgewithangles[0.3][$(hyper@tail)!0.55!(hyper@head)$]{a_2/-60,a_1/10}{-30}{s_6/145};
      \hyperedgewithangles[0.45][$(hyper@tail)!0.55!(hyper@head)$]{b_1/-10,b_2/60}{30}{s_6/-145};
      \node[frame,minimum height=4.55cm,minimum width=1.5cm] [below=of s_2,yshift=-0.375cm,label=above:$S$] {};
      \node[frame,minimum height=3cm,minimum width=1.5cm] [above=of a_1,label=above:${\{(i,a)\}\times B_i}$] {};
      \node[frame,minimum height=3cm,minimum width=1.5cm] [below=of b_1,label=below:${\{(i,a')\}\times B_i}$] {};
      \node[frame,minimum height=3.8cm,minimum width=1.5cm] [right=of s_3,xshift=4.75cm,yshift=-0.375cm,label=above:${S'=\pi(S)}$] {};
    \end{tikzpicture}
  \end{center}
  \caption{Hypergraph $G^-$ associated with $F^-$}
  \label{fig:HyperG-}
\end{figure}
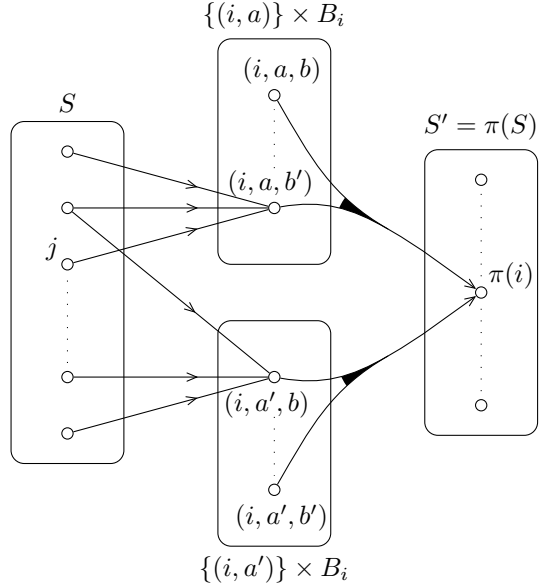

Like $G^+$, the motivation for the construction of $G^-$ is the following result.
\begin{proposition}
  \label{prop:HypergraphG-}
  Let $F$ be the payment-free Shapley operator associated with the actions spaces $A_i$ and $B_i$ of the two players, and the transition probability $P$, and let $F^-$ be defined by~\eqref{defF-}.  
  Then, the node $\pi(i) \in S'$ is reachable from $I \subset S$ in $G^-$ if, and only if, $[F^-(\indic_\Cpt{I})]_i = 0$. \qed
\end{proposition}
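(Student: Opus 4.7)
The plan is to follow the same strategy as in Proposition~\ref{prop:HypergraphG+}, unfolding the nested $\min/\max$ in the definition~\eqref{defF-} of $F^-$ as quantifiers over the action spaces and translating them directly into the recursive reachability in the two-layer hypergraph $G^-$. I would proceed in two steps, one per layer.

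First, I analyze the inner layer. A node of the form $(i,a,b) \in N^-$ lies in neither $S$ nor $S'$, in particular it does not belong to $I\subset S$, and its only incoming hyperarcs in $G^-$ are the arcs $(\{j\},\{(i,a,b)\})$ with $(P_i^{ab})_j>0$. By the recursive definition of reachability, $(i,a,b)$ is therefore reachable from $I$ if and only if there exists $j\in I$ with $(P_i^{ab})_j>0$. Equivalently, since $[\indic_\Cpt{I}]_j=0$ iff $j\in I$, this is exactly the condition $\min_{j:(P_i^{ab})_j>0}[\indic_\Cpt{I}]_j = 0$.

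Second, I analyze the outer layer. Since $\pi(i)\in S'$ is disjoint from $S\supset I$, it does not lie in $I$, and the only hyperarcs whose head contains $\pi(i)$ are the hyperarcs $(\{(i,a)\}\times B_i,\{\pi(i)\})$ indexed by $a\in A_i$. Hence $\pi(i)$ is reachable from $I$ if and only if there is some $a\in A_i$ such that every node $(i,a,b)$, $b\in B_i$, is reachable from $I$. Combining with the first step, this means there exists $a\in A_i$ such that for every $b\in B_i$ there exists $j\in I$ with $(P_i^{ab})_j>0$. Because $F^-$ takes only Boolean values, the $\inf$ over $a\in A_i$ and $\sup$ over $b\in B_i$ in~\eqref{defF-} coincide with $\min$ and $\max$, so the above quantifier formula translates exactly into
\[
[F^-(\indic_\Cpt{I})]_i \;=\; \min_{a\in A_i}\max_{b\in B_i}\min_{j:(P_i^{ab})_j>0}[\indic_\Cpt{I}]_j \;=\; 0,
\]
which establishes both implications of the proposition.

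There is no substantive obstacle here; the argument is a purely combinatorial quantifier-matching, strictly parallel to the one sketched for $G^+$. The only difference is that $F^-$ has three nested operators $\min_a\max_b\min_j$ (instead of $\min_a\max_b\max_j$ for $F^+$, where the two outer maxima collapse into a single $\max$ over neighbors), which is precisely why $G^-$ has an additional layer of nodes $(i,a,b)$ between $S$ and $S'$. No compactness or continuity assumption on the action spaces is needed, since all quantities involved belong to $\{0,1\}$.
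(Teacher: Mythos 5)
Your proof is correct and follows exactly the reasoning the paper uses: the paper marks this proposition as immediate (\qed) because it is the same layer-by-layer quantifier-matching argument spelled out in the text before Proposition~\ref{prop:HypergraphG+}, adapted to the extra $\max_{b}$ layer of $G^-$, which is precisely what you do. Nothing is missing.
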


So far we did not make any assumption about the action spaces, which may be infinite, leading to infinite hypergraphs $G^+$ and $G^-$.
The absence of symmetry between $G^+$ and $G^-$ reflects the lack of symmetry between $F^+$ and $F^-$.

Denote $\bar G^+$ and $\bar G^-$ the hypergraphs obtained from $G^+$ and $G^-$, respectively, by identifying every node $i\in S$ with node $\pi(i)\in S'$.
The following proposition is immediate.

\begin{proposition}
  \label{prop:CheckingH1H2}
  A subset $I\subset S$ belongs to $\F'^-$
  if, and only if, its complement in $S$ 
  is an invariant set in the hypergraph $\bar G^+$ relatively to $S$:
  $\Reach(S\setminus I, \bar{G}^+)\cap S = S \setminus I$.
  A subset $J\subset S$ belongs to $\F'^+$
  if, and only if, its complement in $S$ 
  is an invariant set in the hypergraph $\bar G^-$ relatively to $S$:
  $\Reach(S \setminus J, \bar{G}^-) \cap S = S \setminus J$.
  \qed 
\end{proposition}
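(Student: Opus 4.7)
The plan is to deduce both equivalences directly from Propositions~\ref{prop:HypergraphG+} and~\ref{prop:HypergraphG-} by observing that reachability in the collapsed hypergraph $\bar{G}^+$ (resp.\ $\bar{G}^-$) amounts to iterating the Boolean operator $F^+$ (resp.\ $F^-$) starting from the source set. The key intermediate fact I would prove first is: for any $J \subset S$, the set $\Reach(J, \bar{G}^+) \cap S$ is precisely the smallest subset $X$ of $S$ containing $J$ such that $F^+(\indic_X) \leq \indic_X$; dually, $S \setminus (\Reach(S \setminus J, \bar{G}^-) \cap S)$ is the largest subset $Y$ of $J$ such that $\indic_Y \leq F^-(\indic_Y)$.

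To establish the intermediate claim for $\bar{G}^+$ I would unfold its two-layer structure: starting from $J \subset S$, an intermediate node $(i,a)$ becomes reachable exactly when some state $j$ already in the reachable set satisfies $(P_i^{ab})_j > 0$ for some $b \in B_i$, and the collapsed node $i \equiv \pi(i)$ becomes reachable as soon as all the $(i,a)$ with $a \in A_i$ are reachable. Proposition~\ref{prop:HypergraphG+} shows that this ``one step'' agrees with one application of $F^+$, and the identification $\pi(i) \equiv i$ in $\bar{G}^+$ exactly corresponds to feeding the output of $F^+$ back into its argument. Hence the reachable set in $S$ is the least fixed point of the monotone operator $X \mapsto J \cup \{ i \in S \mid [F^+(\indic_X)]_i = 1 \}$, which is the smallest $X \supset J$ with $F^+(\indic_X) \leq \indic_X$. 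The $\bar{G}^-$ case proceeds by the same unfolding, now using Proposition~\ref{prop:HypergraphG-} and taking complements, after checking the mild asymmetry coming from the extra index $b$ in the intermediate layer of $G^-$.

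Once these characterizations are available, the two equivalences should drop out in one line each. For the first, if $I \in \F'^-$ then $S \setminus I$ itself satisfies $F^+(\indic_{S \setminus I}) \leq \indic_{S \setminus I}$, so the minimal closure of $S \setminus I$ in $\bar{G}^+$ is $S \setminus I$ itself; conversely, the equality $\Reach(S \setminus I, \bar{G}^+) \cap S = S \setminus I$ forces exactly this inequality, so $I \in \F'^-$. The second equivalence will be obtained identically from the dual statement on $\bar{G}^-$. The main obstacle, though entirely of bookkeeping nature, will be the unfolding in the intermediate claim: one must verify carefully that the layered structure of $\bar{G}^+$ and the slightly richer one of $\bar{G}^-$ genuinely realize the $\min$--$\max$ alternation in $F^+$ and $F^-$, and that the node identifications $i \equiv \pi(i)$ do not introduce spurious reachabilities. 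Once this is spelled out, the rest of the argument is purely formal.
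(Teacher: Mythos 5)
Your proposal is correct and follows exactly the route the paper intends (the paper states the proposition as immediate from Propositions~\ref{prop:HypergraphG+} and~\ref{prop:HypergraphG-} together with the recursive definition of reachability and the identification $i\equiv\pi(i)$). Your intermediate claim that $\Reach(J,\bar G^+)\cap S$ is the least $X\supset J$ with $F^+(\indic_X)\leq\indic_X$ is just the paper's observation that the reachable set is the smallest invariant set containing the source, so no further comparison is needed.
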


\begin{corollary}
  \label{coro:ComputationPhi}
  Let $F$ be the payment-free Shapley operator
  associated with the actions spaces $A_i$ and $B_i$ of the two players, 
  and the transition probability $P$, and let $F^+$ and $F^-$
  be defined by~\eqref{defF+} and~\eqref{defF-} respectively.
  Let $I \in \F'^-$ and $J \in \F'^+$.
  Then $\Phi'(I)$ is given by the complement in $S$ of all the nodes of $S$ that are reachable from $I$ in $\bar{G}^-$. Moreover,
  $\Phi'^\star(J)$ is given by the complement in $S$ of all the nodes of $S$ that are reachable from $J$ in $\bar{G}^+$.
\end{corollary}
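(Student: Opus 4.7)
The strategy is to combine Theorem~\ref{thm:GaloisComputation} with Proposition~\ref{prop:HypergraphG-} (resp.\ Proposition~\ref{prop:HypergraphG+}), and iterate the latter to connect a single application of $F^-$ (resp.\ $F^+$) with one pass through the two layers of the hypergraph $\bar{G}^-$ (resp.\ $\bar{G}^+$).

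I would focus on the first assertion; the second follows by a symmetric argument (or by duality, using the observation that $\widetilde{F}^+ = -F^-(-\cdot) +\unit$ on Boolean vectors). By Theorem~\ref{thm:GaloisComputation}, $\indic_{\Phi'(I)} = (F^-)^\omega(\indic_{S\setminus I})$, so it suffices to identify the set $Z := \{ i \in S \mid [(F^-)^\omega(\indic_{S\setminus I})]_i = 0 \}$ with $\Reach(I,\bar{G}^-) \cap S$. Since $I \in \F'^-$ and $F^- \leq F^+$, we have $F^-(\indic_{S\setminus I}) \leq \indic_{S\setminus I}$, so the sequence $(F^-)^k(\indic_{S\setminus I})$ is nonincreasing. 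Each iterate is Boolean, i.e., of the form $\indic_{S\setminus K_k}$ for some set $K_k \subset S$ with $K_0 = I$ and $K_k \subset K_{k+1}$.

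Next, I would proceed by induction on $k$, showing that $K_k = \Reach(I,\bar{G}^-) \cap S$ after identifying all the reachability that uses at most $k$ passes through the layer of hyperarcs $(\{(i,a)\}\times B_i,\{\pi(i)\})$ of $G^-$. The base case $k=1$ is exactly Proposition~\ref{prop:HypergraphG-}: $i \in K_1$ iff $\pi(i)$ is reachable from $I$ in $G^-$, which in $\bar{G}^-$ (where $\pi(i)$ and $i$ are identified) means $i$ is reachable from $I$ in one ``round''. For the inductive step, apply Proposition~\ref{prop:HypergraphG-} with $I$ replaced by $K_k$: since $(F^-)^{k+1}(\indic_{S\setminus I}) = F^-(\indic_{S\setminus K_k}) = \indic_{S\setminus K_{k+1}}$, we get $K_{k+1} = \{ i \in S \mid \pi(i) \text{ reachable from } K_k \text{ in } G^-\}$, which in $\bar{G}^-$ corresponds to nodes of $S$ reachable from $I$ using one more round than the $k$ rounds accumulated in $K_k$. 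Since $S$ is finite, the nondecreasing sequence $K_k$ stabilizes at some $K_\infty$; this stable set is invariant under the hypergraph closure procedure, and is precisely $\Reach(I,\bar{G}^-) \cap S$ by the recursive characterization of hypergraph reachability recalled in Section~\ref{sec-hypergraph}.

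Passing to the limit, $(F^-)^\omega(\indic_{S\setminus I}) = \indic_{S\setminus K_\infty}$, so $Z = K_\infty = \Reach(I,\bar{G}^-) \cap S$, giving $\Phi'(I) = S \setminus (\Reach(I,\bar{G}^-) \cap S)$, as desired. The dual statement follows by applying Theorem~\ref{thm:GaloisComputation} to $(F^+)^\omega(\indic_J) = \indic_{S\setminus \Phi'^\star(J)}$ and iterating Proposition~\ref{prop:HypergraphG+} along the layers of $\bar{G}^+$. The main delicate point is bookkeeping the identification $\pi(i) \sim i$ in the quotient hypergraph $\bar{G}^-$: one must check that iterated reachability in $\bar{G}^-$ exactly matches iterated application of $F^-$, because in $\bar{G}^-$ the nodes of $S$ reached after one round become admissible starting points for the next round, which is precisely how the argument of $F^-$ gets updated in the inductive step.
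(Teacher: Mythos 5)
Your proof is correct, but it follows a genuinely different route from the paper's. The paper argues order-theoretically: from the definition~\eqref{eq:GaloisConnection}, $S\setminus\Phi'(I)$ is the smallest set $I'\supset I$ with $S\setminus I'\in\F'^+$; Proposition~\ref{prop:CheckingH1H2} identifies such sets $I'$ with the invariant sets of $\bar{G}^-$ relatively to $S$; and $\Reach(I,\bar{G}^-)\cap S$ is the smallest invariant set relatively to $S$ containing $I$, as noted in Section~\ref{sec-hypergraph}. You instead start from Theorem~\ref{thm:GaloisComputation} and match the Kleene-type iteration $K_0=I$, $K_{k+1}=\{i\mid [F^-(\indic_{S\setminus K_k})]_i=0\}$ with successive rounds of reachability in the two-layered hypergraph, via repeated use of Proposition~\ref{prop:HypergraphG-}. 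Both arguments are sound; the paper's is shorter because it reuses Proposition~\ref{prop:CheckingH1H2} wholesale, whereas yours makes explicit the algorithmic content (one application of $F^-$ equals one pass through the layers), at the price of the bookkeeping you rightly flag as the delicate point --- in particular the fact that $K_k\subset K_{k+1}$ comes from the monotonicity of the sequence $(F^-)^k(\indic_{S\setminus I})$ (itself a consequence of $I\in\F'^-$ and $F^-\leq F^+$) rather than from the hypergraph, and that the stabilized set $K_\infty$ is the smallest invariant set relatively to $S$ containing $I$ because the layered structure of $\bar{G}^-$ lets every hyperpath to a node of $S$ be decomposed into such rounds. One cosmetic remark: the duality identity is $(\widetilde{F})^+=\widetilde{(F^-)}$, i.e.\ $(\widetilde{F})^+(x)=\unit-F^-(\unit-x)$ on the hypercube, not quite the formula you wrote; in any case the second assertion is most safely obtained by repeating your argument with $F^+$, $G^+$ and Proposition~\ref{prop:HypergraphG+}, since $G^+$ and $G^-$ are not structurally symmetric.
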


\begin{proof}
  It follows readily from the definition of $\Phi'$ (by~\eqref{eq:GaloisConnection} with  $\F'^-$ and $\F'^+$ instead of  $\F^-$ and $\F^+$), 
  that $S\setminus \Phi'(I)$ 
  is the smallest set $I'$ containing $I$ such that $S\setminus I' \in \F'^+$.
  By Proposition~\ref{prop:CheckingH1H2}, the latter condition holds
  if, and only if, $I'$ satisfies $\Reach(I',\bar{G}^-) \cap S = I'$. 
  Hence, $\Phi'(I)$ is the complement in $S$ of the set of nodes of $S$
  that are reachable from $I$ in $\bar{G}^-$.
  The argument for $\Phi'^\star$ is dual.
\end{proof}
We shall say that $I,J\subset S$ are {\em conjugate subsets of states}
with respect to the hypergraphs $\bar{G}^+,\bar{G}^-$
if $I,J$ are nonempty and if
\[
J= S \setminus (\Reach(I,\bar{G}^-)\cap S) \qquad \text{and} \qquad 
I = S \setminus (\Reach(J, \bar{G}^+)\cap S) \enspace .
\]

\begin{theorem}
  \label{th-conjugates}
  Let $F$ be the payment-free Shapley operator associated with actions spaces $A_i$ and $B_i$ of the two players, and a transition probability $P$, such that Assumption~\ref{StandardAssumptions} holds.
  Then the following assertions are equivalent: 
  \begin{enumerate}
    \item $F$  has a nontrivial fixed point;
    \item\label{th-conj-2} there exist nonempty disjoint subsets $I,J\subset S$ such that 
      $ S \setminus I$ is invariant in  $\bar{G}^+$ relatively to $S$, and
      $S\setminus J$ is invariant in $\bar{G}^-$ relatively to $S$;
    \item\label{th-conj-3} there exist conjugate subsets of states $I,J\subset S$
      with respect to the hypergraphs $\bar{G}^+,\bar{G}^-$.
  \end{enumerate}
  Furthermore, for any sets $I,J \subset S$, they are conjugate with respect to the Galois connection $(\Phi,\Phi^\star)$ if, and only if, they are conjugate with respect to the hypergraphs $\bar{G}^+,\bar{G}^-$.
\end{theorem}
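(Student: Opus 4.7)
The proof is essentially an assembly of the previously established equivalences, with Assumption~\ref{StandardAssumptions} providing the bridge between the original objects $(\F^-, \F^+, \Phi, \Phi^\star)$ and their Boolean counterparts $(\F'^-, \F'^+, \Phi', \Phi'^\star)$. The plan is to recast each condition in terms of the primed objects, where Proposition~\ref{prop:CheckingH1H2} and Corollary~\ref{coro:ComputationPhi} yield a direct hypergraph translation.

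First I would invoke Lemma~\ref{lem:BooleanReformulation} together with Corollary~\ref{coro:StructuralGaloisConnection}: under Assumption~\ref{StandardAssumptions}, we have $\F^- = \F'^-$ and $\F^+ = \F'^+$, whence the two Galois connections coincide, $(\Phi, \Phi^\star) = (\Phi', \Phi'^\star)$. This identification makes Theorem~\ref{thm:Galois} and the hypergraph material of Section~\ref{sec-hypergraph} interchangeable.

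For (1)$\Leftrightarrow$(\ref{th-conj-2}), I would apply Theorem~\ref{thm:Galois}(\ref{ii-thm-Galois}): $F$ has a nontrivial fixed point if and only if there exist nonempty disjoint $I, J \subset S$ with $I \in \F^-$ and $J \in \F^+$. Combining this with the identification above and Proposition~\ref{prop:CheckingH1H2} immediately rephrases membership in $\F^-$ and $\F^+$ as the invariance of $S\setminus I$ in $\bar{G}^+$ relatively to $S$ and of $S\setminus J$ in $\bar{G}^-$ relatively to $S$. For (1)$\Leftrightarrow$(\ref{th-conj-3}) and the concluding sentence, I would combine Theorem~\ref{thm:Galois}(\ref{vi-thm-Galois}) with Corollary~\ref{coro:ComputationPhi}, which expresses $\Phi(I) = S \setminus (\Reach(I, \bar{G}^-) \cap S)$ for $I \in \F^-$ and $\Phi^\star(J) = S \setminus (\Reach(J, \bar{G}^+) \cap S)$ for $J \in \F^+$. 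Substituting these formulas into the Galois-conjugacy relations $J = \Phi(I)$ and $I = \Phi^\star(J)$ gives verbatim the definition of conjugacy with respect to the hypergraphs, settling both the equivalence with (\ref{th-conj-3}) and the final assertion of the theorem.

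Since every ingredient is already in place, no genuinely hard step remains; the only delicate point is keeping track of where Assumption~\ref{StandardAssumptions} is actually used, namely in the identification of the primed and unprimed objects via Lemma~\ref{lem:BooleanReformulation}. Without it one would only have the inclusions $\F'^\pm \subset \F^\pm$, so the hypergraph conditions (\ref{th-conj-2}) and (\ref{th-conj-3}) would remain sufficient but could fail to be necessary for the existence of a nontrivial fixed point of $F$.
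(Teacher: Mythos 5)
Your proposal is correct and follows essentially the same route as the paper: identify $(\F^-,\F^+,\Phi,\Phi^\star)$ with their Boolean/primed counterparts via Lemma~\ref{lem:BooleanReformulation} under Assumption~\ref{StandardAssumptions}, then combine Theorem~\ref{thm:Galois} with Proposition~\ref{prop:CheckingH1H2} and Corollary~\ref{coro:ComputationPhi} to translate everything into hypergraph reachability. Your closing remark on where the assumption is actually needed (and the unconditional inclusions $\F'^\pm\subset\F^\pm$ otherwise) is accurate and a useful addition.
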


\begin{proof}
  Recall that, under Assumption~\ref{StandardAssumptions},
  $\F'^-$, $\F'^+$, $\Phi'$ and $\Phi'^\star$ 
  coincide with $\F^-$, $\F^+$, $\Phi$ and $\Phi^\star$ respectively.
  Then, the theorem follows from Theorem~\ref{thm:Galois} and from the characterization
  of the Galois connection $(\Phi',\Phi'^*)$ in terms of hypergraph
  reachability given in Corollary~\ref{coro:ComputationPhi}.
\end{proof}

\begin{remark}\label{remark-hypergraph-graph}
It is instructive to specialize the latter result to the case in which each player has only one possible action in each state. Then, we can write $F(x)=Px$, where $P$ is a stochastic matrix.
The two hypergraphs $\bar{G}^+$ and $\bar{G}^-$ 
are isomorphic (up to the identification of $(i,a)$ and $(i,a,b)$ with $\pi(i)$)
to the transpose of the digraph $G$ associated to $P$ 
(the arcs of $\bar{G}^+$ and $\bar{G}^-$ are in the opposite direction).
In particular, $ S \setminus I$ is invariant in  $\bar{G}^+$ (or $\bar{G}^-$)
 relatively to $S$ if and only if there are no arcs or paths from $I$ to
 $ S \setminus I$ in $G$.
Similarly, $(I,J)$ is a pair of conjugate subsets of states
with respect to $(\bar{G}^+,\bar{G}^-)$ if and only if 
$J$ is the greatest set of nodes with no paths in $G$ to a node of $I$,
and vice versa.
So Assertion~\pref{th-conj-2} of Theorem~\ref{th-conjugates}
implies the existence 
of two distinct final classes in $G$: $I$ and $J$ are disjoint and
there are no arcs from $I$ to $ S \setminus I$,
and similarly there are no arcs from $J$ to $ S \setminus J$,
so there exists a final class of $G$ included in $I$ and also a final class
of $G$ included in $J$, and since $I$ and $J$ are disjoint, there are
two distinct final classes of $G$.
Moreover, if $I$ and $J$ are two distinct final classes, then
there are no arcs or paths from $I$ to  $ S \setminus I$ in $G$.
The same is true for $J$ so that Assertion~\pref{th-conj-2} of
Theorem~\ref{th-conjugates} holds.
Hence in the present case, Assertion~\pref{th-conj-2} of 
Theorem~\ref{th-conjugates} corresponds to the condition that
the digraph associated to $P$ has two distinct final classes, that is the
opposite of Assertion~\pref{prop-ergodic-graph} in 
Theorem~\ref{thm:ErgodicityMarkovChain}.
\end{remark}

\section{Algorithmic issues}
\label{algorithmic-section}

Let us fix a state space $S=[n]$, and the nonempty finite actions spaces $A_i$ and $B_i$ of the two players.
We denote by $m_1$ the number of couples $(i,a)$ with $i \in S$ and $a \in A_i$,
and by $m_2$ the number of triples $(i,a,b)$ with $i \in S$, $a \in A_i$ and $b \in B_i$.
Then $n \leq m_1 \leq n m$ and $m_1 \leq m_2\leq n m^2$
where $m$ is the greatest cardinality of $A_i$ and $B_i, \enspace i \in S$,
and we have $m_1 \leq \size(G^+) = O(n m_1) \leq O(n^2 m)$
and $m_2 \leq \size(G^-) = O(n m_2) \leq O(n^2 m^2)$.

\subsection{Checking ergodicity}
\label{sec:CheckingErgodicity}

From Theorem~\ref{th-game-ergodicity-full}, the negation of the following problem is equivalent to the next one. 

\begin{problem}[\NonTrivialFP]
  \label{pb:NonTrivialFP}
  Does a given payment-free Shapley operator $F:\R^n \to \R^n$ 
  with finite action spaces have a non trivial fixed point, that is, does there exist $u \in \R^n \setminus \R \, \unit$ such that $u = F(u)$?
\end{problem}

\begin{problem}[\Ergodicity]
  \label{pb:ergodicity}
  Is a given game $\Gamma(r,P)$ with finite action spaces
  and bounded  payment $r$ ergodic?
\end{problem}

It is known that in a directed hypergraph $G$, the set of reachable nodes from a set $I$
can be computed in $O(\size(G))$ time~\cite{GLNP93}. Hence,
the following result follows from Proposition~\ref{prop:CheckingH1H2} and 
Corollary~\ref{coro:ComputationPhi}
and the property that, when the actions spaces are finite,
$\F'^-$, $\F'^+$, $\Phi'$ and $\Phi'^\star$ 
coincide with $\F^-$, $\F^+$, $\Phi$ and $\Phi^\star$ respectively.

\begin{proposition}
  \label{computephi}
  Let us fix a state space $S=[n]$, and the nonempty finite actions
  spaces $A_i$ and $B_i$ of the two players.
  For any payment-free Shapley operator $F$, and any $I,J\subset S$,
checking that $I\in \F^-$ and $J \in \F^+$ can be done 
  respectively in $O(nm_1)\leq O(n^2 m)$ and $O(nm_2)\leq O(n^2 m^2)$ time.
Moreover, for any $I \in \F^-$ and $J \in \F^+$,
  $\Phi(I)$ and $\Phi^\star(J)$ can be evaluated
  respectively in $O(nm_2)\leq O(n^2 m^2)$ and $O(nm_1)\leq O(n^2 m)$ time.
\end{proposition}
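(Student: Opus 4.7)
The plan is essentially a complexity bookkeeping argument: the proposition translates the structural results of Section~\ref{compact-section} into running-time bounds, using the classical fact (recalled in the paragraph preceding the statement) that reachability in a directed hypergraph $G$ can be decided in $O(\size(G))$ time via the algorithm of Gallo et al.~\cite{GLNP93}.

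First I would observe that with finite action spaces, Assumption~\ref{StandardAssumptions} holds automatically, so by Lemma~\ref{lem:BooleanReformulation} the sets $\F^-$, $\F^+$ and maps $\Phi$, $\Phi^\star$ coincide with their primed counterparts $\F'^-$, $\F'^+$, $\Phi'$, $\Phi'^\star$ defined from the Boolean abstractions $F^+$ and $F^-$. This identification is what allows the entire problem to be reduced to hypergraph reachability on $\bar G^+$ and $\bar G^-$. I would also recall the size bounds $\size(\bar G^+) = O(nm_1) \leq O(n^2 m)$ and $\size(\bar G^-) = O(nm_2) \leq O(n^2 m^2)$ noted at the beginning of Section~\ref{algorithmic-section}.

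For the membership tests, I would invoke Proposition~\ref{prop:CheckingH1H2}: deciding whether $I \in \F^-$ amounts to checking that $\Reach(S \setminus I, \bar G^+) \cap S = S \setminus I$, which is a single reachability computation in $\bar G^+$ and hence takes $O(\size(\bar G^+)) = O(nm_1)$ time; symmetrically, $J \in \F^+$ is tested by one reachability computation in $\bar G^-$, costing $O(nm_2)$. For the evaluation of the Galois connection, Corollary~\ref{coro:ComputationPhi} gives the explicit formulas $\Phi(I) = S \setminus (\Reach(I, \bar G^-) \cap S)$ and $\Phi^\star(J) = S \setminus (\Reach(J, \bar G^+) \cap S)$, so again one reachability computation suffices, now on the opposite hypergraph from the corresponding membership test. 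This yields the advertised $O(nm_2)$ bound for $\Phi(I)$ and $O(nm_1)$ bound for $\Phi^\star(J)$.

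There is no genuine obstacle here: all the conceptual work was done in Section~\ref{sec-hypergraph}, and the proof is a one-line invocation of each ingredient followed by a size bound. The only thing to be slightly careful about is matching correctly which hypergraph ($\bar G^+$ vs.\ $\bar G^-$) governs which operation, since the asymmetry between $F^+$ and $F^-$ produces the asymmetric bounds $O(nm_1)$ and $O(nm_2)$ — that asymmetry is exactly the reason checking $I \in \F^-$ is cheaper than checking $J \in \F^+$, while dually $\Phi^\star(J)$ is cheaper than $\Phi(I)$.
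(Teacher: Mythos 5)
Your proof is correct and follows essentially the same route as the paper, which likewise derives the proposition from the $O(\size(G))$ reachability algorithm of~\cite{GLNP93}, Proposition~\ref{prop:CheckingH1H2}, Corollary~\ref{coro:ComputationPhi}, the size bounds $\size(G^+)=O(nm_1)$ and $\size(G^-)=O(nm_2)$, and the coincidence of the primed and unprimed families when the action spaces are finite. You also match each operation to the correct hypergraph ($\bar G^+$ for testing $I\in\F^-$ and computing $\Phi^\star(J)$; $\bar G^-$ for testing $J\in\F^+$ and computing $\Phi(I)$), which is the only delicate point.
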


Using Proposition~\ref{computephi} and Theorem~\ref{th-conjugates},
we obtain the following result,
which shows that checking the ergodicity of a game
is fixed parameter tractable: 
if the dimension is fixed, we can solve it in a time which is polynomial in the input-size. 
Thus, for instances of moderate dimension, but with large action spaces, the ergodicity condition can be checked efficiently.

\begin{theorem}\label{th-fptrac}
  Let us fix a state space $S=[n]$, and the nonempty finite actions
  spaces $A_i$ and $B_i$ of the two players.
  Let $r$ be a bounded transition payment and $P$ be a transition probability.
  Then, the ergodicity of $\Gamma(r,P)$,
  that is the property ``$\hat{T}$ has only trivial fixed points'', can
  be checked in $O(2^n nm_2) \leq O(2^nn^2m^2)$ time.
\end{theorem}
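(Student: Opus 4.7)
The plan is to reduce the ergodicity test to a finite enumeration controlled by Theorem~\ref{thm:Galois}. Since the action spaces are finite they are compact, and the transition probability $P$ is trivially separately continuous, so Assumption~\ref{StandardAssumptions} is automatically satisfied. Consequently the sets $\F^-$, $\F^+$ and the Galois connection $(\Phi,\Phi^\star)$ coincide with their boolean counterparts $\F'^-, \F'^+, \Phi', \Phi'^\star$, and Proposition~\ref{computephi} applies. By the equivalence \pref{i-thm-Galois}$\Leftrightarrow$\pref{iii-thm-Galois} of Theorem~\ref{thm:Galois}, the ergodicity of $\Gamma(r,P)$, that is, the absence of nontrivial fixed points of $\hat T = T(0,P)$, is equivalent to the nonexistence of a proper subset $I \subsetneq S$ with $I \in \F^-$ and $\Phi(I) \neq \emptyset$.

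The algorithm I propose enumerates all $2^n$ subsets $I$ of $S$. For each $I$ with $\emptyset \subsetneq I \subsetneq S$, first test membership in $\F^-$ by evaluating $F^+(\indic_{\Cpt{I}})$ and comparing it coordinatewise to $\indic_{\Cpt{I}}$; by Proposition~\ref{computephi} this step costs $O(nm_1)$ time. If $I \in \F^-$, compute $\Phi(I)$ via a reachability computation in the hypergraph $\bar G^-$ using Corollary~\ref{coro:ComputationPhi}, at cost $O(nm_2)$. Halt and report \emph{non-ergodic} as soon as some such $I$ yields $\Phi(I) \neq \emptyset$; otherwise, after the loop, report \emph{ergodic}. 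Summing the per-iteration cost $O(nm_1 + nm_2) = O(nm_2)$ over the at most $2^n$ subsets gives the announced bound $O(2^n n m_2) \leq O(2^n n^2 m^2)$.

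Correctness is immediate from the equivalence recalled above, the only caveat being to skip the two trivial subsets $I = \emptyset$ and $I = S$, which cannot serve as witnesses of a nontrivial fixed point. I do not anticipate any real obstacle: the theorem is essentially a bookkeeping corollary of the Galois-connection characterization combined with the hypergraph-reachability interpretation of $\Phi$. The only quantitative observation needed is that $m_1 \leq m_2$, so that the membership test is absorbed into the cost of evaluating $\Phi$.
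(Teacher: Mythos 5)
Your proposal is correct and follows essentially the same route as the paper, which derives the theorem from Proposition~\ref{computephi} together with the Galois-connection/hypergraph characterization (Theorems~\ref{thm:Galois} and~\ref{th-conjugates}): enumerate the $2^n$ subsets, test membership in $\F^-$ and nonemptiness of $\Phi(I)$ via hypergraph reachability at cost $O(nm_2)$ each. Your observations that Assumption~\ref{StandardAssumptions} holds automatically for finite action spaces and that the trivial subsets $\emptyset$ and $S$ must be excluded are exactly the points the paper relies on implicitly.
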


Problem~\NonTrivialFP\ has already been addressed in the deterministic case with finite action spaces by Yang an Zhao~\cite{YZ04}.
Suppose indeed that in the expression~\eqref{eq:PaymentFree}, the support of each transition probability is concentrated on just one state and consider the restriction of such an operator to the Boolean vectors $\{0,1\}^n$.
We obtain a monotone Boolean operator.

Recall that a Boolean operator, defined on Boolean vectors $\{0,1\}^n$, is expressed using the logical operators \textsf{AND}, \textsf{OR} and \textsf{NOT}.
Monotone Boolean operators are those whose expression involves only \textsf{AND} and \textsf{OR} operators.
These can be interpreted as $\min$ and $\max$ operators, respectively. So,
deterministic payment-free Shapley operators are equivalent
to monotone Boolean operators and Problem~\NonTrivialFP\ can be expressed in a simpler form.

\begin{problem}[\MonBool]
  \label{pb:MonBool}
  Does a given monotone Boolean operator have a nontrivial fixed point, that is, different from the zero vector and the unit vector?
\end{problem}

\begin{theorem}[Yang, Zhao~\cite{YZ04}]
  \label{thm:MonBool}
  Problem~\MonBool\ is NP-complete.
\end{theorem}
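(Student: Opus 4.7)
The plan is to show that \MonBool\ is in NP and then to prove NP-hardness by a polynomial-time reduction from 3-SAT.

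Membership in NP is immediate. Given a monotone Boolean operator $F = (F_1, \ldots, F_n)$ encoded as a list of monotone Boolean formulas built only from $\wedge$ and $\vee$, a nondeterministic algorithm guesses a Boolean vector $u \in \{0,1\}^n$, checks in time $O(n)$ that $u \notin \{\mathbf{0},\mathbf{1}\}$, and evaluates each $F_i(u)$ in time linear in the formula size to verify $F_i(u) = u_i$. Both the guess and the verification are polynomial, so \MonBool\ lies in NP.

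For NP-hardness I would reduce from 3-SAT. Given a 3-CNF formula $\varphi = C_1 \wedge \cdots \wedge C_m$ on variables $x_1, \ldots, x_n$, I construct a monotone Boolean operator $F_\varphi$ on $O(n)$ Boolean variables: a pair $y_i, \bar y_i$ for each $x_i$ (representing the positive and negative literal), together with a bounded number of auxiliary variables per SAT variable used in a consistency gadget. Denote by $\tilde\varphi(y, \bar y)$ the monotone Boolean formula obtained from $\varphi$ by replacing every occurrence of $x_i$ by $y_i$ and every occurrence of $\neg x_i$ by $\bar y_i$. The clause-satisfaction condition can be enforced by setting $F_{y_i} := y_i \wedge \tilde\varphi(y, \bar y)$ and $F_{\bar y_i} := \bar y_i \wedge \tilde\varphi(y, \bar y)$, so that any nontrivial fixed point necessarily satisfies $\tilde\varphi(y,\bar y) = 1$; the intended semantics is that such a fixed point encodes a satisfying assignment of $\varphi$ via $x_i = 1 \Leftrightarrow y_i = 1 \text{ and } \bar y_i = 0$.

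The main obstacle is the consistency constraint $y_i + \bar y_i = 1$: without it, a nontrivial fixed point might have $y_i = \bar y_i = 1$ for some $i$, which does not correspond to a Boolean assignment of $x_i$ and could make $\tilde\varphi$ satisfiable even when $\varphi$ is not. Since the predicate ``exactly one of $y_i, \bar y_i$ equals $1$'' is not monotone, it cannot be inserted as a coordinate of $F_\varphi$. I would address this by coupling $y_i$ and $\bar y_i$ through auxiliary variables and additional fixed-point equations designed so that any inconsistent configuration is forced, under iteration of the monotone operator, to propagate either to the all-zero vector or to the all-one vector, thereby eliminating such inconsistencies from the set of nontrivial fixed points. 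Verifying that this gadget indeed works as intended, while keeping $F_\varphi$ monotone and of size polynomial in $|\varphi|$, is the technical heart of the proof; once completed, the nontrivial fixed points of $F_\varphi$ are in bijection with the satisfying assignments of $\varphi$, yielding the desired polynomial reduction and the NP-completeness of \MonBool.
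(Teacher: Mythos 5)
This statement is quoted in the paper from Yang and Zhao~\cite{YZ04} without proof, so the only question is whether your blind attempt actually establishes it. Your membership argument is fine: guessing $u\in\{0,1\}^n$ with $u\neq \mathbf{0},\unit$ and evaluating the monotone formulas coordinate-wise is a polynomial certificate, so \MonBool\ is in NP.

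The NP-hardness half, however, has a genuine gap: the consistency gadget that you yourself identify as ``the technical heart of the proof'' is never constructed. This is not a detail that can be waved away. With only the coordinates $F_{y_i}=y_i\wedge\tilde\varphi$ and $F_{\bar y_i}=\bar y_i\wedge\tilde\varphi$, the configuration with $y_i=\bar y_i=1$ for all $i$ makes every clause of $\tilde\varphi$ true and is therefore a fixed point of those coordinates for \emph{every} 3-CNF $\varphi$, satisfiable or not; everything hinges on auxiliary coordinates that force such inconsistent configurations to collapse to $\mathbf 0$ or $\unit$ globally, and you give no construction, no equations, and no argument that a monotone operator with this propagation behaviour exists and has polynomial size. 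Monotone operators are quite constrained here: since they preserve the coordinatewise order, one must also rule out the many ``intermediate'' fixed points that monotone maps typically admit (e.g.\ fixed points in which only some of the $y_i,\bar y_i$ are set), so the claimed bijection between nontrivial fixed points and satisfying assignments is far from automatic even granting a consistency gadget. As written, the reduction is a plan rather than a proof; to complete it you would need to exhibit the gadget explicitly and verify both directions of the correctness claim (every satisfying assignment yields a nontrivial fixed point, and every nontrivial fixed point projects to a consistent satisfying assignment), or else follow the actual route of~\cite{YZ04}.
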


Using this result and the characterizations of the previous section, we obtain:
\begin{corollary}
  \label{thm:NonTrivialFP}
  Problem~\NonTrivialFP\ is NP-complete.
\end{corollary}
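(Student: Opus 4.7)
The plan is to establish NP-hardness by reduction from \MonBool (invoking Theorem~\ref{thm:MonBool}) and NP membership via the combinatorial characterization of nontrivial fixed points furnished by Theorem~\ref{thm:Galois}.

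For NP-hardness, given an instance $B:\{0,1\}^n \to \{0,1\}^n$ of \MonBool, I associate the deterministic payment-free Shapley operator $F:\R^n \to \R^n$ obtained by interpreting each \textsf{AND} (resp.\ \textsf{OR}) in the expression of $B_i$ as $\min$ (resp.\ $\max$), with each resulting variable $x_j$ encoding a transition with probability one to state $j$. The encoding size of $F$ is polynomial in that of $B$, and by construction $F$ maps $\{0,1\}^n$ into itself and agrees with $B$ on this set. It remains to show that $F$ has a nontrivial fixed point in $\R^n$ if and only if $B$ has a nontrivial fixed point in $\{0,1\}^n$. The ``if'' direction is immediate, since any Boolean vector other than $0$ and $\unit$ lies in $\R^n \setminus \R\unit$. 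For the converse, suppose $F$ admits a nontrivial fixed point $u$, and set $I := \argmin u$, $J := \argmax u$. By Lemma~\ref{lem:NontrivialFP}, $I \in \F^-$, $J \in \F^+$, and they are nonempty and disjoint, so $\Phi(I) \supseteq J$ is nonempty. Using Lemma~\ref{lem:ArgmaxPhi}, the vector $v := F^\omega(\indic_{\Cpt{I}})$ is a fixed point of $F$ with $\argmax v = \Phi(I)$; moreover $v \in \{0,1\}^n$ because $F$ preserves $\{0,1\}^n$ and the sequence $(F^k(\indic_{\Cpt{I}}))_k$ is nonincreasing in the finite lattice $\{0,1\}^n$. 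Since $\emptyset \neq \Phi(I) \subset \Cpt{I} \neq S$, we have $v \neq 0$ and $v \neq \unit$, so $v$ is a nontrivial fixed point of $B$, completing the reduction.

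For NP membership, Theorem~\ref{thm:Galois} tells us that $F$ has a nontrivial fixed point if and only if there exist nonempty disjoint subsets $I, J \subset S$ with $I \in \F^-$ and $J \in \F^+$. The pair $(I,J)$, encoded as two subsets of $[n]$, is a certificate of size $O(n)$, hence polynomial in the input. Proposition~\ref{computephi} guarantees that the conditions $I \in \F^-$ and $J \in \F^+$ can be checked in time polynomial in the input size (namely $O(nm_1)$ and $O(nm_2)$, respectively), while nonemptiness and disjointness are trivial to verify. This places \NonTrivialFP\ in NP and, together with the reduction above, yields NP-completeness.

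The only mildly delicate step is the reverse direction of the reduction, since $F$ could \emph{a priori} have only nontrivial fixed points with non-Boolean entries; however, the Galois-connection machinery developed in Section~\ref{galois-section} allows us to manufacture a Boolean nontrivial fixed point from any real one in just a few lines via $F^\omega(\indic_{\Cpt{I}})$, so no serious obstacle remains once Lemmas~\ref{lem:NontrivialFP} and~\ref{lem:ArgmaxPhi} are in hand.
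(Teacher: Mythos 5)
Your proof is correct and follows essentially the same route as the paper: NP-hardness by identifying deterministic payment-free Shapley operators with monotone Boolean operators and invoking Theorem~\ref{thm:MonBool}, and NP membership via a polynomial-size certificate validated through Theorem~\ref{thm:Galois} and Proposition~\ref{computephi}. The only cosmetic differences are that you use the pair $(I,J)$ from condition~\pref{ii-thm-Galois} as the certificate where the paper uses the single set $\argmin u$ from condition~\pref{iii-thm-Galois}, and that you spell out (correctly, via Lemma~\ref{lem:ArgmaxPhi}) the step the paper leaves implicit, namely that a real nontrivial fixed point of the deterministic operator yields a Boolean one.
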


\begin{proof}
  As a direct consequence of Theorem~\ref{thm:MonBool}, we get that Problem~\NonTrivialFP\ is NP-hard.
  We now show that it is in NP.
  Suppose that a payment-free Shapley operator $F$ has a nontrivial fixed point $u$.
  Then  $\argmin u$ and $\argmax u$ are proper subsets of states, and by Lemma~\ref{lem:NontrivialFPandGaloisConnection}, $\argmin u\in\F^-$ and $\Phi(\argmin u)\supset \argmax u\neq \emptyset$. Hence, $\argmin u\in\F^-$ 
  is a proper subset of states such that  $\Phi(\argmin u)$ is nonempty, and
  we know by Theorem~\ref{thm:Galois} that these conditions are sufficient to guarantee the existence of a nontrivial fixed point.
  Furthermore, these conditions can be checked in polynomial time (this is a consequence of Proposition~\ref{computephi}).
  Hence, $\argmin u$ is a short certificate to Problem~\NonTrivialFP.
\end{proof}

\subsection{Problem \IisMin}

A way to analyze Problem~\NonTrivialFP\ would be to characterize the fixed point set $\mathcal{W} := \{ w \in \R^n \mid F(w)=w \}$ of a payment-free Shapley operator $F$.
This problem also arises in several other situations.
First in Proposition~\ref{prop:GivenMeanPayoff}, we have shown that $\mathcal{W}$ is exactly the set of possible mean payoff vectors of the game $\Gamma(r,P)$ when the transition payment $r$ varies.
Next, in~\cite{Eve57}, Everett introduced the notion of recursive games which are modified versions of the game  $\Gamma(0,P)$ in which payments occur in some absorbing states.
These games are well posed if there exists a unique element of $\mathcal{W}$ with prescribed values in the absorbing states.
Finally, $\mathcal{W}$ allows one to determine the set $\mathcal{E}$ of solutions $u$ of the
ergodic equation $T(u)=\lambda\unit +u$.
Indeed, it is shown in~\cite{AGN12} that if the Shapley operator $T$ is piecewise affine, if $u$ is any point in $\mathcal{E}$, and if $\mathcal{V}$ is a neighborhood of $0$, then, $\mathcal{E} \cap (u+\mathcal{V}) = u+ \{ w \in \mathcal{V} \mid F(w)=w \} = u + (\mathcal{V} \cap \mathcal{W})$, where $F$ is a payment-free Shapley operator (the semidifferential of $T$ at point $u$).
Hence, the local study of the ergodic equation reduces to the characterization of the fixed point
set $\mathcal{W}$.

In an attempt to understand the structure of the set of
fixed points of a payment-free Shapley operator, we shall consider the
following simpler problem.
\begin{problem}[\IisMin]
  \label{pb:I=Min}
  Let $I$ be a subset of $S$.
  Does a given payment-free Shapley operator with finite action spaces have a fixed point $u$ satisfying $I = \argmin u$?
\end{problem}

We know from Lemma~\ref{lem:NontrivialFPandGaloisConnection} that a necessary condition is $I \in \F^-$.
Under Assumption~\ref{StandardAssumptions} (which is the case if action spaces are finite), this is equivalent to $F^+(\indic_\Cpt{I}) \leq \indic_\Cpt{I}$.
In fact, there is a stronger necessary condition.
\begin{lemma}
  \label{lem:NecessaryCondition}
  Let $F$ be the payment-free Shapley operator associated with actions spaces $A_i$ and $B_i$ of the two players, and a transition probability $P$, such that Assumption~\ref{StandardAssumptions} holds
  and let $I \subset S$.
  Suppose that $F$ has a fixed point $u$ verifying $\argmin u = I$.
  Then, $F^+(\indic_\Cpt{I}) = \indic_\Cpt{I}$.
\end{lemma}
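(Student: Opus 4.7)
The plan is to prove the two inequalities $F^+(\indic_\Cpt{I})\leq \indic_\Cpt{I}$ and $F^+(\indic_\Cpt{I})\geq \indic_\Cpt{I}$ separately, in both cases by reading off information from the pointwise fixed-point equation $u=F(u)$ and exploiting the fact that, under Assumption~\ref{StandardAssumptions}, the infimum and inner supremum defining $F$ are attained. Set $m:=\min_{j\in S}u_j$, so that $u\geq m\unit$ and $\{j\in S:u_j=m\}=I$.

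First, I would recall from the proof of Lemma~\ref{lem:BooleanReformulation} that under Assumption~\ref{StandardAssumptions}, for each $i\in S$ there exists $a_i^\star\in A_i$ such that $[F(u)]_i=\max_{b\in B_i}P_i^{a_i^\star b}u$, and moreover the inner maximum over $b$ is itself attained. This attainment is all I need.

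Now I handle the case $i\in I$. Since $u_i=m$ and $[F(u)]_i=u_i$, we have $\max_{b\in B_i}P_i^{a_i^\star b}u=m$. Because $P_i^{a_i^\star b}$ is stochastic and $u\geq m\unit$, we have $P_i^{a_i^\star b}u\geq m$ for every $b\in B_i$, hence equality holds for every $b$. But $u_j>m$ whenever $j\notin I$, so the support of $P_i^{a_i^\star b}$ is contained in $I$ for every $b\in B_i$. Substituting this action $a_i^\star$ into the definition~\eqref{defF+} of $F^+$, we get $[F^+(\indic_\Cpt{I})]_i\leq \max_{b\in B_i}\max_{j:(P_i^{a_i^\star b})_j>0}[\indic_\Cpt{I}]_j=0$, i.e.\ $[F^+(\indic_\Cpt{I})]_i=0=[\indic_\Cpt{I}]_i$.

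Next I handle the case $i\notin I$, where $u_i>m$, by contradiction. Suppose $[F^+(\indic_\Cpt{I})]_i=0$. By definition~\eqref{defF+}, there exists $a\in A_i$ such that for every $b\in B_i$ we have $\max_{j:(P_i^{ab})_j>0}[\indic_\Cpt{I}]_j=0$, i.e.\ the support of $P_i^{ab}$ is contained in $I$. Since $u$ equals $m$ on $I$ and $P_i^{ab}$ is stochastic, this yields $P_i^{ab}u=m$ for every $b\in B_i$, and therefore $[F(u)]_i=\inf_{a'\in A_i}\sup_{b\in B_i}P_i^{a'b}u\leq \sup_{b\in B_i}P_i^{ab}u=m$, contradicting $[F(u)]_i=u_i>m$. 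Hence $[F^+(\indic_\Cpt{I})]_i=1=[\indic_\Cpt{I}]_i$.

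Combining both cases yields $F^+(\indic_\Cpt{I})=\indic_\Cpt{I}$. The argument is essentially a direct reading of the fixed-point equation; the only non-routine ingredient is the attainment of the outer infimum over $a$ in the first case, which is already secured by Assumption~\ref{StandardAssumptions} via the argument already used in the proof of Lemma~\ref{lem:BooleanReformulation}.
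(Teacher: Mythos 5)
Your proof is correct. It uses the same decomposition as the paper --- proving $F^+(\indic_\Cpt{I})\leq\indic_\Cpt{I}$ and $F^+(\indic_\Cpt{I})\geq\indic_\Cpt{I}$ separately --- but executes both halves by direct pointwise arguments instead of invoking the earlier machinery. For the half $F^+(\indic_\Cpt{I})\leq\indic_\Cpt{I}$ (your case $i\in I$), you re-derive inline, from the attainment of the outer infimum under Assumption~\ref{StandardAssumptions} and the support characterization~\eqref{equiv0}, exactly what the paper obtains by combining Lemma~\ref{lem:NontrivialFPandGaloisConnection} (which gives $I=\argmin u\in\F^-$) with the implication \eqref{eq:H1}$\Rightarrow$\eqref{eq:H1'} of Lemma~\ref{lem:BooleanReformulation}. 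For the half $\indic_\Cpt{I}\leq F^+(\indic_\Cpt{I})$ (your case $i\notin I$), the paper has a slicker global argument: after normalizing so that $u\leq\indic_\Cpt{I}$, it uses $F\leq F^+$ and the monotonicity of $F^+$ to get $u=F(u)\leq F^+(u)\leq F^+(\indic_\Cpt{I})$, and the Booleanity of the right-hand side forces $[F^+(\indic_\Cpt{I})]_i=1$ wherever $u_i>0$; your contradiction argument, which unwinds the definition~\eqref{defF+} (legitimately, since the extrema there range over a finite set of values and are attained), reaches the same conclusion. The ingredients are identical, so the difference is one of packaging: your version is self-contained and does not depend on Lemmas~\ref{lem:NontrivialFP}--\ref{lem:BooleanReformulation}, at the cost of repeating part of their proofs.
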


\begin{proof}
  If $I = S$, the conclusion of the lemma is trivial.
  Assume $I\neq S$ and let $u$ be a fixed point of $F$ verifying $\argmin u = I$.
  We may suppose that $\min_{i \in S} u_i = 0$ and $\max_{i \in S} u_i = 1$, so that $u \leq \indic_\Cpt{I}$.
  Since $F \leq F^+$, we get $u=F(u)\leq F^+(u) \leq F^+(\indic_\Cpt{I})$.
  The last vector is Boolean, so this inequality implies $\indic_\Cpt{I} \leq F^+(\indic_\Cpt{I})$.
  Moreover, according to Lemma~\ref{lem:NontrivialFPandGaloisConnection} and Lemma~\ref{lem:BooleanReformulation}, we already know that $F^+(\indic_\Cpt{I}) \leq \indic_\Cpt{I}$.
  Hence the result.
\end{proof}

We continue with another necessary condition.
\begin{lemma}
  \label{lem:EmptyPhi}
  Let $F$ be the payment-free Shapley operator associated with actions spaces $A_i$ and $B_i$ of the two players, and a transition probability $P$,
  and let $I \in \F^-$.
  If $\Phi(I) = \emptyset$, then $F$ has no nontrivial fixed point $u$ satisfying $I \subset \argmin u$.
\end{lemma}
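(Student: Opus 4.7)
The plan is to argue by contradiction, using only the properties of the Galois connection established earlier, together with Lemma~\ref{lem:NontrivialFPandGaloisConnection}.

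Suppose toward a contradiction that $F$ admits a nontrivial fixed point $u$ with $I \subset \argmin u$. Set $I' := \argmin u$ and $J' := \argmax u$. Since $u$ is nontrivial, both $I'$ and $J'$ are nonempty (and proper). By Lemma~\ref{lem:NontrivialFPandGaloisConnection}, we have $I' \in \F^-$, $J' \in \F^+$, and the crucial relation $J' \subset \Phi(I')$. In particular, $\Phi(I')$ is nonempty.

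Now I would use the antitonicity of $\Phi$, which is one of the characterizing properties of the Galois connection $(\Phi, \Phi^\star)$ recalled in Section~\ref{section-galois}. From $I \subset I'$ we deduce $\Phi(I') \subset \Phi(I) = \emptyset$, so $\Phi(I') = \emptyset$. This contradicts $J' \subset \Phi(I')$ together with $J' \neq \emptyset$, and completes the proof.

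There is no real obstacle: the argument is a short syllogism combining (i) the fact that any nontrivial fixed point $u$ gives rise to the pair $(\argmin u, \argmax u)$ of conjugate-like sets with $\argmax u \subset \Phi(\argmin u)$, and (ii) the antitonicity of $\Phi$. The only subtle point worth noting is that one must verify $\argmax u \neq \emptyset$, which is automatic since $u \in \R^n$ with $n \geq 1$ and nontriviality ensures that the maximum is attained on a nonempty set strictly separated from $\argmin u$.
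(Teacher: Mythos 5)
Your proof is correct and follows essentially the same route as the paper's: both arguments invoke Lemma~\ref{lem:NontrivialFPandGaloisConnection} to obtain $\argmax u \subset \Phi(\argmin u)$ with $\argmax u \neq \emptyset$, and then use the antitonicity of $\Phi$ applied to $I \subset \argmin u$ to contradict $\Phi(I) = \emptyset$. No gaps.
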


\begin{proof}
  Suppose on the contrary that there is a nontrivial fixed point $u$ such that $I \subset \argmin u$.
  Let $I' = \argmin u$ and $J := \argmax u$.
  We know from Lemma~\ref{lem:NontrivialFPandGaloisConnection} that $I' \in \F^-$, $J \in \F^+$ and that $J \subset \Phi(I')$.
  Since $I \subset I'$, we have $\Phi(I') \subset  \Phi(I)$.
  Hence $J \subset \Phi(I)$, and since $J \neq \emptyset$, we get
  a contradiction.
\end{proof}

If $I=\emptyset$, the answer to Problem~\IisMin\ is trivially negative,
and if $I=S$ it is trivially positive. 
Assume now that $I$ is a proper subset of $S$.
The above results show that
a necessary condition to have a positive answer to problem~\IisMin\ 
is that $I \in \F^-$  and  $\Phi(I) \neq \emptyset$.
Moreover, by Lemma~\ref{lem:ArgmaxPhi}, a sufficient condition to have a positive answer to problem~\IisMin\ is that $I$ is closed with respect to the Galois connection $(\Phi, \Phi^\star)$.

It remains to examine the case in which $I \in \F^-$ is proper,
with $\Phi(I) \neq \emptyset$ and $I \neq \clo{I}$, where for $I\in \F^-$, $\clo{I} := \Phi^\star (\Phi(I))$ denotes the closure of $I$ with respect to the Galois connection $(\Phi, \Phi^\star)$ (likewise, for $J \in \F^+$, $\clo{J}$ is the closure of $J$ with respect to the Galois connection $(\Phi^\star, \Phi)$).
This implies in particular that $\clo{I} \neq S$ (otherwise we would have $\Phi(I) = \Phi(\clo{I}) = \emptyset$).

Assume that Assumption~\ref{StandardAssumptions} holds.
We define a reduced operator $F^\vartriangle: \R^\clo{I} \to \R^\clo{I}$ as follows.
According to the game-theoretic interpretation (Proposition~\ref{prop:GameInterpretation}), we know that player \MIN\ can force the state of the game $\Gamma(0,P)$ (which has $F$ as Shapley operator) to stay in $\clo{I}$.
Hence, we consider the actions of player \MIN\ that achieve this goal: for every $i \in \clo{I}$, let
\[ A^\vartriangle_i := \{ a \in A_i \mid \forall b \in B_i\; \text{and} \; j\in \Cpt{\clo{I}},\; (P_i^{a b})_j =0 \} \enspace . \]
These sets are nonempty, since $\clo{I} \in \F^-=\F'^-$.
Another formulation of $A^\vartriangle_i$ is the following:
\[ A^\vartriangle_i := \{ a \in A_i \mid \max_{b \in B_i} P_i^{a b} \indic_\Cpt{\clo{I}} = [F(\indic_\Cpt{\clo{I}})]_i = 0 \} \enspace . \]
For $x \in \R^n$ and $K \subset S$, we denote by $x_K$ the restriction of $x$ to $\R^K$. We apply the same notation to elements of $\Delta(S)$.
Then, for every $i \in \clo{I}$, let
\[ [ F^\vartriangle(x) ]_i := \min_{a \in A^\vartriangle_i} \max_{b \in B_i} (P_i^{a b})_{\clo{I}} \, x, \quad x \in \R^{\clo{I}} \enspace . \]
From the definition of $A^\vartriangle_i$, we have that $(P_i^{a b})_{\clo{I}}\in\Delta(\clo{I})$ for all $i\in \clo{I}$, $a\in  A^\vartriangle_i$ and $b\in B_i$.
Hence, $F^\vartriangle$ is a payment-free Shapley operator over $\clo{I}$,
with actions spaces $A^\vartriangle_i$ and $B_i$ and transition probability
$P^\vartriangle:(i,a,b)\mapsto (P_i^{a b})_{\clo{I}}$.
Moreover, we have
\begin{equation}
  \label{proptriangle}
  [ F^\vartriangle(x_{\clo{I}}) ]_i =\min_{a \in A^\vartriangle_i} \max_{b \in B_i} P_i^{a b} x, \quad i \in\clo{I},\;  x\in  \R^n\enspace. 
\end{equation}

\begin{theorem}
  \label{thm:Reduction}
  Let $F$ be the payment-free Shapley operator associated with finite actions spaces $A_i$ and $B_i$ of the two players, and a transition probability $P$.
  Let $I \in \F^-$ be proper, such 
  that $\Phi(I) \neq \emptyset$ and $I \neq \clo{I}$.
  Then $F$ has a fixed point whose $\argmin$ is $I$ if, and only if, the same holds for the reduced operator $F^\vartriangle$.
\end{theorem}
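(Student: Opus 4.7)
I would prove both directions via a monotone iteration of the relevant operator. A common auxiliary observation is used in both: since $I \in \F^-$, condition~\eqref{eq:H1'} (equivalently, Proposition~\ref{prop:GameInterpretation}) supplies, for every $i \in I$, an action $a^* \in A_i$ whose transitions $P_i^{a^* b}$ are supported in $I$ for every $b \in B_i$; since $I \subset \clo{I}$, such $a^*$ automatically belongs to $A^\vartriangle_i$. Consequently, $[F(x)]_i = 0$ (and $[F^\vartriangle(x)]_i = 0$) whenever $x \geq 0$ and $x|_I = 0$, for every $i \in I$.

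For the direction $(\Rightarrow)$, normalize the fixed point $u$ of $F$ so that $u \geq 0$ and $u|_I = 0$, and set $v^0 := u|_{\clo{I}}$. Since $a \in A^\vartriangle_i$ gives $P_i^{ab} u = (P_i^{ab})_{\clo{I}} v^0$, the inclusion $A^\vartriangle_i \subset A_i$ yields $F^\vartriangle(v^0) \geq v^0$. The sequence $(F^\vartriangle)^k(v^0)$ is then nondecreasing and bounded above by $(\max v^0)\unit$ (using additive homogeneity), hence converges to a fixed point $v \geq v^0$ of $F^\vartriangle$. By the auxiliary observation and induction on $k$, one gets $(F^\vartriangle)^k(v^0)|_I = 0$ for all $k$, so $v|_I = 0$; whereas $v \geq v^0 > 0$ on $\clo{I} \setminus I$. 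Hence $\argmin v = I$.

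For the substantive direction $(\Leftarrow)$, normalize $v$ so that $v \geq 0$ and $v|_I = 0$. Apply Lemma~\ref{lem:ArgmaxPhi} to the closed set $\clo{I}$: the limit $w^* := F^\omega(\indic_{\Cpt{\clo{I}}})$ is a fixed point of $F$ in $[0,1]^n$ with $\argmin w^* = \clo{I}$ and $\argmax w^* = \Phi(\clo{I}) = \Phi(I)$; in particular $w^*|_{\clo{I}} = 0$ and $w^* > 0$ on $\Cpt{\clo{I}}$. Set $u_C := \bar v + C w^*$, with $\bar v$ the extension of $v$ by $0$ outside $\clo{I}$ and $C > 0$. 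On $\clo{I}$, actions $a \in A^\vartriangle_i$ reproduce $\min_{a \in A^\vartriangle_i} \max_b P_i^{ab} u_C = [F^\vartriangle(v)]_i = v_i$, while any $a \in A_i \setminus A^\vartriangle_i$ admits a \MAX-response $b$ with $(P_i^{ab})_{\Cpt{\clo{I}}} \unit > 0$, giving $\max_b P_i^{ab} u_C \geq C c_{i,a}$ with $c_{i,a} > 0$. Finiteness of the action spaces makes $c := \min_{i,a} c_{i,a} > 0$, so for $C$ sufficiently large the minimum over $A_i$ is pinned inside $A^\vartriangle_i$ and $[F(u_C)]_i = v_i = (u_C)_i$. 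On $\Cpt{\clo{I}}$, $\bar v \geq 0$ and positive homogeneity of $F$ give $[F(u_C)]_j \geq [F(C w^*)]_j = C w^*_j = (u_C)_j$.

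Combining, $F(u_C) \geq u_C$. The iterates $F^k(u_C)$ are then nondecreasing and bounded in sup norm by $\|u_C\|_\infty$ (from nonexpansiveness of $F$ together with $F(0) = 0$), hence converge to a fixed point $u^*$ of $F$. Since $u^* \geq u_C > 0$ on $S \setminus I$, and since the auxiliary observation applied to $F$ yields, by induction starting from $u_C|_I = 0$, that $F^k(u_C)|_I = 0$ for all $k$, we conclude $u^*|_I = 0$ and so $\argmin u^* = I$. The main obstacle in the argument is producing the exact equality $[F(u_C)]_i = v_i$ on $\clo{I}$ rather than just an inequality; the role of the additive correction $C w^*$ is precisely to make the ``escape'' actions $A_i \setminus A^\vartriangle_i$ strictly suboptimal for \MIN, thereby pinning the minimum on $A^\vartriangle_i$ and reducing the fixed point equation on $\clo{I}$ to that of $F^\vartriangle$.
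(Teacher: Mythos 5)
Your proof is correct, and the two directions compare differently with the paper's. The ``only if'' direction is essentially the paper's argument: both start from the inequality $F^\vartriangle(u_{\clo{I}})\geq u_{\clo{I}}$ coming from $A_i^\vartriangle\subset A_i$, iterate $F^\vartriangle$ to a fixed point $v\geq u_{\clo{I}}$, and use the fact that $I$ satisfies~\eqref{eq:H1} for $F^\vartriangle$ (your ``auxiliary observation'') to keep $v$ equal to $0$ on $I$. The ``if'' direction is where you genuinely diverge. The paper invokes semidifferentiability: since the action spaces are finite, $F$ is piecewise affine, the exact first-order expansion $F(w+x)=F(w)+F'_w(x)$ holds near $w=F^\omega(\indic_{\Cpt{\clo{I}}})$, the semiderivative restricted to $\clo{I}$ is identified with $F^\vartriangle$, and the fixed point is produced in one stroke as $w+\varepsilon\bar z$ with $\bar z=(F'_w)^\omega(z)$. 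You instead build the sub-fixed-point $u_C=\bar v+Cw^*$ and verify $F(u_C)\geq u_C$ by hand: on $\clo{I}$ the actions of $A_i\setminus A_i^\vartriangle$ incur a cost at least $Cc$ with $c>0$ (finiteness of the action spaces is used here, exactly where the paper uses it for piecewise affineness), so for $C$ large the minimum is pinned on $A_i^\vartriangle$ and $[F(u_C)]_i=v_i$, while on $\Cpt{\clo{I}}$ monotonicity and positive homogeneity give $F(u_C)\geq Cw^*$. A monotone iteration then yields a fixed point $u^*\geq u_C$, and the invariance of $\{x\geq 0 : x|_I=0\}$ under $F$ forces $\argmin u^*=I$. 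Your route is more elementary in that it avoids the semidifferential machinery of~\cite{RW98,AGN12} entirely, at the price of a second limiting process and of obtaining the fixed point only as a limit rather than in closed form; the underlying mechanism---making the ``escape'' actions of player \MIN\ strictly suboptimal by loading a large multiple of $w^*$ on $\Cpt{\clo{I}}$---is the same phenomenon that, in the paper, is captured by the statement $A_i(w)=A_i^\vartriangle$ for $i\in\clo{I}$.
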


\begin{proof}
  We first show the ``only if'' part of the theorem.
  Let $u$ be a fixed point of $F$ such that $I = \argmin u$.
  Recall that $I \neq S$ by hypothesis.
  So we may suppose
  that $\max_{i \in S} u_i = 1$ and $\min_{i \in S} u_i = 0$.
  
  It follows from~\eqref{proptriangle} that $[F(u)]_\clo{I} \leq F^\vartriangle(u_\clo{I})$.
  Hence $u_\clo{I} = [F(u)]_\clo{I}\leq F^\vartriangle(u_\clo{I})$,
  so that  $(F^\vartriangle)^\omega(u_\clo{I})$ exists.
  Let us denote it by $v$.
  It is a fixed point of $F^\vartriangle$ and it satisfies $u_\clo{I} \leq v$.
  As a consequence, $v_i > 0$ for every $i \in \clo{I} \setminus I$.
  
  Furthermore, Lemma~\ref{lem:NontrivialFP} implies that $I\in \F^-$,
  meaning that $F(\indic_{S \setminus I})\leq \indic_{S \setminus I}$.
  Then, for all $i\in I$, there exists $a\in A_i$ such that for all
  $b\in B_i$, $P_i^{ab} \indic_{S \setminus I}=0$.
  Since $I\subset \clo{I}$, this implies that $(P_i^{ab})_j=0$ for all
  $j\in S\setminus\clo{I}$,
  and since this holds for all $b\in B_i$, we deduce that $a\in A_i^\vartriangle$,
  by definition. Hence, 
  $\min_{a\in A_i^\vartriangle} \max_{b\in B_i} P_i^{ab}\indic_{S \setminus I}= 0$,
  and using~\eqref{proptriangle}, we deduce that
  $F^\vartriangle(\indic_{\clo{I} \setminus I})=0$ for all $i \in I$.
  Therefore  $F^\vartriangle (\indic_{\clo{I} \setminus I}) \leq \indic_{\clo{I}\setminus I}$,
  which means that $I$ still satisfies condition~\eqref{eq:H1} with the operator $F^\vartriangle$.
  Since $u_\clo{I} \leq \unit_{\clo{I} \setminus I}$, it follows that 
  $v=(F^\vartriangle)^\omega(u_\clo{I})\leq (F^\vartriangle)^\omega (\unit_{\clo{I} \setminus I})
  \leq \unit_{\clo{I} \setminus I}$. Hence $v_i = 0$ for every $i \in I$,
  which shows that $\argmin v=I$.
  
  We now prove the ``if'' part of the theorem.
  Assume that $F^\vartriangle$ has a fixed point $v$ such that $\argmin v = I$.
  We may suppose that $\max_{i \in S} v_i = 1$ and $\min_{i \in S} v_i = 0$.
  
  Let $w = F^\omega(\indic_\Cpt{\clo{I}})$.
  We know from Lemma~\ref{lem:ArgmaxPhi} that $w$ is a fixed point of $F$ such that $\argmin w = \clo I$.
  Thus, it satisfies $w_\clo{I} = \zero$ and $w_s >0$ for every $s \in \Cpt{\clo{I}}$, hence $w \geq\alpha\indic_\Cpt{\clo{I}}$ for some $\alpha>0$.
  
  We next use the notions of semidifferentiability and semiderivative, referring the reader to~\cite{RW98,AGN12} for the definition of these notions
  and for their basic properties.
  Since the action spaces are finite, $F$ is piecewise affine and so it is semidifferentiable at point $w$.
  Furthermore, denoting $F'_w$ its semiderivative at $w$, there is a neighborhood $\mathcal V$ of $\zero$ such that
  \begin{equation}
    \label{eq:ExactFirstOrder}
    F(w+x) = F(w) + F'_w(x), \quad \forall x \in \mathcal{V} \enspace .
  \end{equation}
  We next give a formula for $F'_w$.
  For every $i \in S$, let
  \[ A_i(w) := \big\{ a \in A_i \mid \max_{b \in B_i} P_i^{a b} w = [F(w)]_i \big\} \]
  and for $a \in A_i(w)$, let
  \[ B_i^a(w) := \big\{ b \in B_i \mid P_i^{a b} w = [F(w)]_i \big\} \enspace . \]
  Then we have, for every $x \in \R^n$ and every $i \in S$,
  \[ \big[ F'_w(x) \big]_i = \min_{a \in A_i(w)} \max_{b \in B_i^a(w)} P_i^{a b} x \enspace . \]
  
  Observe that for $i \in \clo{I}$, we have $A_i(w) = A_i^\vartriangle$ and $B_i^a(w) = B_i$, for every $a \in A_i(w)$.
  This is because $[F(w)]_i = w_i = 0$ and $\alpha\indic_\Cpt{\clo{I}}
  \leq w \leq\indic_\Cpt{\clo{I}}$, then $a\in  A_i(w)$ if and only
  if  $\max_{b \in B_i} P_i^{a b} \indic_\Cpt{\clo{I}}=0$ and
  $b\in B_i^a(w)$ if and only if $P_i^{a b} \indic_\Cpt{\clo{I}}=0$.
  Then, using~\eqref{proptriangle}, 
  we obtain $[F'_w(x)]_\clo{I} = F^\vartriangle(x_\clo{I})$ for every $x \in \R^n$.
  
  We introduce now the vector $z \in [0,1]^n$ given by $z_\clo{I} = v$ 
  and $z_{\Cpt{\clo{I}}} = 0$.
  By the above property of $F'_w$, we get that $[F'_w(z)]_\clo{I} 
  =F^\vartriangle(v)=v=z_\clo{I}$. Moreover,
  since $F'_w$ is a payment-free operator, and $z\geq 0$, we 
  get that  $F'_w(z) \geq 0$, so $F'_w(z) \geq z$.
  Hence, $\bar{z} = (F'_w)^\omega(z)$ exists and is a fixed point of $F'_w$, belonging to $[0,1]^n$.
  Again by the above property of $F'_w$, we get that $[(F'_w)^k(z)]_\clo{I} 
  =F^\vartriangle([(F'_w)^{k-1}(z)]_\clo{I} )$ for all $k\geq 1$, so that
  by induction $[(F'_w)^k(z)]_\clo{I}=v$, and $\bar{z}_\clo{I}=v$.
  
  Choose $\varepsilon > 0$ small enough so that $\varepsilon \bar{z}$ is in $\mathcal{V}$ and let $u = w+\varepsilon \, \bar{z}$.
  Then, from~\eqref{eq:ExactFirstOrder}, we get that 
  $F(u)=F(w)+\varepsilon F'_{w}(\bar{z})=w+\varepsilon \bar{z}=u$, where
  we used the fact that $F'_w$ is positively homogeneous.
  Then $u$ is a fixed point of $F$.
  Moreover, by construction $u= w+\varepsilon \, \bar{z}\geq w$
  and $u\geq \epsilon \bar{z}$,
  and since $\argmin w=\clo{I}$ and $\argmin \bar{z}\cap \clo{I}=I$,
  we deduce that $u_I = \zero$ and $u_s > 0$ for every 
  $s \in \Cpt{I}$, that is $\argmin u = I$.
\end{proof}

The previous result together with the observations made before lead
to Algorithm~\ref{algo1} 
below, which solves Problem~\IisMin, as detailed in Theorem~\ref{thm:I=Min}.
There, we are still assuming that for each state $i \in S$ the action spaces $A_i$ and $B_i$ are finite.
Moreover, if $F$ is a payment-free Shapley operator, we write $(\Phi_F, \Phi^\star_F)$ the Galois connection associated to that operator.
\begin{algorithm}[htbp]
  \caption{}
  \label{algo1}
  \begin{algorithmic}[1]
    \REQUIRE $S$, $A_i$, $B_i$, $P$, the corresponding 
    payment-free Shapley operator $F: \R^S \to \R^S$ and $I \subset S$
    \ENSURE answer to Problem~\IisMin
    \IF{ $I=\emptyset$ } \RETURN \FALSE
    \ELSIF{  $I=S$ } \RETURN \TRUE
    \ELSE
      \LOOP
        \IF{ $F^+(\indic_\Cpt{I}) \neq \indic_\Cpt{I}$ \OR $\Phi_F(I) = \emptyset$} \RETURN \FALSE
        \ELSIF{$\Phi^\star_F (\Phi_F(I)) = I$} \RETURN \TRUE
        \ELSE \STATE $A_i \leftarrow A_i^\vartriangle$, $P\leftarrow P^\vartriangle$, $F \leftarrow F^\vartriangle$, $S\leftarrow \Phi^\star_F (\Phi_F(I))$
        \ENDIF
      \ENDLOOP
    \ENDIF
  \end{algorithmic}
\end{algorithm}

\begin{theorem}
  \label{thm:I=Min}
  Algorithm~\ref{algo1} solves Problem~\IisMin\ in $O(n^2 m_2) \leq O(n^3 m^2)$ time.
\end{theorem}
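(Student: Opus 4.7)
The plan is to verify correctness of each branch of the loop using the lemmas already established in this section, then bound the number of loop iterations and the per-iteration cost.

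First I would dispose of the two initial tests: when $I=\emptyset$ no fixed point can have empty argmin so the \textsc{false} output is correct, and when $I=S$ every constant vector is a fixed point with argmin equal to $S$, so \textsc{true} is correct. Inside the loop I would maintain the invariant that (i) $I$ is a non-empty proper subset of the current state space, which I keep denoting by $S$; (ii) the current $F$ is a payment-free Shapley operator on $\R^S$ satisfying Assumption~\ref{StandardAssumptions} (automatic since action spaces are finite); and (iii) Problem~\IisMin\ for the current data is equivalent, via Theorem~\ref{thm:Reduction}, to the original instance. The three guard conditions of the loop body then have clean justifications:
\begin{itemize}
\item If $F^+(\indic_\Cpt{I})\neq\indic_\Cpt{I}$, Lemma~\ref{lem:NecessaryCondition} forbids a fixed point with $\argmin=I$, so \textsc{false} is correct.
\item If $\Phi_F(I)=\emptyset$, then since $I$ is proper and non-empty any fixed point with $\argmin u=I$ would be non-trivial and satisfy $I\subset\argmin u$, which Lemma~\ref{lem:EmptyPhi} rules out, so \textsc{false} is correct.
\item If $\Phi^\star_F(\Phi_F(I))=I$, then $I$ is closed with respect to $(\Phi_F,\Phi^\star_F)$, and the second assertion of Lemma~\ref{lem:ArgmaxPhi} produces the explicit fixed point $F^\omega(\indic_\Cpt{I})$ with $\argmin=I$, so \textsc{true} is correct.
\item Otherwise the three hypotheses of Theorem~\ref{thm:Reduction} are simultaneously satisfied ($I\in\F^-$ proper, $\Phi_F(I)\neq\emptyset$, $I\neq\clo{I}$), and that theorem identifies Problem~\IisMin\ for $F$ with Problem~\IisMin\ for $F^\vartriangle$ on the state space $\clo{I}=\Phi^\star_F(\Phi_F(I))$, which is exactly the recursive update the algorithm performs.
\end{itemize}

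For termination I would argue that each execution of the recursive branch strictly shrinks the state space. Indeed, the remark preceding Theorem~\ref{thm:Reduction} shows that $\Phi_F(I)\neq\emptyset$ forces $\clo{I}\neq S$, so $\clo{I}\subsetneq S$; meanwhile the Galois inequality $I\subset\clo{I}$ combined with the branching hypothesis $I\neq\clo{I}$ keeps $I$ a non-empty proper subset of the new state space, preserving the invariant. The loop therefore executes at most $n$ times.

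For the running time, each iteration performs a constant number of hypergraph-reachability computations plus one pass to build the reduced data. By Proposition~\ref{computephi}, testing $F^+(\indic_\Cpt{I})=\indic_\Cpt{I}$ and evaluating $\Phi^\star_F(\Phi_F(I))$ cost $O(nm_1)$, while evaluating $\Phi_F(I)$ costs $O(nm_2)$. Constructing $A^\vartriangle_i$ and the restricted transition probability $P^\vartriangle$ amounts to scanning each triple $(i,a,b)$ with $i\in\clo{I}$ and testing whether its support lies in $\clo{I}$, which is $O(nm_2)$. Summing $O(nm_2)$ over the at most $n$ iterations gives $O(n^2m_2)$, and $m_2\leq nm^2$ yields the announced $O(n^3m^2)$ bound. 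The main subtlety I anticipate is the inductive bookkeeping across reductions: one must check at each stage that $F^\vartriangle$ really is a payment-free Shapley operator on $\R^{\clo{I}}$, so that Proposition~\ref{computephi} and the whole apparatus of Section~\ref{galois-section} apply to it. This is guaranteed by the definition of $F^\vartriangle$ given before Theorem~\ref{thm:Reduction}, since $\clo{I}\in\F^-$ ensures that each $A^\vartriangle_i$ is non-empty and that $(P^{ab}_i)_{\clo{I}}$ is a probability vector on $\clo{I}$ for all admissible $(a,b)$.
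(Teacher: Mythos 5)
Your proposal is correct and follows the same route as the paper's proof: correctness of each branch via Lemma~\ref{lem:NecessaryCondition}, Lemma~\ref{lem:EmptyPhi}, Lemma~\ref{lem:ArgmaxPhi} and Theorem~\ref{thm:Reduction}, termination in at most $n$ iterations because $\clo{I}\neq S$ forces the reduced state space to shrink strictly, and the $O(nm_2)$ per-iteration cost from Proposition~\ref{computephi}. You simply spell out the bookkeeping (the loop invariant and the legitimacy of $F^\vartriangle$ as a payment-free Shapley operator) that the paper leaves implicit.
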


\begin{proof}
  The fact that Algorithm~\ref{algo1} provides the right answer is a direct consequence of Lemma~\ref{lem:NecessaryCondition}, Lemma~\ref{lem:EmptyPhi}, Lemma~\ref{lem:ArgmaxPhi} and Theorem~\ref{thm:Reduction}.
  
  We next show that it stops after at most $n$ iterations of the loop.
  Suppose that during the execution of a loop, the first two conditions (which are stopping criteria) are not satisfied.
  Then the closure of $I$ with respect to the Galois connection $(\Phi_F, \Phi^\star_F)$ associated with $F$ is a proper subset of states.
  Hence, the cardinality of the state space for
  the reduced operator $F^\vartriangle$
  is strictly less than the one of $F$.
  
  Moreover, each operation in the loop requires at most $O(n m_2) \leq O(n^2 m^2)$ time (see Proposition~\ref{computephi}).
\end{proof}

\subsection{Mixed problem}

So far, we have only considered the problem with a single constraint on the fixed point, concerning the indices of the minimal entries.
The dual problem, concerning the maximal entries of fixed points, is equivalent.
We address now a mixed-condition problem.
\begin{problem}[{\bf IMinJMax}]
  \label{pb:IMinJmax}
  Let $I$ and $J$ be nonempty disjoint subsets of $S$.
  Does a given payment-free Shapley operator with finite action spaces have a fixed point $u$ satisfying $I = \argmin u$ and $J = \argmax u$?
\end{problem}

Let $F$ be a payment-free Shapley operator with finite action spaces and let $I,J$ be two nonempty disjoint subsets of $S$.
We already know from Lemma~\ref{lem:NecessaryCondition} and its dual formulation that $F^+(\indic_\Cpt{I})=\indic_\Cpt{I}$ and $F^-(\indic_J)=\indic_J$ are necessary conditions to have a positive answer to problem~\IMinJMax.
The following theorem shows that the two constraints can be treated separately.
\begin{theorem}
  \label{thm:IMinJMax}
  Let $F$ be the payment-free Shapley operator associated with actions spaces $A_i$ and $B_i$ of the two players, and a transition probability $P$.
  Let $I \in \F^-$ and $J \in \F^+$ be two nonempty disjoint subsets.
  Then $F$ has a fixed point $u$ satisfying $I = \argmin u$ and $J = \argmax u$ if and only if $F$ has fixed points $v, w$ satisfying $\argmin v = I$ and $\argmax w = J$.
\end{theorem}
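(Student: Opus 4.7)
The forward implication is trivial: any fixed point $u$ with $\argmin u = I$ and $\argmax u = J$ serves simultaneously as $v$ and $w$. For the converse, my plan is to construct $u$ by sandwiching two monotone iterations of $F$ between suitably perturbed copies of $v$ and $w$. Using the additive and positive homogeneity of $F$, I would first renormalize so that $v \in [0,1]^n$ with $v_I = 0$ and $v_s > 0$ for $s \notin I$, and $w \in [0,1]^n$ with $w_J = 1$ and $w_s < 1$ for $s \notin J$; both remain fixed points of $F$.

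The target set for $u$ is the slab
\[
\Omega := \{ x \in [0,1]^n \mid x_I = 0,\ x_J = 1\}.
\]
Since $I \in \F^-$ and $J \in \F^+$, monotonicity of $F$ together with the invariance of the hypercube $[0,1]^n$ implies $\Omega$ is $F$-invariant: for $x \in \Omega$ we have $\indic_J \leq x \leq \indic_{S\setminus I}$, hence $\indic_J \leq F(\indic_J) \leq F(x) \leq F(\indic_{S\setminus I}) \leq \indic_{S\setminus I}$. Fixing any $\alpha \in (0,1/2]$, I introduce the two book-ends
\[
x_0 := \indic_J \vee (\alpha v), \qquad y_0 := \indic_{S\setminus I} \wedge \bigl(\alpha w + (1-\alpha)\unit\bigr) .
\]
An entrywise check would show $x_0,y_0 \in \Omega$ and $x_0 \leq y_0$; the condition $\alpha \leq 1/2$ is used only at coordinates outside $I \cup J$, where $x_{0,s} = \alpha v_s \leq \alpha$ and $y_{0,s} = \alpha w_s + 1-\alpha \geq 1-\alpha$. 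Because $\alpha v$ and $\alpha w + (1-\alpha)\unit$ are themselves fixed points of $F$ (by positive and additive homogeneity applied to $F(v) = v$ and $F(w) = w$), the monotonicity of $F$ combined with $F(\indic_J) \geq \indic_J$ and $F(\indic_{S\setminus I}) \leq \indic_{S\setminus I}$ gives $F(x_0) \geq x_0$ and $F(y_0) \leq y_0$. Thus the sequences $(F^k(x_0))_k$ and $(F^k(y_0))_k$ are monotone and trapped in $\Omega$, so they converge to fixed points $u_\flat := F^\omega(x_0)$ and $u_\sharp := F^\omega(y_0)$ in $\Omega$, with $u_\flat \leq u_\sharp$ by monotonicity of $F$.

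I would then conclude by setting $u := u_\flat$. Membership in $\Omega$ gives $u_I = 0$ and $u_J = 1$. From $u \geq x_0 \geq \alpha v$ and $v_s > 0$ off $I$, we get $u_s > 0$ for all $s \notin I$, so $\argmin u = I$. From $u \leq u_\sharp \leq y_0$ and $y_{0,s} < 1$ off $J$ (itself inherited from $w_s < 1$ off $J$), we get $u_s < 1$ for $s \notin J$, so $\argmax u = J$.

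The delicate point will be the design of the book-ends: neither $v$ nor $w$ alone lies in $\Omega$, and one must perturb them so that simultaneously (i) each belongs to $\Omega$, (ii) $x_0$ is sub-harmonic and $y_0$ super-harmonic for $F$, and (iii) $x_0 \leq y_0$. The homogeneity properties of $F$ are essential in (ii), since they are what keeps $\alpha v$ and $\alpha w + (1-\alpha)\unit$ as fixed points; once (i)--(iii) are secured, the monotone convergence does the rest.
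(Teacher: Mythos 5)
Your argument is correct and is essentially the paper's proof: your book-ends $x_0=\indic_J\vee(\alpha v)$ and $y_0=\indic_{S\setminus I}\wedge(\alpha w+(1-\alpha)\unit)$ are, for $\alpha=1/2$, exactly the endpoints of the order interval $\mathcal L=\{z \mid v\vee\indic_J\le z\le w\wedge\indic_{S\setminus I}\}$ that the paper uses after normalizing $\max v=\min w=1/2$, and the key inequalities $F(x_0)\ge x_0$, $F(y_0)\le y_0$ are the same two lattice estimates derived from $J\in\F^+$, $I\in\F^-$ and homogeneity. The only difference is cosmetic: the paper concludes by Tarski's fixed point theorem on the invariant complete lattice $\mathcal L$, whereas you extract the fixed point constructively as the monotone limit $F^\omega(x_0)$ (using nonexpansiveness for continuity), which is equally valid.
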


\begin{proof}
  We only  need to prove the ``if'' part of the theorem.
  Suppose that $F$ has fixed points $v, w$ satisfying $\argmin v = I$ and $\argmax w = J$.
  Then, we may impose $\min_{i \in S} v_i = 0$, $\max_{i \in S} v_i = \min_{i \in S} w_i = 1/2$ and $\max_{i \in S} w_i = 1$.
  
  Let $\mathcal L=\{z \in \R^n \mid v \vee \indic_J \leq z \leq w \wedge \indic_\Cpt{I} \}$.
  Put in words, $\mathcal L$ is the set of all elements in $[0,1]^n$ whose entries are $0$ on $I$, $1$ on $J$ and comprised between those of $v$ and $w$ elsewhere.
  In particular, the entries outside $I$ or $J$ of the elements in $\mathcal L$ are in $(0,1)$.
  
  The set $\mathcal L$ is a complete lattice.
  Since $J \in \F^+$, we have $v \vee \indic_J \leq F(v) \vee F(\indic_J) \leq F(v \vee \indic_J)$.
  Since $I \in \F^-$, we have $w \wedge \indic_\Cpt{I} \geq F(w) \wedge F(\indic_\Cpt{I}) \geq F(w \wedge \indic_\Cpt{I})$.
  Hence, $v \vee \indic_J \leq z \leq w \wedge \indic_\Cpt{I} $ implies 
  $v \vee \indic_J \leq F(v \vee \indic_J)\leq F( z) \leq F(w \wedge \indic_\Cpt{I})\leq w \wedge \indic_\Cpt{I} $, which shows that $\mathcal L$ is invariant by $F$.
  As $F$ is order-preserving, Tarski's fixed point theorem guarantees the existence of a fixed point of $F$ in $\mathcal L$.
\end{proof}

\begin{corollary}
  \label{coro:IMinJMax}
  Problem~\IMinJMax\ can be solved in $O(n^2 m_2) \leq O(n^3 m^2)$ time.
\end{corollary}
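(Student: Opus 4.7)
The plan is to reduce Problem~\IMinJMax\ to two independent instances of Problem~\IisMin\ by invoking Theorem~\ref{thm:IMinJMax}, and then to apply the complexity bound of Theorem~\ref{thm:I=Min} twice.

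First, I would carry out the preliminary membership tests: check whether $I \in \F^-$ and $J \in \F^+$, which by Proposition~\ref{computephi} can be done in $O(nm_1)$ and $O(nm_2)$ time respectively (nonemptiness and disjointness of $I$ and $J$ are part of the input specification of the problem). If either test fails, Lemma~\ref{lem:NontrivialFP} implies at once that no fixed point $u$ with $\argmin u = I$ and $\argmax u = J$ can exist, so the algorithm returns \textsc{no}.

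Next, by Theorem~\ref{thm:IMinJMax}, it suffices to decide two independent subquestions: does $F$ admit a fixed point $v$ with $\argmin v = I$, and does $F$ admit a fixed point $w$ with $\argmax w = J$? The first is exactly Problem~\IisMin\ for the pair $(F,I)$, and Theorem~\ref{thm:I=Min} asserts that Algorithm~\ref{algo1} solves it in $O(n^2m_2)$ time. For the second, I would pass to the conjugate payment-free Shapley operator $\widetilde F(x) := -F(-x)$; it has the same state space and the same action-space size parameters $n, m_1, m_2$ as $F$ (with the roles of \MIN\ and \MAX\ swapped). The involution $u \mapsto \unit - u$ sends fixed points of $F$ to fixed points of $\widetilde F$ and exchanges $\argmin$ with $\argmax$, so the second subquestion is equivalent to Problem~\IisMin\ applied to $(\widetilde F, J)$, which is again solved in $O(n^2m_2)$ time by Algorithm~\ref{algo1}.

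Summing the preliminary checks and the two invocations of Algorithm~\ref{algo1} gives the overall bound $O(n^2m_2) \leq O(n^3m^2)$. The main conceptual obstacle is not algorithmic: it was already overcome in Theorem~\ref{thm:IMinJMax}, whose lattice-invariance argument via Tarski's theorem allowed the $\argmin$ and $\argmax$ constraints to be decoupled. Once that factorization is in hand, the corollary is essentially a double call to the procedure of Theorem~\ref{thm:I=Min}, applied to $F$ and to its conjugate $\widetilde F$.
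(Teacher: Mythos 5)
Your proposal is correct and follows essentially the same route as the paper: invoke Theorem~\ref{thm:IMinJMax} to decouple the two constraints, then solve two instances of Problem~\IisMin, one for $(F,I)$ and one for the conjugate operator $(\widetilde F, J)$, each in $O(n^2 m_2)$ time by Theorem~\ref{thm:I=Min}. The preliminary membership tests you add are harmless but redundant, since Algorithm~\ref{algo1} already rejects inputs with $I \notin \F^-$ (and dually for $J$).
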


\begin{proof}
  According to Theorem~\ref{thm:IMinJMax}, Problem~\IMinJMax\ can be solved by two instances of Problem~\IisMin, one with inputs $F$ and $I$, one with inputs $\widetilde F$ and $J$.
\end{proof}

\subsection{Summary of complexity results}

The following table summarizes the results of this section.
\begin{center}
  \begin{tabular}{l|l}
    Problem & Complexity class\\
    \hline
    \MonBool & NP-complete (\cite{YZ04})\\
    \hline
    \NonTrivialFP & NP-complete (Corollary~\ref{thm:NonTrivialFP})\\
    \hline
    \IisMin & P (Theorem~\ref{thm:I=Min})\\
    \hline
    \IMinJMax & P (Corollary~\ref{coro:IMinJMax})\\
  \end{tabular}
\end{center}

\section{Example}
\label{ex-section}

\subsection{Checking ergodicity}

We consider the game with perfect information defined by the graph represented in Figure~\ref{fig:ExampleFullGraph}.
There are four states represented by gray nodes.
A token is initially placed in one of these nodes.
At each stage, the token is moved along the edges of the graph until it reaches another state, according to the following rule: player \MIN\ moves the token at circle nodes, player \MAX\ at square ones and at the diamond nodes, an edge is selected at random according to the probabilities indicated on the edges
starting from the node.
A payment occurs only for the edges starting from a \MAX\ node (its value is given by the label attached to such edges).

\begin{figure}[htbp]
  \begin{center}
    \begin{tikzpicture}
      [node distance=1.6cm, on grid, bend angle=20, auto,
        min/.style={circle,draw,minimum size=7mm,fill=black!15},
        max/.style={rectangle,draw,minimum size=5mm},
        nature/.style={diamond,draw,inner sep=0pt,minimum size=6mm},
        tail/.style={->,shorten >=3pt,>=angle 60}, head/.style={<-,shorten <=3pt,>=angle 60}]
      \node[min] (min1) at (0,0) {\large \bf 1};
      \node[max] (max1) [left=of min1, xshift=-0.8cm] {}
        edge [head] (min1);
      \node[nature] (n1) [below=of max1] {}
        edge [tail] node[midway,sloped,above]{\scriptsize $1/2$} (min1)
        edge [head] node[left,\colorpayment]{$1$} (max1);
      \node[max] (max2) [right=of n1] {}
        edge [tail] node[below,\colorpayment]{$-2$} (n1)
        edge [tail] node[left,\colorpayment]{$-1$} (min1);
      \node[min] (min2) [below=of max2, xshift=0.8cm] {\large \bf 2}
        edge [head, bend left] node[midway,sloped,below]{\scriptsize $1/2$} (n1)
        edge [tail] (max2);
      \node[max] (max3) [right=of max2] {}
        edge [head, bend left] (min1)
        edge [tail, bend right] node[right,\colorpayment]{$2$} (min1);
      \node[max] (max4) [right=of max3] {}
        edge [head] (min2);
      \node[nature] (n2) [above=of max4] {}
        edge [head] node[left,\colorpayment]{$2$} (max4)
        edge [tail] node[midway,above]{\scriptsize $1/2$} (min1);
      \node[max] (max5) [right=of max4] {}
        edge [tail] node[below,,xshift=-0.2cm,\colorpayment]{$-3$} (n2);
      \node[min] (min3) [right=of n2, xshift=0.8cm] {\large \bf 3}
        edge [tail] (max5)
        edge [head] node[midway,above]{\scriptsize $1/2$} (n2);
      \node[max] (max6) [right=of max5] {};
      \node[nature] (n3) [right=of max6] {}
        edge [head] node[below,\colorpayment]{$2$} (max6)
        edge [tail, bend right] node[midway,sloped,above]{\scriptsize $1/2$} (min3);
      \node[min] (min4) [below=of max5, xshift=0.8cm] {\large \bf 4}
        edge [head, bend left] node[left,\colorpayment]{$-2$} (max6)
        edge [tail, bend right] (max6)
        edge [head, bend right] node[midway,sloped,below]{\scriptsize $1/2$} (n3);
      \node[nature] (n4) [below=of max4] {}
        edge [tail] node[midway,below]{\scriptsize $1/2$} (min2)
        edge [tail] node[midway,below]{\scriptsize $1/2$} (min4)
        edge [head] node[above,xshift=-0.2cm,\colorpayment]{$-1$} (max5);
    \end{tikzpicture}
  \end{center}
  \caption{}
  \label{fig:ExampleFullGraph}
\end{figure}

The Shapley operator of this game is
\begin{equation*}
  T(x) =
  \begin{pmatrix}
    {\color{\colorpayment} 2} + x_1 \, \wedge \, {\color{\colorpayment} 1} + \frac{1}{2} (x_1 + x_2)\\
    \big( {\color{\colorpayment} -2} + \frac{1}{2} (x_1 + x_2) \vee {\color{\colorpayment} -1} + x_1 \big) \, \wedge \, {\color{\colorpayment} 2} + \frac{1}{2} (x_1 + x_3)\\
    {\color{\colorpayment} -3} + \frac{1}{2} (x_1 + x_3) \, \vee \, {\color{\colorpayment} -1} + \frac{1}{2} (x_2 + x_4)\\
    {\color{\colorpayment} -2} + x_4 \, \vee \, {\color{\colorpayment} 2} + \frac{1}{2} (x_3 + x_4)
  \end{pmatrix} \enspace .
\end{equation*}
It can be shown that $T$ verifies the ergodic equation~\eqref{eq:Ergodic}
with ergodic constant $\lambda = 1/3$ and $u=(4/3,0,2/3,4)^\mathsf{T}$.
Let us check whether this game is ergodic,
or equivalently, whether the recession function of $T$, denoted by $F$ and given by
\begin{equation}
  \label{eq:RunningEx}
  F(x) =
  \begin{pmatrix}
    x_1 \, \wedge \, \frac{1}{2} (x_1 + x_2)\\
    \big(\frac{1}{2} (x_1 + x_2) \vee x_1 \big) \, \wedge \, \frac{1}{2} (x_1 + x_3)\\
    \frac{1}{2} (x_1 + x_3) \, \vee \, \frac{1}{2} (x_2 + x_4)\\
    x_4 \, \vee \, \frac{1}{2} (x_3 + x_4)
  \end{pmatrix} \enspace ,
\end{equation}
has only trivial fixed points.

To answer these questions, we need to construct the Galois connection induced by the game.
Firstly, we check that
\begin{align*}
  \F^- &= \big\{ \emptyset, \{1\}, \{1,2\}, \{1,2,3,4\} \big\} \enspace ,\\
  \F^+ &= \big\{ \emptyset, \{4\}, \{1,2,3,4\} \big\} \enspace .
\end{align*}
This can be seen on the graph represented in Figure~\ref{fig:ExampleFullGraph}.
Indeed, following the game-theoretic interpretation (Proposition~\ref{prop:GameInterpretation}), we observe that player \MIN\ can always make sure that the state remains in $\{1\}$ or in $\{1,2\}$, and that player \MAX\ can always make sure that it stays in $\{4\}$.
Alternatively, we can construct the Boolean abstractions of $F$, namely
\begin{equation*}
  F^+(x) =
  \begin{pmatrix}
    x_1\\
    x_1 \, \vee \, (x_2 \wedge x_3)\\
    x_1 \vee x_3 \vee x_2 \vee x_4\\
    x_3 \vee x_4
  \end{pmatrix} \quad \text{and} \quad
  F^-(x) =
  \begin{pmatrix}
    x_1 \wedge x_2\\
    x_1 \wedge x_3\\
    (x_1 \wedge x_3) \vee (x_2 \wedge x_4)\\
    x_4
  \end{pmatrix} \enspace ,
\end{equation*}
and check that
\[
F^+(\indic_{\{2,3,4\}}) \leq \indic_{\{2,3,4\}}, \quad F^+(\indic_{\{3,4\}}) \leq \indic_{\{3,4\}} \quad \text{and} \quad F^-(\indic_{\{4\}}) \geq \indic_{\{4\}} \enspace .
\]

By definition of the Galois connection, or using its characterization by the Boolean operators, we get that
\begin{align*}
  &\Phi(\{1\}) = \Phi(\{1,2\}) = \{4\} \enspace ,\\
  &\Phi^\star(\{4\}) = \{1,2\} \enspace .
\end{align*}

We can thus conclude by Theorem~\ref{thm:Galois} and Theorem~\ref{th-game-ergodicity-full} that the game is not ergodic. 

\subsection{Finding a fixed point with prescribed argmin}

We now address the problem of finding fixed points of $F$ with fixed $\argmin$.
Since
$\indic_{\{2,3,4\}}$ and 
$\indic_{\{3,4\}}$ are the only nontrivial fixed points of $F^+$,
we know from Lemma~\ref{lem:NecessaryCondition} that $\{1\}$ and $\{1,2\}$ are the only possible candidates for nontrivial $\argmin$.

The set $\{1,2\}$ is closed with respect to the Galois connection.
Thus, according to Lemma~\ref{lem:ArgmaxPhi}, $F$ has a fixed point whose $\argmin$ is $\{1,2\}$.
Moreover, its $\argmax$ can only be $\{4\}$.
We can check that the vector $(0,0,1/2,1)^\mathsf{T}$ is a fixed point with these properties.
	
As for the set $\{1\}$, we cannot conclude directly from Lemma~\ref{lem:EmptyPhi} or Lemma~\ref{lem:ArgmaxPhi}.
According to Theorem~\ref{thm:Reduction}, we need to construct a reduced operator, $F^\vartriangle$, defined on $\R^{\{1,2\}}$ ($\{1,2\}$ being the closure of $\{1\}$):
\[
F^\vartriangle(x) =
\begin{pmatrix}
  x_1 \, \wedge \, \frac{1}{2} (x_1 + x_2)\\
  x_1 \, \vee \, \frac{1}{2} (x_1 + x_2)
\end{pmatrix} \enspace .
\]
The directed graph associated with this operator is represented in Figure~\ref{fig:ExampleReducedGraph}.
\begin{figure}[htbp]
  \begin{center}
    \begin{tikzpicture}
      [node distance=1.6cm, on grid, bend angle=20, auto,
        min/.style={circle,draw,minimum size=7mm,fill=black!15},
        max/.style={rectangle,draw,minimum size=5mm},
        nature/.style={diamond,draw,inner sep=0pt,minimum size=6mm},
        tail/.style={->,shorten >=3pt,>=angle 60}, head/.style={<-,shorten <=3pt,>=angle 60}]
      \node[min] (min1) at (0,0) {\large \bf 1};
      \node[max] (max1) [left=of min1, xshift=-0.8cm] {}
        edge [head] (min1);
      \node[nature] (n1) [below=of max1] {}
        edge [tail] node[midway,sloped,above]{\scriptsize $1/2$} (min1)
        edge [head] (max1);
      \node[max] (max2) [right=of n1] {}
        edge [tail] (n1)
        edge [tail] (min1);
      \node[min] (min2) [below=of max2, xshift=0.8cm] {\large \bf 2}
        edge [head, bend left] node[midway,sloped,below]{\scriptsize $1/2$} (n1)
        edge [tail] (max2);
      \node[max] (max3) [right=of max2] {}
        edge [head, bend left] (min1)
        edge [tail, bend right]  (min1);
    \end{tikzpicture}
  \end{center}
  \caption{}
  \label{fig:ExampleReducedGraph}
\end{figure}

We check that for this reduced operator we have
\begin{align*}
  \F^- &= \big\{ \emptyset, \{1\}, \{1,2\} \big\} \enspace ,\\
  \F^+ &= \big\{ \emptyset, \{1,2\} \big\} \enspace .
\end{align*}
Hence, $\Phi(\{1\}) = \emptyset$ and by Lemma~\ref{lem:EmptyPhi}, we know that $F^\vartriangle$ has no fixed point whose $\argmin$ is $\{1\}$.
According to Theorem~\ref{thm:Reduction}, the same holds for $F$.

We conclude that any nontrivial fixed point $u$ of $F$ verifies $u_1 = u_2 < u_3 < u_4$.
Furthermore, from~\eqref{eq:RunningEx} we readily get that $u_3 = \frac{1}{2}(u_2+u_4)$.
These conditions are also sufficient for a point to be a nontrivial fixed point of $F$.
As a consequence, assuming that in Figure~\ref{fig:ExampleFullGraph} the value of the payments can change, all the realizable mean payoff vectors $\chi$ are characterized by
\[ \chi_1 = \chi_2 \leq \chi_4, \quad \chi_3 = \frac{1}{2} (\chi_1 + \chi_4) \enspace . \]

\section{Summary and discussion of the main results}

It is convenient to give here a synthetic description of our results.
We use the notations and definitions of the previous sections, 
in particular the definition of the Galois connection and of the hypergraph
given in Section~\ref{galois-section} and \ref{compact-section}
respectively, and 
and the definition of Assumption~\ref{StandardAssumptions} given in
Section~\ref{compact-section}.
Combining Theorems~\ref{th-game-ergodicity-full},  \ref{thm:Galois} and \ref{th-conjugates}, and
Corollary~\ref{coro:StructuralFixedPoints},
we obtain the following result, which shows 
that most of the classical characterizations of ergodicity for finite state Markov chains, listed in Theorem~\ref{thm:ErgodicityMarkovChain}, carry over to the two-player case, up to the essential discrepancy that the directed graph of the transition probability matrix is now replaced by a pair of directed hypergraphs depending on the transition probability.

\begin{theorem}[Ergodicity of zero-sum games]
  \label{th-game-ergodicity}
  Let us fix a state space $S=[n]$, and the nonempty actions
  spaces $A_i$ and $B_i$ of the two players.
  Let $r$ be a bounded transition payment, let $P$ be a transition probability,
  and let  $T=T(r,P)$ be the Shapley operator of the game $\Gamma(r,P)$.
  Then, the following properties are equivalent:
  \begin{enumerate}
    \item\label{game-ergodic-1} the recession function $\widehat{T}=T(0,P)$ has only trivial fixed points;
    \item\label{game-ergodic-2} the mean payoff vector of the game $\Gamma(r+g,P)$ does exist and is constant for all additive perturbations $g$ of the transition payment, depending only of the state (so $g_i^{a,b}=g_i$, for all $i\in S$, $a\in A_i$ and $b\in B_i$);
    \item\label{game-ergodic-3} the ergodic equation $g + T(u)=\lambda \unit + u$ is solvable 
      for all vectors $g\in \R^n$;
      \item\label{game-ergodic-4} there does not exist a pair of conjugate subsets of states with respect to the Galois connection $(\Phi,\Phi^\star)$ associated with the recession function $\widehat{T}=T(0,P)$.
  \end{enumerate}
  Assume in addition that Assumption~\ref{StandardAssumptions} holds.
  Then, the preceding conditions are equivalent to the following one:
  \begin{enumerate}
    \setcounter{enumi}{4}
    \item \label{eq-hypergraph}
    there does not exist a pair of conjugate subsets of states
    with respect to the hypergraphs $(\bar{G}^+,\bar{G}^-)$ associated with the transition probability $P$.
  \end{enumerate}
In particular (still making Assumption~\ref{StandardAssumptions}), 
the ergodicity property of a game $\Gamma(r,P)$ only depends on the support of $P$.
\end{theorem}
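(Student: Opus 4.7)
My plan is to assemble the theorem as a direct consequence of the results already proved, since each of the implications has in fact been established elsewhere; the work consists in matching the five conditions listed here with their counterparts in the earlier statements and taking negations carefully.

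First, I would dispatch the equivalences \pref{game-ergodic-1}$\Leftrightarrow$\pref{game-ergodic-2}$\Leftrightarrow$\pref{game-ergodic-3}. These are precisely conditions \pref{game-ergodic-10}, \pref{game-ergodic-20} and \pref{game-ergodic-30} of Theorem~\ref{th-game-ergodicity-full} (the two stronger equivalent conditions \pref{game-ergodic-2p} and \pref{game-ergodic-3p} of that theorem are subsumed and need not be mentioned here). Since Theorem~\ref{th-game-ergodicity-full} is proved without any compactness hypothesis, no extra assumption is required at this stage.

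Next, I would establish \pref{game-ergodic-1}$\Leftrightarrow$\pref{game-ergodic-4}. Applied to the payment-free Shapley operator $F=\widehat{T}=T(0,P)$, Theorem~\ref{thm:Galois} asserts that $\widehat{T}$ has a \emph{nontrivial} fixed point if and only if there exists a pair of conjugate subsets of states with respect to the Galois connection $(\Phi,\Phi^\star)$ (this is the equivalence \pref{i-thm-Galois}$\Leftrightarrow$\pref{vi-thm-Galois} of that theorem). Taking contrapositives on both sides yields exactly the equivalence between ``only trivial fixed points'' and the nonexistence of such a conjugate pair, as required.

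Then, under Assumption~\ref{StandardAssumptions}, I would obtain \pref{game-ergodic-1}$\Leftrightarrow$\pref{eq-hypergraph} in the same way from Theorem~\ref{th-conjugates}. Indeed, the final clause of that theorem guarantees that, under Assumption~\ref{StandardAssumptions}, a pair $(I,J)$ is conjugate with respect to $(\Phi,\Phi^\star)$ if and only if it is conjugate with respect to $(\bar G^+,\bar G^-)$, so the negation of Theorem~\ref{th-conjugates} gives the desired equivalence between the absence of nontrivial fixed points of $\widehat{T}$ and the absence of a conjugate pair in the hypergraphs.

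Finally, for the last sentence of the theorem, I would simply invoke Corollary~\ref{coro:StructuralFixedPoints}, which says precisely that, under Assumption~\ref{StandardAssumptions}, the property ``$\widehat{T}$ has only trivial fixed points'' depends only on the support of $P$; combined with the already proved equivalence of \pref{game-ergodic-1} with the other four conditions, this yields the announced structural property. The only subtle point to check is that the input data appearing in each cited result match the present framework, in particular that the boundedness of $r$ guarantees the existence of the recession function $\widehat{T}$ and that the payment-free operator $F=T(0,P)$ of Theorems~\ref{thm:Galois}, \ref{th-conjugates} and Corollary~\ref{coro:StructuralFixedPoints} is exactly this $\widehat{T}$; these are routine verifications and I do not anticipate any genuine obstacle.
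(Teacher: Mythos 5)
Your proposal is correct and follows exactly the route the paper takes: the theorem is assembled by combining Theorem~\ref{th-game-ergodicity-full} (for the equivalence of the first three conditions), the contrapositive of Theorem~\ref{thm:Galois} applied to $F=\widehat{T}$ (for condition~\pref{game-ergodic-4}), Theorem~\ref{th-conjugates} under Assumption~\ref{StandardAssumptions} (for condition~\pref{eq-hypergraph}), and Corollary~\ref{coro:StructuralFixedPoints} for the final structural remark. The bookkeeping of negations and the matching of $F=T(0,P)$ with $\widehat{T}$ are handled correctly, so nothing is missing.
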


The classical theory of additive functionals of the trajectory of Markov chains corresponds to the zero-player case of zero-sum game theory, or equivalently to the case where each player has only one possible action in each state.
By applying the above theorem to the degenerate Shapley operator
\[ T(x) = g + Px \]
with recession function
\[ \hat{T}(x)  = P x \enspace , \]
the characterizations \pref{it-1}--\pref{prop-ergodic-graph} of the ergodicity of a Markov chain with transition probability matrix $P$,
listed in Theorem~\ref{thm:ErgodicityMarkovChain}, are readily recovered,
see in particular Remark~\ref{remark-hypergraph-graph} for the 
characterization~\pref{prop-ergodic-graph}
(we exclude the characterization of Point~\pref{it-last}
of Theorem~\ref{thm:ErgodicityMarkovChain}, in terms of the uniqueness of the invariant measure, which has no nonlinear analogue).

When the action spaces are finite, an algorithmic issue is to check ergodicity.
We noted that a result of Yang and Zhao~\cite{YZ04} implies
that checking the non-ergodicity is NP-hard, and proved that this problem
is NP-complete (Corollary~\ref{thm:NonTrivialFP}) but fixed parameter tractable (Theorem~\ref{th-fptrac}).

As a refinement of the present ergodicity results we have considered the problem of characterizing the fixed point set $\mathcal{W} := \{ w \in \R^n \mid F(w)=w \}$ of a payment-free Shapley operator $F$.
This problem has been well studied in the one-player case.
In particular, when the action spaces are finite,
$\mathcal{W}$ is known to be sup-norm isometric
to a polyhedral cone with 
a well characterized dimension~\cite{AG03}.
In the two-player case, the properties of the fixed point set
$\mathcal{W}$ are less understood.
In order to get information on this set, we have considered in particular the 
problem of the existence of a fixed point $w$ of $F$ such that $w_i$ is minimal precisely when $i$ belongs to a prescribed subset $I\subset S$.
We showed in Theorem~\ref{thm:I=Min} that this problem can be solved in polynomial time, by  Algorithm~\ref{algo1}. 
We also showed that we can check whether $F$ has a fixed
point with prescribed argmin and argmax in polynomial time. 

Such results deal with 
the ``order abstraction'' of the fixed point set 
of $F$. A natural
refinement would be to ask whether, for a given partition
$I_1\cup\dots\cup I_k$ of the state space $S$, 
there is a fixed point $w$ of $F$ such that
\[
w_i = w_j ,\; \forall i,j\in I_m, \forall m\in[k], \quad\text{and}\quad
w_{i_1}<w_{i_2}<\dots<w_{i_k},\; \forall i_1 \in I_1,\dots,i_k\in I_k \enspace .
\]
We do not know whether this can be checked in polynomial time
for any $k\geq 3$.

\def\cprime{$'$}
\providecommand{\href}[2]{#2}
\providecommand{\arxiv}[1]{\href{http://arxiv.org/abs/#1}{arXiv:#1}}
\providecommand{\url}[1]{\texttt{#1}}
\providecommand{\urlprefix}{URL }

\end{document}